\documentclass[notitlepage]{scrartcl}
\usepackage[shortlabels]{enumitem}
\usepackage{amsmath}
\usepackage{amssymb}
\usepackage{amsthm}
\usepackage{abstract}
\usepackage{xcolor}
\usepackage{comment}
\usepackage{mathtools}
\usepackage{graphicx}
\usepackage{caption}
\usepackage{subcaption}
\usepackage{float}
\usepackage{hyperref}
\usepackage{cleveref}
\usepackage{tikz}
\hypersetup{
    colorlinks=true,
    linkcolor=blue,
    filecolor=magenta,      
    urlcolor=cyan
    }
\makeatletter
\newcommand*{\rom}[1]{\expandafter\@slowromancap\romannumeral #1@}
\newcommand*{\barfix}[2][.175ex]{%
  \mathpalette{\@barfix{#1}}{#2}%
}
\newcommand*{\@barfix}[3]{%
  \vbox{%
    \kern#1\relax
    \hbox{$#2#3\m@th$}%
  }%
}
\makeatother

\newtheorem{theorem}{Theorem}
\newtheorem{thm}{Theorem}[section]
\newtheorem{corollary}[thm]{Corollary}
\newtheorem{lemma}[thm]{Lemma}

\newtheorem{claim}[thm]{Claim}

\newtheorem{conjecture}[thm]{Conjecture}

\usepackage[margin=1in]{geometry} 

\providecommand\given{\nonscript\:\ifthenelse{\equal{\delimsize}{}}{\big\vert}{\delimsize\vert}\nonscript\:\mathopen{}}
\let\Pr\undefined
\DeclarePairedDelimiterXPP\Pr[1]{\mathbb{P}}(){}{#1}
\DeclarePairedDelimiterXPP\Ex[1]{\mathbb{E}}{[}{]}{}{#1}

\title{\vspace{-1.5cm}Supercritical Site Percolation on Regular Graphs}
\author{Sahar Diskin \footnote{Department of Mathematics, ETH Z\"urich. Email: \href{mailto:Sahardiskinmail@gmail.com}{Sahardiskinmail@gmail.com}.} \and 
Michael Krivelevich \footnote{School of Mathematical Sciences, Tel Aviv University, Tel Aviv 6997801, Israel. \newline Email:
\href{mailto:krivelev@tauex.tau.ac.il}{krivelev@tauex.tau.ac.il}. Research supported in part by NSF-BSF grant 2023688.} \and 
Itay Markbreit \footnote{School of Mathematical Sciences, Tel Aviv University, Tel Aviv 6997801, Israel. \newline Email:
\href{mailto:markbreit@mail.tau.ac.il}{markbreit@mail.tau.ac.il}.}}

\begin{document}
\date{}
\maketitle

\begin{abstract}
    We consider site (vertex) percolation on $d$-regular graphs, for both constant-degree and growing-degree cases. We give sufficient, and relatively tight, conditions for the emergence of the ``Erd\H os-R\'enyi component phenomenon" in the supercritical regime $p=\frac{1+\epsilon}{d-1}$: namely, the appearance of a unique giant component of order $n/d$ in the percolated subgraph, with all other components being of size $O(\log n)$. Our main results apply both to the $d$-dimensional hypercube and to pseudo-random graphs, and resolve two open questions in these cases. We further discuss differences (and similarities) between bond (edge) percolation setting and site percolation setting.
\end{abstract}

\section{Introduction and Main Results}\label{intro}

Given a host graph $G=(V,E)$ and probability $p\in [0,1]$, in \textit{$p$-site-percolation} we form a random subset $V_p \subseteq V$ by including each vertex $v \in V$ in $V_p$ independently and with probability $p$, and consider the subgraph $G[V_p]$.
A tightly related model, which has perhaps been more studied between the two, is the \textit{$p$-bond-percolation}. There, one forms a random subgraph $G_p$ of a host graph $G=(V,E)$ by retaining each edge of $G$ with probability $p$. When the probability $p$ is clear from the context, we omit it and write site percolation and bond percolation.

In 1957, Broadbent and Hammersley \cite{zbMATH03148802} initiated the study of percolation theory in order to model the flow of fluid through a medium with randomly blocked channels. Since then, the theory of percolation has been studied extensively (see \cite{zbMATH05076923}, \cite{zbMATH01301486} and \cite{zbMATH03826957} for systematic coverage).

We first review some results in the bond percolation setting. The classical binomial random graph $G_{n,p}$ is an instance of bond percolation (indeed, $(K_n)_p\cong G_{n,p}$). A classical result of Erd\H os and R\'enyi from 1960 \cite{zbMATH03168330} states that $G_{n,p}$ undergoes a fundamental phase transition with respect to its component structure when the expected average degree is around $1$ (that is, $n\cdot p \approx 1$).

Formally, for an integer $d \geq 3$ and $p \in (0, 1)$, let $q \coloneqq q(p)$ be the unique solution in $(0, 1)$ of the equation:
\begin{align*}
    q=(1-p+pq)^{d-1}.
\end{align*}
This is the extinction probability of a Galton-Watson tree with offspring distribution $Bin(d - 1, p)$ (see, for example, \cite{zbMATH03410334} and \cite[Section 4.3.4]{zbMATH06976511}). Consider the probability $y \coloneqq y(p)$ that the root of an infinite $d$-regular tree
belongs to an infinite cluster after $p$-bond-percolation. Then, $y$ is given by

\begin{equation*}
\begin{split}
    y&=\sum_{i=1}^d \binom{d}{i}p^i(1-p)^{d-i}(1-q^i)=
    \sum_{i=1}^d \binom{d}{i}p^i(1-p)^{d-i}-\sum_{i=1}^d \binom{d}{i}(pq)^i(1-p)^{d-i}\\
    &=1-(1-p+pq)^d=1-q(1-p)-pq^2.
\end{split}
\end{equation*}
Let us note that when $d$ tends to infinity and $p=\frac{1+\epsilon}{d-1}$, for small constant $\epsilon>0$, then $y\coloneqq y(\epsilon)$ is asymptotically equal to the unique solution in $(0,1)$ of $1-y=\exp(-(1+\epsilon)y)$.
We can now explain the result of Erd\H os and R\'enyi. They showed that when $p=\frac{1-\epsilon}{n}$, typically all components of $G_{n,p}$ are of size $O(\log n/\epsilon^2)$, and when $p=\frac{1+\epsilon}{n}$, there typically exists a unique giant component $L_1$ in $G_{n,p}$ of order $(1 + o(1))y(\epsilon)n$, and all other components of $G_{n,p}$ are of size $O(\log n/\epsilon^2)$ (which is exactly the same order as in the subcritical regime).

The phenomenon Erd\H os and R\'enyi discovered for bond percolation on the complete graph $K_n$ was extended to other (families of) graphs, such as bond percolation on pseudo-random $(n,d,\lambda)$-graphs \cite{zbMATH02037533} and on the hypercube $Q^d$ \cite{zbMATH03769676,zbMATH00035138} (see also \cite{arXiv:2311.07210}). Recently, Diskin and Krivelevich \cite{zbMATH08098276,diskin2025componentslargesmalli} found that assuming optimal vertex and edge expansions up to sizes $O(\log n)$ and $O(d\log n)$, respectively, together with a very mild `global' edge expansion, are sufficient to typically have a unique giant component in the percolated subgraph of order $y(\epsilon)n$, and all other components are of typical order of $O(\log n/\epsilon^2)$, where $p=\frac{1+\epsilon}{d-1}$. They also showed that these conditions are tight in a certain well-defined quantitative sense.

In this paper, we consider the $p$-site-percolation setting and aim to extend the bond percolation results to the site percolation setting. In the site percolation setting, the anticipated result is slightly different from the aforementioned bond percolation result. We now formulate the phenomenon we aim to establish.
First, we observe that the probability that the root of an infinite $d$-regular tree is in the infinite cluster after site percolation is the same, up to the sampling of the root itself, as the probability that the root is in an infinite cluster after bond percolation.
Therefore, if we define $x:=x(p)$ to be the probability that the root of an infinite $d$-regular tree belongs to an infinite cluster after $p$-site-percolation, then

\begin{equation} \label{eq}
    x=p\cdot y=p-pq(1-p)-p^2q^2.
\end{equation}

It can be seen that when $d$ tends to infinity and $p=\frac{1+\epsilon}{d-1}$, for small constant $\epsilon>0$, then the asymptotic behavior of $x$ is $x=\frac{2\epsilon-O\left(\epsilon^2\right)}{d}$.

We say that a $d$-regular host graph $G$ on $n$ vertices exhibits the Erd\H os-R\'{e}nyi component phenomenon (ERCP), if given a sufficiently small constant $\epsilon>0$ and $p=\frac{1+\epsilon}{d-1}$, $G[V_p]$ possesses \textbf{whp}\footnote{With high probability, that is, with probability tending to one as $n$ tends to infinity.} the following properties: a unique giant component of asymptotic order $x\cdot n$ emerges, and all other components have order $O\left(\frac{\log (n/d)}{\epsilon^2}\right)$. Note that we focus on the supercritical regime in this paper since the subcritical regime, where $p=\frac{1-\epsilon}{d-1}$, is completely solved in this context. Indeed, Diskin and Krivelevich \cite{zbMATH07751060} showed that in this regime \textbf{whp} all components in $G[V_p]$ are of size $O\left(\frac{\log (n/d)}{\epsilon^2}\right)$ (see also \cite{zbMATH06534995}).

Let us mention a few prominent cases where ERCP has been partially established: for the $d$-dimensional Hamming torus \cite{zbMATH06285647}, for pseudo-random $(n,d,\lambda)$-graphs \cite{zbMATH07751060,zbMATH06534995} and for the binary hypercube $Q^d$ \cite{zbMATH00736289} it was shown that \textbf{whp} there is a unique giant component (in the percolated subgraph) of asymptotic order $x\cdot n$. For the binary hypercube $Q^d$, Bollob\'as, Kohayakawa, and \L uczak \cite{zbMATH00736289} further showed that all other components of $Q^d[V_p]$ are of size at most $d^{10}$. This result has been improved in \cite{zbMATH07770097}, showing that all other components of $Q^d[V_p]$ are of typical size of $O\left(d/\epsilon^5\right)$, thus nearly establishing ERCP in full.

The purpose of this paper is to establish sufficient, and relatively tight, conditions under which ERCP will hold in full. Note that in fact these conditions will capture the aforementioned cases (and thereby resolve open problems in their context). We show that a very mild `global' edge expansion, together with some control on the ‘local’ expansion of the graph, provide sufficient conditions for a $d$-regular graph $G$ to exhibit ERCP (Theorem~\ref{general} and Theorem~\ref{constant}). In addition, in the case where $d\coloneqq d(n)$ tends to infinity, we show in Theorem~\ref{no_second} that the conditions of Theorem~\ref{general} are relatively tight. Interestingly, achieving the same phenomenon in site- and bond-percolation necessitates different `local' assumptions. We also give a concrete statement handling $(n,d,\lambda)$-graphs in full (Theorem~\ref{n,d,lambda}). Before stating our results, we note that in the growing-degree case, we write $p=\frac{1+\epsilon}{d}$ instead of $p=\frac{1+\epsilon}{d-1}$, which we do for the clarity of representation.

Our first main result provides sufficient conditions for the full emergence of ERCP for growing-degree graphs.

\begin{theorem} \label{general}
    Let $c_1,c_2>0$ and let $0<\alpha\le 1$ be constants. Let $\epsilon>0$ be a sufficiently small constant. Then, there exist $c_3\coloneqq (c_2,\epsilon,\alpha)>0$ and $C\coloneqq C(\alpha)>0$ constants such that the following holds. Let $n$ be a sufficiently large integer, and let $d$ be an integer such that $d=\omega_n(1)$.
    
    Let $G=(V,E)$ be a $d$-regular graph on $n$ vertices, satisfying:
    \begin{enumerate}[(P\arabic*{})]
        \item For every $U\subseteq V(G)$ with $|U|\leq \frac{n}{2}$, $e_G(U,U^c)\ge c_1|U|$; \label{prop0A}
        \item For every $U\subseteq V(G)$ with $|U|\leq c_3d^{1-\alpha}\log n$, $|N_G(U)|\ge c_2d^\alpha|U|$; \label{prop1A}
        \item For every $U\subseteq V(G)$ with $|U|\leq 4d^{\frac{4}{\alpha}+2}\log^C n$, $e_G(U,U^c)\ge (1-\frac{\epsilon^2}{1000})d|U|$. \label{prop2A}
    \end{enumerate}
    Set $p=\frac{1+\epsilon}{d}$. Then, \textbf{whp} $G[V_p]$ contains a unique giant component $L_1$, satisfying $|V(L_1)| = (1 + o(1))x\cdot n$, where $x$ is defined according to \eqref{eq}, and all other components are of size at most $\frac{100}{\epsilon^2}\log n$.
\end{theorem}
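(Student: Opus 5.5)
We would follow the standard three-stage template used for bond percolation in \cite{zbMATH08098276,diskin2025componentslargesmalli}, adapted to site percolation. Set $k_0=\frac{100}{\epsilon^2}\log n$ and call a vertex of $G[V_p]$ \emph{big} if its component has more than $k_0$ vertices.

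\textbf{Step 1: the number of vertices in big components.} For a fixed vertex $v$, we explore its component in $G[V_p]$ by BFS, revealing vertices one at a time. By (P3), any explored set $S$ of size at most $4d^{4/\alpha+2}\log^C n$ has $e_G(S,S^c)\ge (1-\epsilon^2/1000)d|S|$. Hence all but a tiny fraction of vertices have almost $d-1$ fresh neighbours when they are explored, so the exploration can be coupled from below by a Galton--Watson process with offspring $Bin((1-\epsilon^2/500)(d-1),p)$. It is coupled from above by $Bin(d-1,p)$, plus $Bin(d,p)$ at the root. This gives $\Pr{v \text{ is big}}=(1+o(1))x$, since the survival probabilities of both processes tend to $x$ as $d\to\infty$ and the errors are of order $\epsilon^2$, which is absorbed into the $1+o(1)$. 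Beyond size $k_0$, the lower coupling survives up to size $d^{4/\alpha+2}\log^C n$ with probability $1-o(1)$. For concentration we bound the variance: two vertices at distance more than about $2k_0$ give nearly independent events, and each vertex has few vertices within that distance that can influence it. Alternatively, we use Talagrand or an edge-exposure martingale in the style of \cite{zbMATH07751060}. Either way the number of big vertices is $(1+o(1))xn$ \textbf{whp}.

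\textbf{Step 2: no components of intermediate size, and big components are large.} Next we show that \textbf{whp} every component of size larger than $k_0$ has size at least $d^{4/\alpha+1}\log^C n$, say. A component whose size $k$ lies in this range is a tree-like structure of $k$ retained vertices. By (P3), its outer vertex boundary must mostly be unretained, and it contains about $dk$ candidate boundary vertices (counted with multiplicity, corrected using (P3)). Counting connected sets of size $k$ containing a given vertex (at most $n(ed)^{k}$) against probability $p^k(1-p)^{(1-\epsilon^2/500)dk}$ gives an expected count of $n\bigl(e(1+\epsilon)e^{-(1+\epsilon)(1-\epsilon^2/500)}\bigr)^k=n e^{-\Theta(\epsilon^2)k}$. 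This is $o(1)$ for $k\ge k_0$. This first-moment computation is the source of the bound $\frac{100}{\epsilon^2}\log n$ in the statement.

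\textbf{Step 3: sprinkling to merge big components.} Write $p=p_1+p_2-p_1p_2$ with $p_2=\delta/d$ for small $\delta$. Steps 1–2 apply at $p_1$. We then show that all big components of $G[V_{p_1}]$ merge after adding $V_{p_2}$. The key structural claim is that each big component $K$, of size at least $d^{4/\alpha+1}\log^C n$, has a neighbourhood that is large and ``spread''. Using (P2) iteratively from small subsets of $K$, the ball of radius about $2/\alpha$ around $K$ grows by factors of $c_2d^\alpha$. Combined with (P1), which gives global expansion $c_1|U|$ for every $U$ with $|U|\le n/2$, any partition of the big components into two sides $A,B$ with substantial mass should admit many vertex-disjoint short paths between $A$ and $B$ of length $O(1/\alpha)$. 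Each such path survives in $V_{p_2}$ with probability $(\delta/d)^{O(1/\alpha)}$. So we need about $d^{O(1/\alpha)}\log n\cdot(\#\text{partitions})$ disjoint paths. The number of partitions is at most $2^{n/(d^{4/\alpha+1}\log^C n)}$, which is why (P3) is required up to size $d^{4/\alpha+2}\log^C n$. The path count then comes from a Menger-type argument: (P1) gives an edge cut of size at least $c_1\min(|A|,|B|)$, and (P2) thickens it into vertex-disjoint paths.

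Finally, the big components at $p$ form a single component $L_1$ of size $(1+o(1))xn$, where we use that sprinkling changes the count only by $O(\delta)xn$ and then let $\delta\to0$. All other components have size at most $k_0$ by Step 2 applied at $p$.

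We expect the main obstacle to be Step 3. The difficulty is converting the weak global expansion (P1), which gives only a constant $c_1$ rather than order $d$, into enough vertex-disjoint short paths between any two sides of a partition, because site percolation kills a path if even a single internal vertex is lost. This is exactly where (P2) at scale $d^{1-\alpha}\log n$ is needed, in place of the edge-expansion hypothesis used in the bond setting.
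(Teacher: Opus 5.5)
Your outline has the same three stages as the paper (no intermediate component sizes, sprinkling to merge, counting the giant). However, Step~2 rests on a false claim, Step~3 is missing its key ingredient, and Step~1 does not give $1+o(1)$.

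\textbf{Step 2.} You use (P3) to get $|N_G(S)|\ge(1-\epsilon^2/500)dk$ for every connected $S$ with $|S|=k$ in the intermediate range. But (P3) controls $e_G(S,S^c)$, i.e.\ boundary vertices counted with multiplicity. It cannot be corrected to a vertex-boundary bound: applying (P3) to $S\cup N_G(S)$ only gives $|N_G(S)|=\Omega(|S|/\epsilon^2)$. Concretely, take $Q^d$, which satisfies (P1)--(P3), and let $S$ be the closed neighbourhood of a subcube of dimension $j=O(\log(1/\epsilon))$. Then $S$ is connected and $|S|=2^j(d-j+1)\ge\frac{100}{\epsilon^2}\log n$, yet $|N_G(S)|=2^j\binom{d-j}{2}\approx\frac12 d|S|$. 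Bounding $(ed)^k$ connected sets by the worst-case probability $p^k(1-p)^{|N_G(S)|}$ then gives $e^{\Theta(k)}$, not $e^{-\Theta(\epsilon^2)k}$. You have no count of connected sets stratified by $|N_G(S)|$ to repair this.

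The paper's Lemma~\ref{nsc_general_general} avoids the issue by arguing inside the BFS:
\begin{itemize}
\item After $r$ queries the number of retained vertices is exactly $\mathrm{Bin}(r,p)$. So a (union of) component(s) of size $k$ forces the exploration to stop at some $r\approx k/p$ with $|N_G[S_r]|\le r$.
\item (P3) is applied to the \emph{queried} set $A\cup N_G[S_r]$, of size $O(r)$. It shows at most $\frac{\epsilon}{10}r$ queried vertices were ``bad'', i.e.\ had $\ge\frac{\epsilon}{20}d$ neighbours already in $N_G[S_{i-1}]$.
\item Chernoff on the good retained vertices then gives $|N_G[S_r]|>r$, a contradiction with high probability.
\end{itemize}

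\textbf{Step 1.} A lower Galton--Watson coupling with mean $(1+\epsilon)(1-\epsilon^2/500)$ does not follow from an ``all but a tiny fraction'' statement in the first place. Even granting it, it shifts the survival probability by a relative $\Theta(\epsilon)$. Since $\epsilon$ is fixed, this is a constant error and is not absorbed into $1+o(1)$. The paper couples only until $\sqrt d$ vertices are discovered, where every vertex trivially has $\ge d-\sqrt d$ unexplored neighbours. It then bridges $[\sqrt d,\frac{100}{\epsilon^2}\log n]$ with the fixed-vertex bound of Corollary~\ref{nsc1}. For concentration, your variance route fails because balls of radius $2k_0$ are the whole graph in $Q^d$; the paper uses a vertex-exposure martingale, which you list only as an alternative.

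\textbf{Step 3.} (P1) plus Menger gives $c_1\min(|A|,|B|)$ edge-disjoint $A$--$B$ paths, but nothing makes them short. Growing neighbourhoods of small subsets of $K$ via (P2), which holds only for sets of size at most $c_3d^{1-\alpha}\log n$, says nothing about where the cut edges of a thickened $A$ land. The missing idea is the covering lemma (Lemmas~\ref{dist_2} and~\ref{W_2}):
\begin{itemize}
\item By (P2), every ball $B(v,r_0)$ with $r_0\approx\frac1\alpha+\frac2\alpha\log_d\log n$ has $\Omega(d\log n)$ vertices. Hence, for $c_3$ large, it has at least $\frac{100}{\epsilon^2}\log n$ retained ones.
\item Lemma~\ref{nsc_general_general} applied to the \emph{set} $A=B(v,r_0)$ then forces the union of their components to have size at least $d^{4/\alpha+1}\log^Cn$.
\item Pigeonhole over the $\mathrm{poly}(d,\log n)$ vertices of the ball puts a vertex of $W_1$ in every such ball.
\end{itemize}
Only then do the $r_0$-neighbourhoods $A'\supseteq A$ and $B'\supseteq B$ of a component-respecting partition of $W_1$ cover $V(G)$. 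So (P1) applied to $A'$ gives $c_1|A|$ paths of length at most $2r_0+1$, hence $c_1|A|/d^{2r_0+1}$ vertex-disjoint ones.

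Three further points in Step 3:
\begin{itemize}
\item The paths have length $\Theta(\frac1\alpha\log_d\log n)$, not $O(1/\alpha)$.
\item The union bound must use at most $n^{a/(d^{4/\alpha+1}\log^Cn)}$ partitions for each value $|A|=a$. A uniform bound of $2^{n/(d^{4/\alpha+1}\log^Cn)}$ swamps the failure probability when $a$ is small.
\item Merging the big $G_1$-components does not by itself give uniqueness in $G[V_p]$. You must also rule out a component of size at least $d^{4/\alpha+1}\log^Cn$ built entirely from small $G_1$-components glued together by sprinkled vertices. The paper does this with the two-phase BFS of Lemma~\ref{big_intersect_W}.
\end{itemize}
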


Few comments are in place. The power of $d$ in assumption \ref{prop2A} is somewhat arbitrary and is a technical artifact of the proof. This condition implicitly impose an additional assumption on $d$, which is $d<n^{\gamma}$, for some constant $\gamma\coloneqq \gamma(\alpha)>0$. We note that this assumption is not very far from the one that must be taken into account --- as in the site percolation setting, the typical size of $V_p$ is $n/d$, the assumption $d=o(n)$ is necessary for us to make any \textbf{whp} statement to begin with. In addition, under this assumption on $d$, we remark that $\log n$ as written at the end of Theorem~\ref{general} is equivalent to $\log (n/d)$ as in the definition of ERCP.

Nevertheless, we will soon see (in Theorem~\ref{no_second}) that the power of $d$ in assumption \ref{prop2A} cannot be lowered below $2$. This stands in contrast to the aforementioned bond percolation setting \cite{diskin2025componentslargesmalli}, where, even when assuming optimal vertex and edge expansions up to size $O(\log n)$ and $O(d\log n)$, respectively (that is, taking $0$ and $1$ as the power of $d$ in assumptions \ref{prop1A} and \ref{prop2A}, respectively, and $C=1$), the full bond-percolation analogue of ERCP is satisfied.

The family of graphs satisfying the properties of Theorem~\ref{general} is quite wide, and includes random $d$-regular graphs, certain families of expanders and the $d$-dimensional hypercube $Q^{d}$. Below we discuss several concrete examples, and also remark on open problems the above theorem resolves.

\paragraph{}
\textbf{Hypercube.} The $d$-dimensional hypercube $Q^d$ is the graph with the vertex set $V(Q^d)=\{0,1\}^d$, where two vertices are adjacent if they differ in exactly one coordinate. The classical isoperimetric results of Harper \cite{zbMATH03353132,zbMATH03254683} show that the hypercube $Q^d$ satisfies \ref{prop0A}-\ref{prop2A}. First, we have that $n\coloneqq \left|V\left(Q^d\right)\right|=2^d$, and thus $d=\log_2 n$. By \cite{zbMATH03353132}, for every $U\subseteq V\left(Q^d\right)$, $e_G(U,U^c)\ge |U|(d-\log_2 |U|)$, and hence assumptions \ref{prop0A} and \ref{prop2A} are satisfied in $Q^d$. Finally, by \cite{zbMATH03254683}, for a set $U$ of size polynomial in $d$, the vertex expansion of $U$ is of order $\Omega(d)$, and thus \ref{prop1A} holds (even with $\alpha=1$). Therefore, Theorem~\ref{general} shows that $Q^d$ exhibits the full ERCP.

As mentioned in the introduction, Bollob\'as, Kohayakawa, and \L uczak \cite{zbMATH00736289} showed that, \textbf{whp}, there is a unique giant component in $Q^d[V_p]$, whose asymptotic size is $x\cdot n$, and conjectured that the other components besides the giant component are of asymptotic size at most $\gamma d$, where $\gamma$ is a constant depending only on $\epsilon$. Diskin and Krivelevich \cite[Theorem 1.2]{zbMATH07770097} proved this conjecture, with $\gamma$ of order $\frac{1}{\epsilon^5}$.
Theorem~\ref{general} recovers the aforementioned results, and in fact, resolves the question in full, obtaining the optimal dependence on $\epsilon$, with $\gamma$ of order $\frac{1}{\epsilon^2}$.

\paragraph{}
\textbf{Pseudo-Random Graphs.} As mentioned above, Theorem~\ref{general} can be applied to certain families of expanders/pseudo-random graphs. Let us focus on the family of $(n, d, \lambda)$-graphs, which is defined as follows. Assume that the eigenvalues of (the adjacency matrix of) a graph $G$ are ordered in the non-increasing order $\lambda_1 \geq \dots \geq \lambda_n$. The largest eigenvalue of any $d$-regular graph is easily seen to be $d$, sometimes referred to as the trivial eigenvalue of $G$. A graph $G$ is an $(n, d, \lambda)$-\textit{graph} if $G$ is a $d$-regular graph on $n$ vertices and all the eigenvalues of $G$, but the first one, are at most $\lambda$ in their absolute values. We refer the reader to the survey \cite{zbMATH05037081} for an extensive discussion on $(n,d,\lambda)$-graphs and other notions of pseudo-random graphs. Theorem~\ref{general} holds for $(n, d, \lambda)$-graphs with $\lambda\le d^{1-\gamma}$, where $\gamma>0$ is a constant. Indeed, by Alon and Milman \cite{zbMATH03875335} (see also Claim \ref{eml}), \ref{prop0A} and \ref{prop2A} hold for these graphs; standard results on the vertex expansion of $(n,d,\lambda)$-graphs show that \ref{prop1A} holds since we assumed that $\lambda\le d^{1-\gamma}$ (see, e.g., Lemma A.2 in \cite{arXiv:2503.06826}). In fact, our proof method is rather robust, and a slight change in the parametrization allows us to extend Theorem~\ref{general} to all $(n,d,\lambda)$-graphs with $\frac{\lambda}{d}<\delta$, for some constant $\delta>0$:

\begin{theorem} \label{n,d,lambda}
    For every small enough $\epsilon>0$, there exists $\delta>0$ such that the following holds. Let $n$ be sufficiently large and let $G=(V,E)$ be an $(n,d,\lambda)$-graph, where $d=\omega_n(1)$, $d=o(n)$, and $\frac{\lambda}{d}<\delta$. Set $p=\frac{1+\epsilon}{d}$. Then, \textbf{whp} $G[V_p]$ contains a unique giant component $L_1$, satisfying $|V(L_1)| = (1 + o(1))x\cdot n$, where $x$ is defined according to \eqref{eq}, and all other components are of size at most $\frac{100}{\epsilon^2}\log \left(\frac{n}{d}\right)$.
\end{theorem}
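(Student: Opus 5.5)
We would not invoke Theorem~\ref{general} as a black box. Instead we rerun its proof scheme, replacing each of \ref{prop0A}--\ref{prop2A} by a consequence of the expander mixing lemma (Claim~\ref{eml}) that holds once $\lambda/d<\delta$. The idea is to keep only what the spectral gap gives directly, namely edge expansion, and to avoid relying on polynomial vertex expansion (\ref{prop1A} with $d^\alpha$).

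\textbf{Global and local edge expansion.} For $|U|\le n/2$, Claim~\ref{eml} gives
\[
e(U,U^c)\ge (d-\lambda)|U||U^c|/n\ge \tfrac{d}{3}|U|,
\]
which is far stronger than \ref{prop0A}. For $|U|\le \eta n$ we get $e(U)\le \frac{d}{2n}|U|^2+\frac{\lambda}{2}|U|$, hence
\[
e(U,U^c)\ge \bigl(1-\eta-\delta\bigr)d|U|.
\]
Choosing $\eta,\delta\ll\epsilon^2$ therefore recovers \ref{prop2A} for \emph{all} sets of size up to $\eta n$, not only those of polynomial size in $d$. This extra range is what frees us from the implicit restriction $d<n^\gamma$, and it is also why the final bound can be stated with $\log(n/d)$.

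\textbf{Small-component bound.} We follow the usual route: explore the percolated graph with a BFS or DFS in which vertices are revealed as they are queried. A component of size $k$ comes with about $k\cdot d(1-O(\delta+\eta))$ boundary queries, all of which must have failed. A Chernoff comparison with a $\mathrm{Bin}(d,p)$ branching process then shows that, for $\frac{100}{\epsilon^2}\log(n/d)\le k\le \eta n/d$, having a component of that size that stops growing has probability $o(d/n)$ per starting vertex in $V_p$. The union bound runs over the roughly $n/d$ vertices of $V_p$, which is why $\log(n/d)$ suffices.

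\textbf{Giant component.} Vertices lying in components of size at least $K:=\frac{100}{\epsilon^2}\log(n/d)$ are handled by a local tree-likeness argument for $(n,d,\lambda)$-graphs. Since $\lambda<\delta d$, the number of short cycles through a typical vertex is small, and the first $O(\log K)$ levels of exploration are close to a $(d-1)$-ary tree. This gives that the expected number of such vertices is $(1+o(1))xn$. Concentration comes from an edge-exposure or vertex-exposure martingale, where changing one vertex affects the count by at most $O(dK)$, or alternatively from a second-moment bound.

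\textbf{Merging big components: the main obstacle.} We must show that all components of size at least $K$ coalesce into one. The plan is sprinkling: write $p=p_1+p_2$ with $p_2=\epsilon'/d$. Suppose the big components after $p_1$ could be split into two families $A,B$, each of size at least $\Omega(\epsilon n/d)$. We need many vertex-disjoint short $A$--$B$ paths whose interior vertices are then turned on by $p_2$.

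Because $A\cup B$ has size only about $n/d$, the edge-expansion bounds above are too weak here, since they say nothing about connecting two sparse sets of total size $\Theta(n/d)$. We would first grow balls: the $r$-th neighbourhood in $G$ of a set $S$ with $|S|\ge \epsilon n/d$ has size at least a constant fraction of $n$ after $r=O(1)$ steps. This follows because Claim~\ref{eml} gives vertex expansion $|N(S)|\ge \min\{ \Omega(d^2/\lambda^2)|S|, n/2\}$, which is a large constant factor when $\delta$ is small.

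Then, by Menger's theorem and the expander mixing lemma, there are $\Omega(n/d)$ vertex-disjoint $A$--$B$ paths of length $O(1)$. Each such path is fully retained by $p_2$ with probability $(\epsilon'/d)^{O(1)}$, which is not enough on its own. We would therefore use the standard Ajtai--Komlós--Szemerédi/Krivelevich trick of exploiting the many vertices in big components adjacent to each ball, counting paths through $p_1$-components rather than through bare vertices.

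This path-counting step is where the constant path length forces a careful choice of $\delta$ relative to $\epsilon$. I expect it to be the hardest part. Finally, a union bound over the at most $2^{n/(dK)}$ partitions gives the uniqueness of the giant component.
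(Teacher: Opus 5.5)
Your edge-expansion step and your intermediate-size bound match what the paper does. The merging step, however, is left unresolved, and the specific claims you make there fail.

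First, spectral vertex expansion is too weak for sets of size $\Theta(n/d)$. In the worst case $\lambda\approx\delta d$ that the hypothesis allows, the factor $d^2/\lambda^2$ is only about $\delta^{-2}$, a constant. The mixing lemma bounds $|V\setminus N[S]|$ only by $\delta^2n^2/|S|$, which is vacuous for $|S|=\Theta(n/d)$ once $d\gg\delta^{-2}$. So growing $A$ to linear size takes $\Theta(\log d/\log(1/\delta))$ steps, not $O(1)$. Paths through these layers have $\Theta(\log d)$ interior vertices and survive the sprinkle with probability $d^{-\Theta(\log d)}$.

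Second, even granting paths of length $3$, having $\Omega(n/d)$ vertex-disjoint paths that each survive with probability $(\epsilon'/d)^2$ gives only $O(n/d^3)$ expected survivors. This cannot beat a union bound over $2^{n/(dK)}$ partitions, which would need $d^2\ll K=\Theta(\log(n/d))$. It is not even $\omega(1)$ once $d\ge n^{1/3}$. Deferring to ``the standard AKS trick'' leaves out exactly the argument that is needed.

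The paper's Lemma~\ref{one_comp_2_pseudo} closes this with three ideas.

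(i) Your own intermediate-size bound (the paper pushes it to $\epsilon^3 n/d$) already shows that every big $G_1$-component has size $\Omega(n/d)$. Since $|V_{p_1}|\le 2n/d$ whp, there are only $O(1)$ such components. So it suffices to connect one fixed pair $A,B$ whp, and no exponential union bound is needed. Your proposal has this ingredient but does not use it.

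(ii) Instead of spectral expansion of arbitrary sets, use expansion of the percolation clusters themselves (Lemma~\ref{expansion}). A component $S$ of $G[V_{p_1}]$ with $|S|>300\log n$ has $|N_G(S)|\ge\frac{9}{10}d|S|$, because all these neighbours were queried and rejected. Hence $N_G(A)$ and $N_G(B)$ are of linear size after a single step.

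(iii) Either $N_G(A)\cap N_G(B)$ is linear, in which case a sprinkled common neighbour gives a path of length $2$. Or, with $A'=N_G(A)$ and $B'=N_G(B)\setminus A'$, the mixing lemma gives $e_G(A',B')=\Omega(\epsilon^6 dn)$. Expose the sprinkle on $A'$ first. The set $B^*$ of vertices in $B'$ with a sprinkled neighbour in $A'$ has mean at least $\frac{p_2}{2}e_G(A',B')=\Omega(n)$, and it concentrates by Claim~\ref{hoef}, since each vertex affects at most $d$ others. Then exposing $B'$ hits $B^*$ in $\Omega(n/d)=\omega(1)$ vertices. Using all $\Theta(dn)$ edges, with dependencies handled by the two-round exposure, is what makes this work for every $d=o(n)$.

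Separately, your justification of the giant's size is false as stated. The condition $\lambda<\delta d$ does not make neighbourhoods cycle-poor: for $d\gg\sqrt n$, which the theorem allows, a vertex lies on average on $\Omega(d^4/n)$ four-cycles. What is actually needed is only that explored vertices have few already-queried neighbours, and the paper simply reuses the argument of Lemma~\ref{large_comp_2}.
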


Let us note that Theorem~\ref{n,d,lambda} recovers and significantly strengthens the result of \cite{zbMATH07751060}, by showing not only that typically the giant component has the correct order, but also the second-largest component, thus resolving the problem for $(n,d,\lambda)$-graphs in full.

We now turn to discuss the tightness of Theorem~\ref{general}. First, notice that a global assumption on the graph like \ref{prop0A} is necessary, as for example, a disjoint union of two random $d$-regular graphs \textbf{whp} fails to contain a unique giant component after site percolation. Regarding \ref{prop1A} and \ref{prop2A}, we first recall some related results in the bond percolation setting. In \cite{zbMATH08098276,diskin2025componentslargesmalli}, it was established that \ref{prop0A} together with nearly optimal vertex and edge expansions up to sizes $O(\log n)$ and $O(d\log n)$, respectively, are sufficient to guarantee the bond-percolation analogous ERCP.  Conversely, if the `local' expansion conditions are omitted, the bond-percolation analogous ERCP no longer holds \cite[Theorem 3]{diskin2024percolationisoperimetry}. The following result shows that, in the site percolation setting, stronger `local' expansion conditions are necessary in order to ensure the ERCP. This reveals a distinction between the site- and bond-percolation settings, while also demonstrating that Theorem~\ref{general} is relatively tight. 

\begin{theorem} \label{no_second}
     For every constant $\alpha>0$, for every sufficiently small constant $\epsilon>0$ and for every $0<b\le \frac{1}{500\epsilon}$, there exists $c\coloneqq c(\alpha)>0$ which is sufficiently small in terms of $\alpha$, and there are infinitely many pairs $(d,n)\in\mathbb{N}^2$, with $d=\omega_n(1)$ and $d<n^{1/5}$, for which there exists a $d$-regular graph $G=(V,E)$ on $n$ vertices such that:
    \begin{enumerate}[(Q\arabic*{})]
        \item For every $U\subseteq V(G)$ with $|U|\leq \frac{n}{2}$, $e_G(U,U^c)\geq b|U|$; \label{prop0B}
        \item For every $U\subseteq V(G)$ with $|U|\leq c\cdot d\log n$, $|N_G(U)|\ge \left(1-\alpha\right)d|U|$; \label{prop1B}
        \item For every $U\subseteq V(G)$ with $|U|\leq \frac{\alpha}{2}\cdot d^2\log n$, $e_G(U,U^c)\ge \left(1-\alpha\right)d|U|$, \label{prop2B}
    \end{enumerate}
    and, moreover, $G$ satisfies the following. Set $p=\frac{1+\epsilon}{d}$. \textbf{Whp}, $G[V_p]$ contains at least two components of size at least $\epsilon d\log n$.
\end{theorem}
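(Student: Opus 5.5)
The plan is an explicit construction: glue together many identical ``gadgets''. Each gadget is a locally excellent expander of order $m=\Theta(d^2\log n)$, and the gadgets are joined to each other by only about $b m$ edges per gadget. The scale $d^2\log n$ is dictated by the numbers. Inside a gadget, site percolation with $p=\frac{1+\epsilon}{d}$ leaves about $m/d$ vertices and produces a local giant of order $x m\approx \frac{2\epsilon m}{d}$. Taking $m\approx d^2\log n$ makes this at least $\epsilon d\log n$. At the same time \ref{prop2B} is only required for sets of size up to $\frac{\alpha}{2}d^2\log n<m$, so a whole gadget may have small edge boundary.

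\emph{Construction.} Fix $m=\lceil d^2\log n\rceil$ and choose $N=n/m$ with $d<n^{1/5}$. Take $N$ disjoint copies $H_1,\dots,H_N$, each a random $d$-regular graph on $m$ vertices. In each $H_i$, delete a matching $M_i$ of about $bm$ edges (a random one, say). Then reconnect the freed endpoints across gadgets by a random perfect matching between all freed half-edges, which restores $d$-regularity.
\begin{itemize}
\item Property \ref{prop0B} follows from two facts. The gadgets themselves are expanders. The ``gadget multigraph'' is a random graph on $N$ vertices with degrees about $bm$, hence an expander. A set $U$ with $|U|\le n/2$ either splits many gadgets, which gives edges inside gadgets, or contains many whole gadgets, which gives $\Omega(bm)$ crossing edges per gadget, and $bm\ge b\cdot$(size of the gadget part) up to constants.
\item Properties \ref{prop1B} and \ref{prop2B} are local. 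For $|U|\le c\,d\log n$ we have $d|U|\le c\,m$, and standard first-moment estimates for random $d$-regular graphs on $m$ vertices give $|N(U)|\ge(1-\alpha)d|U|$ once $c=c(\alpha)$ is small. For $|U|\le \frac{\alpha}{2}d^2\log n\le \frac{\alpha}{2}m$, the internal edges satisfy $e(U)\le \frac{\alpha}{2}d|U|$, up to lower-order terms, by the same estimates.
\item The rewiring changes degrees only by moving edges, so the bounds survive after a union bound over all $N\le n$ gadgets.
\end{itemize}

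\emph{Percolation.} The plan has three steps.
\begin{enumerate}
\item Show that for a fixed gadget $H_i$, with probability at least $1/2$, $H_i[V_p]$ has a component $L^{(i)}$ of size at least $\epsilon d\log n$. This uses the known giant-component result for site percolation on pseudo-random graphs \cite{zbMATH07751060} applied to $H_i$, which is an $(m,d,O(\sqrt d))$-graph whp. Since the result is \textbf{whp} in $m\to\infty$, constant probability suffices.
\item Condition on $H_i[V_p]$. Each of the about $bm$ crossing edges leaving $L^{(i)}$ attaches it to the outside only if its inner endpoint lies in $L^{(i)}$ and its outer endpoint is in $V_p$. The expected number of such edges is at most about $bm\cdot\frac{2\epsilon}{d}\cdot\frac{1+\epsilon}{d}\le 3\epsilon b\log n\le \frac{\log n}{150}$, using $b\le\frac{1}{500\epsilon}$. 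The outer endpoints are distinct vertices sampled independently of the interior, so $L^{(i)}$ is a whole component of $G[V_p]$ with probability at least $(1-p)^{bm\cdot 2\epsilon/d}\ge n^{-1/100}$.
\item Let $X$ be the number of gadgets for which both events occur. Then $\mathbb E X\ge N n^{-1/100}/2=n^{1-1/100-o(1)}\to\infty$. The events for gadgets that share no crossing edges are independent, and each gadget meets only $O(bm)$ others, so Chebyshev's inequality gives $X\ge 2$ \textbf{whp}.
\end{enumerate}
This yields two components of size at least $\epsilon d\log n$.

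\emph{Main obstacle.} The delicate step is verifying \ref{prop0B}--\ref{prop2B} simultaneously for the rewired graph, in particular \ref{prop0B} with constant $b$ for sets that cut through many gadgets partially. I would handle this by checking two things. First, each gadget has internal edge expansion $\Omega(d)$ for all subsets up to half its size. Second, the quotient graph on gadgets has edge expansion $\Omega(bm)$. A set is then charged either to its partial gadgets (internal expansion) or to its full gadgets (quotient expansion).
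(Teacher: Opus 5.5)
Your overall strategy matches the paper's: gadgets of order $k=d^2\log n$ carrying random regular graphs, glued by a sparse expander of degree $O(b)$. You check \ref{prop1B}--\ref{prop2B} inside gadgets, take the site-percolation giant of \cite{zbMATH07751060} inside one gadget, isolate it with probability $n^{-O(b\epsilon)}$, and repeat over many far-apart gadgets. However, the construction as you specify it does not work, for three reasons.

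\begin{itemize}
\item \emph{Too few crossing edges.} A matching on $m$ vertices has at most $m/2$ edges, so ``delete a matching of about $bm$ edges'' is impossible once $b>1/2$. More fundamentally, if each vertex carries at most one crossing edge, let $U$ be the union of half of the gadgets. Then $e_G(U,U^c)\le |U|$, so \ref{prop0B} fails for every $b>1$. The theorem must hold for all $b\le \frac{1}{500\epsilon}$, which is large (the paper even assumes $b\ge 2$). You need $\Theta(b)$ crossing edges at essentially every vertex.
\item \emph{The percolation theorem is applied to the wrong graph.} The result of \cite{zbMATH07751060} concerns regular $(n,d,\lambda)$-graphs, and you apply it to $H_i$. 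But $L^{(i)}$ must be a component of the percolated internal graph $H_i-M_i$. That graph is a subgraph of $H_i$, so a giant in $H_i$ does not transfer down to it. For a non-perfect matching it is also not regular.
\item \emph{The union bound is unjustified.} ``Whp'' for a random $d$-regular graph on $m$ vertices only gives failure probability $o(1)$ as $m\to\infty$. You need $o(1/N)$ with $N=n/m$, and $N$ exceeds every power of $m$ when $d$ is polylogarithmic. So the union bound over $N$ independent random gadgets fails as stated.
\end{itemize}

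There are also two smaller points.
\begin{itemize}
\item Step 2 needs the deterministic bounds $|L^{(i)}|\le 3\epsilon d\log n$ and $O(b)$ crossing edges per vertex, not an expectation.
\item The event for gadget $i$ depends on $A_i\cup N_G(A_i)$. So independence requires gadgets at distance at least $3$, not merely no shared crossing edge. Chebyshev still goes through: only $O(b^2m)$ gadgets lie within distance $2$ of a given one, and $\mathbb{E}X\ge n^{0.9}/(2d^2\log n)\gg b^2d^2\log n$ because $d<n^{1/5}$.
\end{itemize}

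The paper's construction avoids all of these at once, and it is essentially what your rewiring becomes once repaired. The repair is to delete a $10b$-factor, so the internal graph stays regular with second eigenvalue at most $O(\sqrt d)+10b$ by Weyl, and to use identical gadgets. Concretely, the paper proceeds as follows.
\begin{itemize}
\item It takes one random $10b$-regular graph $H_1$ on $n$ vertices, whp an $(n,10b,3\sqrt{10b-1})$-graph. An equitable colouring partitions it into $n/k$ independent sets $A_i$ of size $k$.
\item In every $A_i$ it places a copy of one fixed good $(k,d-10b,O(\sqrt d))$-graph $H_2$. Then $G$ is simple and exactly $d$-regular, and no union bound over gadgets is needed.
\item \ref{prop0B} follows in one line from Alon--Milman applied to the spanning subgraph $H_1$, with no charging over partial and full gadgets.
\item Since $G[A_i]\cong H_2$ is regular, \cite{zbMATH07751060} applies with $p(d-10b)=1+\epsilon-o(1)$. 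It yields a component $S$ with $|S|\in[\epsilon d\log n,3\epsilon d\log n]$. Its at most $10b|S|$ outside neighbours all miss $V_p$ with probability at least $n^{-50b\epsilon}\ge n^{-0.1}$.
\item Finally, the paper takes an independent set of size at least $n/(200b^2k^2)$ in the auxiliary graph joining $A_i,A_j$ at distance at most $2$. This gives genuinely independent indicators, so no second-moment computation is needed.
\end{itemize}
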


We note that, in fact, it can further be shown that for the graph $G$ constructed in the proof of Theorem~\ref{no_second}, \textbf{whp} $G[V_p]$ contains a giant component of size at least $(1-o(1))x\cdot n$.

A gap arises between the exponent of $d$ in the assumptions stated in Theorem~\ref{general} (with $\alpha=1$ there) and in Theorem~\ref{no_second}. Note that using concepts from the proof of Theorem 1 in \cite{diskin2025componentslargesmalli}, one can narrow the gap between the exponents of $d$ in the third assumption of the two theorems, but still not achieve $2$ in the exponent. A natural open question therefore, which is discussed in more detail in Section \ref{discussion}, is to determine the correct exponent, which would be both sufficient and necessary for ERCP.

Finally, we address graphs with constant (fixed) degree. As in the case where $d=\omega(1)$, our goal here is to find sufficient properties of a graph to have the full ERCP. The differences and similarities between the two regimes of $d$ can be seen in more detail in Section \ref{Pconstant}. Our result is strongly inspired by the analogous result in the bond percolation setting appearing in \cite{zbMATH08098276}. The conditions in the two settings (bond- and site-percolation) are the same, and the proofs are quite similar. However, in the site percolation setting, the second condition \ref{prop2C} plays an additional role, not utilized in the bond percolation setting, as it relates to the local vertex expansion of the graph. Before stating the result, we emphasize that when $d=O(1)$ we write $p=\frac{1+\epsilon}{d-1}$ and cannot replace it by $p=\frac{1+\epsilon}{d}$ as in the growing-degree case.

\begin{theorem}\label{constant}
    Let $n$ be a sufficiently large integer, and let $3\le d=O(1)$ be an integer. Let $1<\alpha<d-1$ and let $b>0$ be constants. Then, there exist constants $\delta:=\delta(\alpha)>0$ and $C:=C(\alpha)>0$ such that the following holds.
    Let $G=(V,E)$ be a $d$-regular graph on $n$ vertices, satisfying:
    \begin{enumerate}[(R\arabic*{})]
        \item For every $U\subseteq V(G)$ with $|U|\leq \frac{n}{2}$, $e(U,U^c)\geq b|U|$; \label{prop0C}
        \item For every $U\subseteq V(G)$ with $|U|\leq \log^C n$, $e(U)\le \left(1+\delta\right)|U|$. \label{prop2C}
    \end{enumerate}
    Set $p=\frac{\alpha}{d-1}$. Then, \textbf{whp} $G[V_p]$ contains a unique giant component $L_1$, satisfying $|V(L_1)| = (1 + o_\delta(1))x\cdot n$, where $x$ is defined according to \eqref{eq}, and all other components are of size at most $\frac{9\alpha}{\left(\frac{1-\delta}{1+\delta}\alpha-1\right)^2}\log n$.
\end{theorem}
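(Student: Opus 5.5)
We follow the standard three-stage scheme used for the bond-percolation analogue in \cite{zbMATH08098276}: (i) a local exploration showing that every vertex either lies in a small component or in a ``large'' one; (ii) a sprinkling/global argument merging all large components into one; (iii) a second-moment computation for the number of vertices in large components. Throughout we write $p=\frac{\alpha}{d-1}$, and we expose $V_p$ by a BFS/DFS exploration from a fixed vertex $v$.

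\textbf{Step 1 (local coupling with a branching process).} By \ref{prop2C}, every connected set $U$ with $|U|\le \log^C n$ spans at most $(1+\delta)|U|$ edges, so it is ``almost a tree''. Consequently, when exploring the component of $v$ in $G[V_p]$ up to size $k=\log^C n$, the explored set $S$ has outer vertex boundary at least $|N_G(S)|\ge (d-2)|S|-O(\delta|S|)$. To see this, count the edges leaving $S$: there are $d|S|-2e(S)\ge (d-2-2\delta)|S|$ of them. Then apply \ref{prop2C} to $S\cup N(S)$ to bound how many of these edges can land on a common neighbour. Hence the exploration stochastically dominates a Galton--Watson process with offspring $\mathrm{Bin}((d-1)\frac{1-\delta}{1+\delta}\cdot(1-o(1)),p)$, whose mean is $\frac{1-\delta}{1+\delta}\alpha>1$ for small $\delta$. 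It is also dominated by a $\mathrm{Bin}(d-1,p)$ process. A Chernoff bound on the exploration queue then shows the following. With probability $1-o(n^{-1})$, either the component of $v$ dies before reaching size $\frac{9\alpha}{(\frac{1-\delta}{1+\delta}\alpha-1)^2}\log n$, or the exploration reaches $\log^C n$ vertices with an active boundary of linear size in the explored set. This gives the bound on small components. The same two-sided comparison gives
\[
\Pr{v \text{ reaches } \log^C n} = (1+o_\delta(1))x,
\]
since the survival probability of the GW tree with the root sampled is exactly $x$ from \eqref{eq}.

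\textbf{Step 2 (counting).} Let $X$ be the number of vertices in components of size at least $\log^C n$. By Step 1, $\Ex{X}=(1+o_\delta(1))xn$. Explorations from two vertices at distance greater than $2\log^C n$ are nearly independent, and $d=O(1)$ means only $n^{o(1)}$ pairs are closer than that. Hence $\mathrm{Var}(X)=o(n^2)$, and by Chebyshev's inequality $X=(1+o_\delta(1))xn$ \textbf{whp}.

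\textbf{Step 3 (merging; the main obstacle).} It remains to show that all large components form a single one. We use sprinkling: write $p=1-(1-p_1)(1-p_2)$ with $p_2=n^{-1/3}$-small, apply Steps 1--2 with $p_1$, and sprinkle with $p_2$. The difficulty is that in site percolation sprinkled vertices must form connecting paths, and a constant global expansion $b$ from \ref{prop0C} yields only weak path systems.

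The plan is to first show that \textbf{whp} every vertex of $G$ lies within distance $O(\log\log n)$ of a large $p_1$-component. This uses Step 1 applied to the $\log^C n$-ball structure: \ref{prop2C} forces exponential ball growth, so there are many disjoint exploration attempts. Next, suppose some partition $A\cup B$ of the large components had no connecting sprinkled path. Using \ref{prop0C} together with Menger's theorem, we find $\Omega(n/\log^{O(1)} n)$ edge-disjoint (hence, for bounded $d$, $\Omega(n/\log^{O(1)}n)$ vertex-disjoint after pruning) short paths between the neighbourhoods of $A$ and $B$. Each such path is fully opened with probability $p_2^{O(\log\log n)}$.

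A union bound over partitions is affordable because the number of large components is at most $n/\log^C n$, giving $2^{n/\log^C n}$ partitions. This is dominated by the failure probability $\exp(-n p_2^{O(\log\log n)}/\log^{O(1)}n)$ provided $C$ is chosen large enough in terms of $\alpha$. Finally, sprinkling changes the giant's size by only $o(n)$, and the small-component bound survives because Step 1 was proved for all $p'\in[p_1,p]$.
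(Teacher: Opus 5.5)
Two load-bearing steps of your plan fail as written.

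\textbf{Merging.} With $p_2=n^{-1/3}$ the sprinkling argument cannot succeed. A connecting path has $\Theta(\log\log n)$ internal vertices, all of which must be sprinkled, so it is open with probability $n^{-\Theta(\log\log n)}$. Even $n$ disjoint paths then give an expected number of open paths that is $o(1)$, and your failure bound $\exp(-np_2^{O(\log\log n)}/\log^{O(1)}n)$ tends to $1$. The claim of $\Omega(n/\log^{O(1)}n)$ disjoint paths for \emph{every} partition is also false when $|A|$ is small. The $O(\log\log n)$-neighbourhood of $A$ has only $|A|\log^{O(1)}n$ vertices, so (R1) yields only about $b|A|$ crossing edges and $\Omega(|A|/\log^{O(1)}n)$ disjoint paths; the crude count of $2^{n/\log^C n}$ partitions is then not beaten. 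The paper sprinkles with a \emph{constant} $p_2=\frac{s}{d-1}$, so each path is open with probability $(\log n)^{-O(1)}$. It then stratifies the union bound by $a=|A|$: at most $n^{a/\log^C n}$ component-respecting choices of $A$, against a failure probability $\exp(-\Omega(a)\log^{-O(1)}n)$, which wins once $C$ is large. A constant $p_2$, however, changes the giant by $\Theta(n)$, so your closing step ``sprinkling changes the giant's size by only $o(n)$'' is lost. You must count vertices in large components directly at $p$. You also need a lemma that is absent from your plan: no component of $G[V_p]$ of size at least $\log^C n$ avoids the set $W$ of vertices in large $G_1$-components. The paper proves this with a two-round exploration, as in Lemma~\ref{big_intersect_W}.

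\textbf{Lower bound on the giant.} A Galton--Watson process with offspring $\mathrm{Bin}\big((d-1)\frac{1-\delta}{1+\delta}(1-o(1)),p\big)$ is meaningless for fixed $d$: the number of trials rounds down to $d-2$, which is subcritical for $d=3$, $\alpha<2$. The bound $|N_G(S)|\ge(\frac{d}{1+\delta}-2)|S|$ is a global count. It does give the small-component bound (that part matches the paper's key argument), but not a per-step domination. In fact $\mathbb{P}(v \text{ reaches size } \log^C n)=(1+o_\delta(1))x$ is false for individual vertices. (R2) permits triangles, and a vertex on a triangle reaches size $\log^C n$ with probability smaller than $x$ by an amount depending only on $d$ and $\alpha$. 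Only the averaged statement holds, and it needs Lemma~\ref{balls}. (R2) forces short cycles to be pairwise far apart, so all but $(d-1)^{-1/(16\delta)}n$ vertices have an acyclic ball of radius $\frac{1}{16\delta}$. For those vertices, exact tree comparison gives $\mathbb{P}(|C_{G[V_p]}(v)|\ge\frac{1}{16\delta})\ge(1-o_\delta(1))x$. The key argument then shows that only $o_\delta(1)xn$ vertices lie in components of size in $[\frac{1}{16\delta},\log^C n]$. Finally, your Step~2 variance argument is wrong as stated: (R1) forces logarithmic diameter, so no two vertices are at distance greater than $2\log^C n$. Replace it with the bounded differences inequality, as the paper does; toggling one vertex changes these counts by only $O(d\log^C n)$.
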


Let us present some applications of this result. Observe that any $n$-vertex graph $G$ in which every two cycles of length at most $C\log\log n$ are at distance at least $C\log\log n$ satisfies \ref{prop2C}. Hence, $d$-regular $n$-vertex expanders with girth $\Omega(\log\log n)$ have the behavior in the supercritical regime postulated by Theorem~\ref{constant}. Furthermore, since a random $d$-regular graph typically satisfies this (see, for example, \cite{zbMATH01342092}), and also \textbf{whp} satisfies \ref{prop0C} (see \cite{zbMATH04101270}), we can apply our result for random $d$-regular graphs. Janson \cite{zbMATH05636576} showed the typical emergence of a giant component of size $\Theta(n)$ and all other components of size $o(n)$ in the percolated subgraph of a random regular graph. Our theorem recovers this result and moreover improves the upper bound on the typical size of the second-largest component.

The paper is structured as follows. In Section \ref{pre} we set our notation, describe a modification of the Breadth First Search (BFS) algorithm, and collect several useful lemmas. Section \ref{Pgeneral} is dedicated to the proof of Theorem~\ref{general}. In Sections \ref{Pn,d,lambda} and \ref{Pconstant} we show how to adapt the proof of Theorem~\ref{general} to $(n,d,\lambda)$-graphs (Theorem~\ref{n,d,lambda}), and to the case where $d=O(1)$ (Theorem~\ref{constant}), respectively. In Section \ref{Pno_second} we present a construction proving Theorem~\ref{no_second}. Finally, we discuss our results and avenues for future research in Section \ref{discussion}.

\section{Preliminaries} \label{pre}

Let us first state some notational conventions used in the paper. Consider a graph $H$ with $V\coloneqq V(H)$ and a vertex $v \in V$. Let $d_H(v)$ be the degree of $v$ in the graph $H$. Let $C_H(v)$ be the connected component in $H$ that contains $v$. When we perform site percolation on the graph $H$, we say that $C_{H[V_p]}(v)=\emptyset$ if $v$ falls outside of $V_p$. For an integer $r$, let $B_H(v, r)$ be the ball of radius $r$ in $H$ centered at $v$, that is, the set of all vertices of $H$ at distance of at most $r$ (edges) from $v$. For $u, v \in V (H)$ which lie in the same connected component, denote the distance in $H$ between $u$ and $v$ by $dist_H(u, v)$. For $S \subseteq V (H)$, set $dist_H(u, S) \coloneqq \min_{v\in S} dist_H(u, v)$ if there is a path from $S$ to $u$ in $H$, and set $dist_H(u, S)=\infty$ otherwise. As is fairly standard, $E_H(S, S^c)$ represents the set of edges in $H$ with one endpoint in $S$ and the other in $S^c\coloneqq V(H) \backslash S$, and $E_H(S)$ denotes the set of edges in the induced subgraph $H[S]$. We use $e_H(S, S^c) \coloneqq |E_H(S, S^c)|$ and $e_H(S) \coloneqq |E_H(S)|$. We denote by $N_H(S)$ the external neighborhood of $S$ in $H$, and denote by $N_H[S]$ the closed neighborhood of $S$, that is, $N_H[S]=S\cup N_H(S)$. When the base graph $H$ is clear from the context, the subscript may be omitted. All logarithms are with the natural base. Throughout the paper, we systematically ignore rounding signs as long as this does not affect the validity of our arguments.

\subsection{A modified Breadth First Search process}

We will utilize the following modification (whose definition is similar to the one in \cite{zbMATH08098276,zbMATH06534995}) of the classical Breadth First Search (BFS) exploration algorithm. The algorithm receives as input a graph $G = (V, E)$ with an ordering $\sigma$ on its vertices, and a sequence $(X_i)_{i=1}^{|V|}$ of i.i.d. Bernoulli$(p)$ random variables.

We maintain four sets throughout the process: $S$, the set of vertices whose exploration has been completed; $U$, the set of vertices currently being explored, kept in a queue (first-in-first-out discipline); $T$, the set of vertices which have yet been processed; $J$, the set of vertices found to fall outside the random set of vertices $V_p$. We initialize $S, J, U = \emptyset$; and $T=V$, and run the algorithm until $U\cup T=\emptyset$.

We say that we are in round $t\in \mathbb{N}$ of the algorithm if exactly $t-1$ vertices have already been queried by the algorithm. In round $t$, we do the following: if $U$ is empty, we consider the first vertex $v$ in $T$ according to the order $\sigma$. If $X_t = 1$, we retain this vertex and move it from $T$ to $U$. If $X_t = 0$, we move it to $J$ and finish the round. Otherwise, if $U\neq \emptyset$, we consider the first vertex $v$ in $U$, and query the first neighbor of $v$ in $T$, according to the order $\sigma$. If $X_t = 1$, we retain this vertex and move it from $T$ to $U$. If $X_t = 0$, we move it to $J$ and finish the round. If there are no neighbors of $v$ in $T$ left to query, we move $v$ from $U$ to $S$ and start the next round, unless $U\cup T=\emptyset$.

When the execution terminates, the set $S$ has the same distribution as $V_p$ and the algorithm has revealed the connected components of $G[V_p]$. Furthermore, at any stage of the algorithm, it has already been revealed that the graph $G$ has no edges between the current set $S$ and the current set $T$, and thus $N_G(S) \subseteq U \cup J$.

\subsection{Auxiliary Lemmas}

We will make use of the following two fairly standard probability bounds. The first is a Chernoff-type tail bound for the binomial distribution (see, for example, Appendix A in \cite{zbMATH06566409}):
\begin{claim} \label{cher}
    Let $X\sim Bin(n,p)$. Then for any $0<t\leq \frac{np}{2}$,
    \begin{align*}
        \mathbb{P}(|X-np|\geq t)\leq 2\exp \left(-\frac{t^2}{3np}\right).
    \end{align*}
\end{claim}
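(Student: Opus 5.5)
This is the standard Chernoff bound, and I would prove it by the exponential moment method, treating the upper and lower tails separately and then taking a union bound, which accounts for the factor $2$. Write $\mu=np$ and $X=\sum_{i=1}^n X_i$ with $X_i$ i.i.d. Bernoulli$(p)$.

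For the upper tail, fix $\lambda>0$. By Markov's inequality,
\begin{align*}
\Pr{X\ge \mu+t}\le e^{-\lambda(\mu+t)}\Ex{e^{\lambda X}} = e^{-\lambda(\mu+t)}\left(1+p(e^\lambda-1)\right)^n\le \exp\left(\mu(e^\lambda-1)-\lambda(\mu+t)\right),
\end{align*}
where the last step uses $1+z\le e^z$. I would choose $\lambda=\log(1+\delta)$ with $\delta=t/\mu\le 1/2$. This gives the bound $\exp(-\mu\varphi(\delta))$, where $\varphi(\delta)=(1+\delta)\log(1+\delta)-\delta$.

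For the lower tail, apply the same argument to $e^{-\lambda X}$ with $\lambda=-\log(1-\delta)$. This gives $\Pr{X\le \mu-t}\le \exp(-\mu\psi(\delta))$, where $\psi(\delta)=(1-\delta)\log(1-\delta)+\delta$.

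The only real step is the elementary calculus verifying $\varphi(\delta)\ge \delta^2/3$ and $\psi(\delta)\ge\delta^2/3$ on $[0,1/2]$. Since $\psi\ge\varphi$ there (compare the Taylor series termwise: $\psi(\delta)=\sum_{k\ge2}\frac{\delta^k}{k(k-1)}$, while $\varphi$ has alternating signs), it suffices to treat $\varphi$. Let $h(\delta)=\varphi(\delta)-\delta^2/3$. Then $h(0)=0$ and $h'(\delta)=\log(1+\delta)-2\delta/3$, which satisfies $h'(0)=0$. Moreover $h''(\delta)=\frac{1}{1+\delta}-\frac23\ge 0$ for $\delta\le 1/2$. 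Hence $h'\ge0$ and so $h\ge0$ on $[0,1/2]$.

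Substituting $\mu\delta^2/3=t^2/(3np)$ and summing the two tails yields the stated bound. Here the hypothesis $t\le np/2$ is exactly what guarantees $\delta\le 1/2$, where the convexity check holds.
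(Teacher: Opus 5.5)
Your proof is correct: the exponential-moment bounds with $\lambda=\log(1+\delta)$ and $\lambda=-\log(1-\delta)$, the termwise series comparison giving $\psi\ge\varphi$, and the convexity check $h''\ge 0$ on $[0,1/2]$ (which is exactly where $t\le np/2$ is used) all go through. The paper does not prove this claim itself but cites Appendix A of Alon--Spencer, which uses this same exponential-moment (Chernoff) method, so your approach is essentially the one the paper relies on.
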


The second is a variant of the well-known Azuma-Hoeffding inequality (see, for example, Chapter 7 in \cite{zbMATH06566409} and \cite[Corollary 6]{zbMATH06788897}):
\begin{claim} \label{hoef}
    Let $m \in N$ and let $p \in [0, 1]$. Let $X = (X_1, X_2, \dots , X_m)$ be a random vector with range $\{0, 1\}^m$ with $X_i$ being independent $Bernoulli(p)$ for every $1\le i\le m$. Let $f:\{0, 1\}^m \xrightarrow{} \mathbb{R}$ be such that there exists $C \in \mathbb{R}$ such that for every $x, x' \in \{0, 1\}^m$ which differ only in one coordinate,
    $$|f(x)-f(x')|\leq C.$$
    Then, for every $t \geq 0$,
    \begin{enumerate}
        \item \begin{align*}
            \mathbb{P}(|f(X)-\mathbb{E}[f(X)]|\geq t)\leq 2\exp \left(-\frac{t^2}{2C^2mp+Ct}\right);
        \end{align*}
        \item \begin{align*}
            \mathbb{P}(|f(X)-\mathbb{E}[f(X)]|\geq t)\leq 2\exp \left(-\frac{t^2}{2C^2m}\right).
        \end{align*}
    \end{enumerate}
\end{claim}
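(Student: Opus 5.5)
The statement is the standard bounded-differences inequality; I would prove it by the Doob martingale method, using Hoeffding's lemma for part 2 and a Bernstein/Freedman-type moment bound for part 1. Let $\mathcal{F}_i=\sigma(X_1,\dots,X_i)$ and $M_i=\mathbb{E}[f(X)\mid\mathcal{F}_i]$. Then $M_0=\mathbb{E}[f(X)]$, $M_m=f(X)$, and $f(X)-\mathbb{E}[f(X)]=\sum_{i=1}^m D_i$ with $D_i=M_i-M_{i-1}$.

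\emph{Structure of the increments.} Because the $X_j$ are independent, we can write $M_i=g_i(X_1,\dots,X_i)$, where $g_i$ averages $f$ over the remaining coordinates. Set
\[
\Delta_i=g_i(X_1,\dots,X_{i-1},1)-g_i(X_1,\dots,X_{i-1},0).
\]
Then $D_i=(1-p)\Delta_i$ if $X_i=1$ and $D_i=-p\Delta_i$ if $X_i=0$. Here $\Delta_i$ is $\mathcal{F}_{i-1}$-measurable. It is an average of differences of $f$ at points differing only in coordinate $i$, so the Lipschitz hypothesis gives $|\Delta_i|\le C$. Consequently, conditionally on $\mathcal{F}_{i-1}$:
\begin{itemize}
\item $D_i$ has mean zero;
\item $D_i$ lies in an interval of length $|\Delta_i|\le C$;
\item $|D_i|\le C$;
\item $\mathrm{Var}(D_i\mid\mathcal{F}_{i-1})=p(1-p)\Delta_i^2\le C^2p$.
\end{itemize}

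\emph{Part 2.} Hoeffding's lemma gives $\mathbb{E}[e^{\lambda D_i}\mid\mathcal{F}_{i-1}]\le e^{\lambda^2C^2/8}$. Iterating the tower property yields $\mathbb{E}[e^{\lambda\sum D_i}]\le e^{\lambda^2 mC^2/8}$. Markov's inequality with the optimal choice $\lambda=4t/(mC^2)$ gives the one-sided bound $e^{-2t^2/(mC^2)}$. Applying the same argument to $-f$ and taking a union bound gives a two-sided bound of $2e^{-2t^2/(mC^2)}$, which is stronger than the stated $2e^{-t^2/(2C^2m)}$.

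\emph{Part 1.} Here we use the variance rather than the range. For a mean-zero variable $D$ with $|D|\le C$ and conditional variance $\sigma^2$, expanding the exponential gives, for $\lambda>0$,
\[
\mathbb{E}[e^{\lambda D}]\le 1+\sigma^2\,\frac{e^{\lambda C}-1-\lambda C}{C^2}\le \exp\Bigl(\sigma^2\,\frac{e^{\lambda C}-1-\lambda C}{C^2}\Bigr).
\]
The first inequality uses that $(e^x-1-x)/x^2$ is increasing. With $\sigma^2\le C^2p$ and iterated conditioning,
\[
\mathbb{E}\bigl[e^{\lambda\sum D_i}\bigr]\le \exp\bigl(mp\,(e^{\lambda C}-1-\lambda C)\bigr).
\]
Next apply the elementary bound $e^x-1-x\le \frac{x^2/2}{1-x/3}$ for $0\le x<3$, and take $\lambda=\frac{t}{C^2mp+Ct/3}$; one checks that $\lambda C<3$. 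This gives the Bernstein bound $\exp\bigl(-\frac{t^2}{2(C^2mp+Ct/3)}\bigr)$, which is at most $\exp\bigl(-\frac{t^2}{2C^2mp+Ct}\bigr)$. A union bound over the two tails gives the factor $2$.

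The only step needing care is the exponential moment estimate in part 1, specifically checking that the conditional variance bound $C^2p$ holds uniformly, which the explicit two-point form of $D_i$ makes immediate. The rest is routine. Alternatively, one may simply cite Freedman's inequality or McDiarmid's Bernstein-type version, as in the references given.
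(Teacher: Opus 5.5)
Your argument is correct. The two-point form of the Doob increments gives the conditional range bound $|\Delta_i|\le C$ and the conditional variance bound $p(1-p)\Delta_i^2\le C^2p$, and from these Hoeffding's lemma and the Bernstein moment estimate give slightly stronger versions of both parts. Only the degenerate cases $p=0$ or $C=0$, where $f(X)$ is a.s.\ constant, need a separate word.

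The paper does not prove this claim; it cites Alon--Spencer (Chapter 7) and a Bernstein-type bounded-differences corollary. Your martingale derivation is the standard argument behind those references.
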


We also state a fairly basic combinatorial identity involving binomial combinatorics. Let $1\le k\le n$ be integers. Then,
\begin{equation}\label{identity}
    \left(\frac{n}{k}\right)^k\le \binom{n}{k}\le \sum_{i=0}^k \binom{n}{i}\le \left(\frac{en}{k}\right)^k.
\end{equation}

\subsection{Some General Lemmas on the Percolated Subgraph}

For any $d$-regular graph, we use the fact, stated and proved in \cite[Claim 2.1]{zbMATH07770097}, that typically connected components in the percolated subgraph admit good vertex expansion:

\begin{lemma}\label{expansion}
    Let $G = (V, E)$ be a $d$-regular graph on $n$ vertices. Let $\epsilon>0$ be a sufficiently small constant, and let $p=\frac{1+\epsilon}{d}$. Then, \textbf{whp}, any connected component $S$ of $G[V_p]$ with $|S|>300\log n$ has $|N_G(S)|\ge \frac{9}{10}d|S|$.
\end{lemma}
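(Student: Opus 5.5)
We will prove Lemma~\ref{expansion} by a first-moment (union bound) argument over connected sets, exposing components with the modified BFS described above. Fix $k>300\log n$. If some component $S$ of $G[V_p]$ with $|S|=k$ has $|N_G(S)|<\frac{9}{10}dk$, then there is a set $S$ with $|S|=k$ such that $G[S]$ is connected, $S\subseteq V_p$, and $N_G(S)$ is disjoint from $V_p$. It is convenient to work with a tree: $G[S]$ contains a spanning tree $T$ with $V(T)=S$. The number of subtrees of $G$ on $k$ vertices is at most $n(ed)^{k-1}$, since a rooted tree with $k$ vertices in a graph of maximum degree $d$ can be encoded by the root and a BFS code. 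The probability that $S\subseteq V_p$ is $p^k$. This gives an expected count of at most $n(ed)^{k-1}\left(\frac{1+\epsilon}{d}\right)^k\le n\,(e(1+\epsilon))^k/d$. This alone is not small, so we must also use the event that $N_G(S)$ avoids $V_p$, with $|N_G(S)|=m$.

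The cleaner route is to run the BFS of the previous subsection from a fixed vertex and bound, for the exploration producing a component of size exactly $k$, the number of queried vertices. When a component $S$ is fully explored, every vertex of $N_G(S)$ has been queried and found outside $V_p$. So the component has size $k$ and $|N_G(S)|=m$ only if, among the first $k+m$ queries made while exploring it, exactly $k-1$ (excluding the root) are successes; the queried vertices are precisely $S\cup N_G(S)$ minus the root. Hence, for a fixed starting vertex, the probability that the explored component has size $k$ and boundary $m<\frac{9}{10}dk$ is at most $\Pr{Bin\left(\frac{9}{10}dk+k,p\right)\ge k-1}$. The mean of this binomial is $(1+\epsilon)(0.9+1/d)k\le 0.95k$ for $\epsilon$ small and $d$ large. (For bounded $d$ we would use the $+1$ slack and simply note $0.9(1+\epsilon)<1$; the lemma as stated presumably has $d$ large anyway.) Indeed the exploration only stops after revealing all of $N_G(S)$; if fewer than $k-1$ successes occur among the first $\lfloor 0.9dk\rfloor+k$ queries, the component either ends with $|S|<k$ or keeps needing more queries.

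By Claim~\ref{cher} with $t\approx 0.05k$ and $np\approx 0.95k$, this probability is at most $2\exp(-ck)$ for an absolute constant $c$. This is roughly $\exp(-0.0025k/2.85)$, which is too weak for $k=300\log n$, so the constants need care. The correct tool is the sharper Chernoff upper tail $\Pr{X\ge (1+\eta)\mu}\le (e^\eta/(1+\eta)^{1+\eta})^\mu$. Taking $\mu\approx 0.9(1+\epsilon)k$ and target $k$, the exponent is $\mu(\eta-(1+\eta)\log(1+\eta))$ with $1+\eta\approx 1/0.9$, giving roughly $k(0.1-\log(1/0.9))\cdot(-1)\approx -0.0054k$. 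At $k=300\log n$ this is still only about $n^{-1.6}$. That suffices: union bound over $n$ starting vertices and at most $n$ values of $k$ gives $n^2\cdot n^{-1.6}$, which is not enough. So we would instead apply the bound to the event ``at most $k-1$ successes in the first $0.9dk+k$ trials'' only once per component, via the sequential BFS over the whole graph. Fix the global run; for each $t$ at which a new component starts, and each $k$, the bad event is determined by the next $O(dk)$ Bernoulli variables. Summing over $n$ start times and over $k$ gives a total of $\sum_k n\,e^{-0.0054k}=o(1)$ once $k\ge 300\log n$, because $300\cdot 0.0054>1.6>1$.

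The main obstacle is exactly this constant bookkeeping: we must verify that $300$ beats the Chernoff rate at ratio $0.9(1+\epsilon)$, and that the union over $k$ costs only a factor absorbed by the geometric decay. If the numerics fall short, we would tighten by using $\epsilon$ small so that the ratio is $0.9+O(\epsilon)$ and recompute the relative-entropy rate $D(1\,\|\,0.9)$ per trial, which gives rate $k\,(\log(1/0.9)\cdot 1-0.1)^{-}$; this is just enough.
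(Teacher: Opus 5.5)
The paper does not prove this lemma; it imports it from \cite[Claim 2.1]{zbMATH07770097}, so there is no internal proof to compare against. Your final argument is the standard exploration argument for this kind of statement, and it is correct. Run the BFS from $v$. Completing a component $S$ with $|S|=k$ requires querying every vertex of $N_G(S)$ with a negative answer, so $|N_G(S)|<\frac{9}{10}dk$ forces at least $k$ successes among the first $\frac{9}{10}dk+k$ trials. The relative-entropy Chernoff bound gives probability at most $e^{-ck}$, where $c\to\log\frac{10}{9}-\frac{1}{10}\approx 0.0054$ as $\epsilon\to 0$ and $d\to\infty$. The union bound then costs a factor $n$ for the starting vertex, and only a constant factor $1/(1-e^{-c})$ for the geometric sum over $k\ge 300\log n$. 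Since $300c>1$, the total is $O(n^{1-300c})=o(1)$.

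Two points need fixing.

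\textbf{Bounded $d$.} Your parenthetical about bounded $d$ is wrong. The binomial mean is $(0.9+\frac{1}{d})(1+\epsilon)k$, which already exceeds $k$ for $d\le 10$. Worse, for $d\le 19$ the conclusion fails for every connected $S$ with $|S|>20$, because $|N_G(S)|\le e_G(S,S^c)\le d|S|-2(|S|-1)=(d-2)|S|+2<\frac{9}{10}d|S|$. Your argument needs $(0.9+\frac{1}{d})(1+\epsilon)$ to stay below roughly $0.92$, so that $c>\frac{1}{300}$. You must therefore state a hypothesis $d\ge d_0$; the paper assumes this implicitly, since it only applies the lemma with $d=\omega(1)$.

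\textbf{The write-up.}
\begin{enumerate}
\item Drop the tree-counting false start.
\item Drop the self-contradictory ``$n^2\cdot n^{-1.6}$'' sentence. The remedy is the geometric sum over $k$, not the switch to the global run. A per-vertex BFS, union-bounded over the $n$ vertices, is cleaner.
\item If you do keep the global run, note that part of $N_G(S)$ may already lie in $J$. Then at most, rather than exactly, $k+|N_G(S)|$ queries are made, which only helps.
\item Your first Chernoff attempt failed only because you inflated the mean to $0.95k$. With mean $(0.9+o(1))k$, even Claim~\ref{cher} gives rate about $\frac{(0.1)^2}{3\cdot 0.9}=\frac{1}{270}>\frac{1}{300}$ for $\epsilon$ small enough.
\end{enumerate}
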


The goal of the following lemma and its three corollaries, at a heuristic level, is to \textbf{whp} rule out the existence of connected components in $G[V_p]$ of `intermediate' sizes, where this `intermediate' size depends on `how far' the graph has an almost perfect edge expansion. For these four claims, we consider the following setting: let $\epsilon>0$ be a sufficiently small constant. Let $G$ be a $d$-regular graph on $n$ vertices such that for every $U\subseteq V(G)$ with $|U|\le t$, $e(U,U^c)\ge \left(1-\frac{\epsilon^2}{1000}\right)d|U|$, for some integer $t\coloneqq t(n,d)\le n$. Finally, set $p=\frac{1+\epsilon}{d}$.

\begin{lemma}\label{nsc_general_general}
    Let $A\subseteq V(G)$, and fix $k\in \left[\frac{50}{\epsilon^2}, \frac{t}{4d}\right]$. The probability that
    \begin{align*}
        \left|\bigcup_{u\in A} C_{G[V_p]}(u)\right|=k
    \end{align*}
    is at most $3\epsilon kd\cdot \exp\left(-\frac{\epsilon^2}{25}\cdot k\right)$.
\end{lemma}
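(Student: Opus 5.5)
The plan is to expose the components of the vertices of $A$ with the modified BFS of the previous subsection, and to turn the event $\bigl|\bigcup_{u\in A}C_{G[V_p]}(u)\bigr|=k$ into a binomial tail event. Fix an ordering $\sigma$ of $V(G)$ in which the vertices of $A$ come first. Run the algorithm until the first round in which $U$ becomes empty after all vertices of $A$ have been processed.

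At that moment $S=\bigcup_{u\in A}C_{G[V_p]}(u)$. Write $S_0$ for this set; on our event $|S_0|=k$. Since $U=\emptyset$ and no edges join $S$ and $T$, we get $N_G(S_0)\subseteq J$. So the number of queries made so far is $N=k+|J|\ge k+|N_G(S_0)|$, and among the first $N$ Bernoulli variables exactly $k$ (up to the at most $|A|$ vertices of $A$ that failed, which only add to $J$) equal $1$. Hence, with $m_0$ a lower bound on $|N_G(S_0)|$, the event forces: for some $N\ge k+m_0$, exactly $k$ successes occur among $X_1,\dots,X_N$.

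The key deterministic step, which I expect to be the main obstacle, is to show that $m_0\ge(1-\epsilon/5)dk$. The hypothesis only controls edge expansion of sets of size at most $t$. Note that $|N_G[S_0]|\le (d+1)k\le t$ because $k\le t/(4d)$, so every subset of $N_G[S_0]$ is admissible. Write $\eta=\epsilon^2/1000$.

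From $e(S_0,S_0^c)\ge(1-\eta)dk$ we get $e(S_0)\le \eta dk/2$. Split $N_G(S_0)=M_1\cup M_2$, where $M_1$ consists of the vertices with exactly one neighbour in $S_0$ and $M_2$ of those with at least two. Let $e_2=e(S_0,M_2)$. Then $|M_1|+e_2\ge(1-\eta)dk$ and $|N_G(S_0)|\ge (1-\eta)dk-e_2+|M_2|$, so it suffices to show $e_2-|M_2|\le \epsilon dk/10$.

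For this, apply the expansion hypothesis to $S_0\cup M_2$. Its internal edge count is at least $e_2+(k-1)$, since $S_0$ is connected on each component and we may take the spanning forest of $S_0$. This gives $2e_2\lesssim \eta d(k+|M_2|)$. Combining this with the trivial bound $|M_2|\le e_2/2$ works directly only when $\eta d$ is bounded. For growing $d$ I would instead apply the hypothesis iteratively to $S_0\cup M_2'$, where $M_2'\subseteq M_2$ is chosen greedily among the vertices with many $S_0$-neighbours, thereby bounding the number of ``wasted'' edges $e_2-|M_2|$ by $O(\sqrt{\eta})dk=O(\epsilon)dk$. If this fails, the fallback is to also charge the failed queries of vertices of $J$ discovered with multiplicity inside the BFS, where each edge of $E(S_0,S_0^c)$ is examined once.

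Given $N\ge k+(1-\epsilon/5)dk$, we have $Np\ge(1+\epsilon)(1-\epsilon/5)k\ge(1+\tfrac{3\epsilon}{4})k$. By Claim~\ref{cher} with deviation $t\approx \tfrac{3\epsilon}{4}k$, each fixed $N$ then satisfies
\[
\mathbb{P}\bigl(\mathrm{Bin}(N,p)=k\bigr)\le 2\exp\bigl(-\tfrac{\epsilon^2}{25}k\cdot(1+O(\epsilon))\bigr).
\]
For $N$ much larger than $dk$ the same bound holds with room to spare, since $Np\ge 2k$, and summing over such $N$ gives a geometrically decaying contribution. We then take a union bound over the stopping time $N$ in the window $[(1-\epsilon/5)dk,\,(1+\epsilon)dk]$, which has length at most $\tfrac{6}{5}\epsilon dk$. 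This yields the total bound $3\epsilon kd\exp(-\epsilon^2k/25)$. The lower bound $k\ge 50/\epsilon^2$ is used to absorb lower-order terms, such as the factor $2$ and the $1+O(\epsilon)$ in the exponent, into the constant.
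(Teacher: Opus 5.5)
There is a genuine gap, at exactly the step you flagged. Your argument uses only the information ``exactly $k$ successes among $N\ge k+|N_G(S_0)|$ queries''. It therefore needs the \emph{deterministic} bound $|N_G(S_0)|\ge(1-\epsilon/5)dk$ for every possible outcome $S_0$ with $|S_0|=k$. This bound does not follow from the edge-expansion hypothesis once $\eta d\to\infty$, which is the relevant regime (the paper's proof uses $d=\omega(1)$).

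Take $G=Q^d$. By Harper's bound $e(U,U^c)\ge|U|(d-\log_2|U|)$, it satisfies the hypothesis with $t=2^{\eta d}$. The star $S_0=B(0,1)$ is connected and has $k=d+1\in[50/\epsilon^2,t/(4d)]$, yet $|N_G(S_0)|=\binom{d}{2}\approx dk/2$. Every vertex of $M_2$ has two neighbours in $S_0$. However, every $U\subseteq B(0,2)$ has $e(U)\le 2|U|\ll\eta d|U|/2$, so no application of the hypothesis to sets $S_0\cup M_2'$, greedy or otherwise, can detect the wasted edges.

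Nor is this only a matter of constants. Let a suitable $\Theta(\epsilon)$-fraction of the $k$ vertices of $S_0$ form far-apart stars, keep the rest pairwise far apart, and take $A=S_0$. Then $|N_G[S_0]|$ can be made essentially equal to $k/p$. The stopping time then satisfies $Np\approx k$, and $\mathbb{P}(\mathrm{Bin}(N,p)=k)$ is not exponentially small in $k$, so the bare count cannot give the lemma. Your fallback is also unavailable. In the site-percolation BFS each vertex of $J$ is queried once, and re-querying it along every edge of $E(S_0,S_0^c)$ would destroy the independence behind the binomial comparison.

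The missing idea is that the expansion has to be obtained probabilistically, by tracking \emph{which} queries succeed rather than how many. The paper calls a vertex bad if, at the moment it is queried, it has at most $(1-\frac{\epsilon}{20})d$ neighbours outside the current $N_G[S_{i-1}]$.

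\begin{itemize}
\item Stopping times $r<\min I$ or $r>\max I$ are handled by binomial tails, and the case $|A|>2k/p$ is handled separately.
\item For $r$ in the window $I$ around $k/p$, apply the hypothesis to $A\cup N_G[S_r]$, which has size at most $(3+2\epsilon)r\le t$. This shows deterministically that at most $\frac{\epsilon}{10}r$ queried vertices are bad.
\item A vertex's label is fixed before its coin is tossed. Hence the number of good vertices in $S_r$ dominates $\mathrm{Bin}((1-\frac{\epsilon}{10})r,p)$.
\item Each retained good vertex adds at least $(1-\frac{\epsilon}{20})d$ new vertices to $N_G[S_r]$. Thus $|N_G[S_r]|>r$ except with probability $\exp(-\epsilon^2k/25)$.
\item The union bound over $r\in I$ gives the factor $\epsilon kd$.
\end{itemize}

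In the star examples, most star leaves are bad when queried. Such outcomes are rare precisely because too few \emph{good} queries succeed, and this is the information your count throws away.
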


\begin{proof}
    We run the BFS algorithm equipped with an order $\sigma$ on $V(G)$, satisfying that the vertices in $A$ are the first vertices in the order.
    After round $r>0$ of the algorithm, we let $V_r=(v_1,\dots,v_r)$ be the vertices that have already been queried by the algorithm, where the order of $V_r$ is according to the query order. Furthermore, let $S_r=V_r\cap V_p$; $S_0=\emptyset$. Observe that by construction, $V_r\subseteq A\cup N_G[S_r]$.

    Suppose that at round $r$ of the algorithm ($1\le r \le n$), we have just finished exploring $\bigcup_{u\in A} C_{G[V_p]}(u)$ and have that $\left|\bigcup_{u\in A} C_{G[V_p]}(u)\right|=k$. In this case, $|S_r|=k$ and all vertices in $N_G[S_r]$ have been queried, thus $|N_G[S_r]|\le r$. Define $I=\left[\frac{k}{\left(1+\frac{3\epsilon}{5}\right)p},\frac{k}{\left(1-\frac{3\epsilon}{5}\right)p}\right]$ and distinguish between three regimes: $1\le r\le \min I$, $r\in I$ and $\max I\le r\le n$. Notice that

    \begin{align}\label{maineq}
    \notag
        \mathbb{P}\left(\left|\bigcup_{u\in A} C_{G[V_p]}(u)\right|=k\right)&\le \mathbb{P}\bigg(\exists_{1\le r\le n}\;\big[|S_r|=k\wedge |N_G[S_r]|\le r\big]\bigg)\\\notag
        &\le \mathbb{P}\left(\exists_{1\le r\le \min I}\;|S_r|=k\right)+\mathbb{P}\left(\exists_{r\in I}\;|N_G[S_r]|\le r\right)+\mathbb{P}\left(\exists_{\max I\le r\le n}\;|S_r|=k\right)\\
        &\le \mathbb{P}\left(|S_{\min I}|\ge k\right)+\sum_{r\in I}\mathbb{P}\left(|N_G[S_r]|\le r\right)+\mathbb{P}\left(|S_{\max I}|\le k\right),
    \end{align}
    where the last inequality holds by the following observation: note that $|S_1|$ is less than $k$. Thus, if $|S_r|=k$ for some $1\le r \le \min I$, then in particular $|S_{\min I}|\ge k$. Hence, the event $\left(\exists_{1\le r\le \min I}\;|S_r|=k\right)$ is contained in the event $\left(|S_{\min I}|\ge k\right)$. Similarly, for $\max I\le r\le n$, the event $\left(\exists_{\max I\le r\le n}\;|S_r|=k\right)$ is contained in the event $\left(|S_{\max I}|\le k\right)$.

    We first bound the probability that $|S_{\min I}|\ge k$ from above. By Claim \ref{cher},
    
    \begin{align}\label{start}
    \notag
    \mathbb{P}\left(|S_{\min I}|\ge k\right)&\le\mathbb{P}\left(Bin\left(\min I,p\right)\ge k\right)\le 2\exp\left(-\frac{\left(k-\min I\cdot p\right)^2}{3\cdot \min I\cdot p}\right)\\
    &=2\exp\left(-\frac{\left(1-\frac{1}{1+\frac{3\epsilon}{5}}\right)^2k^2}{3\cdot \frac{k}{1+\frac{3\epsilon}{5}}}\right)\le 2\exp\left(-\frac{2\epsilon^2}{25}k\right)\le \exp\left(-\frac{\epsilon^2}{25}k\right),
    \end{align}
    where the last inequality holds since $k\ge \frac{50}{\epsilon^2}$.

    We now turn to estimating the probability that $|S_{\max I}|\le k$. By Claim \ref{cher},
    
    \begin{align}\label{end}
    \notag
    \mathbb{P}\left(|S_{\max I}|\le k\right)&\le\mathbb{P}\left(Bin\left(\max I,p\right)\le k\right)\le 2\exp\left(-\frac{\left(\max I\cdot p-k\right)^2}{3\cdot \max I\cdot p}\right)\\
    &=2\exp\left(-\frac{\left(\frac{1}{1-\frac{3\epsilon}{5}}-1\right)^2k^2}{3\cdot \frac{k}{1-\frac{3\epsilon}{5}}}\right)\le 2\exp\left(-\frac{2\epsilon^2}{25}k\right)\le \exp\left(-\frac{\epsilon^2}{25}k\right),
    \end{align}
    where the last inequality holds since $k\ge \frac{50}{\epsilon^2}$. 
    
    Finally, we turn to the case where $r\in I$. In this case we bound from above the probability that the exposure of the components of $A$ is completed in $r$ rounds of the BFS algorithm and that the union of these components consists of $k$ vertices. In particular, it suffices to bound from above the probability that $|N_G[S_r]|\le r$ where $|S_r|=k$. First note that if $|A|\ge 2\cdot \frac{k}{p}$, then by Claim \ref{cher}, with probability at least $1-2\exp\left(-\frac{k}{6}\right)$, we have $|A\cap V_p|>k$, and hence 
    \begin{align*}
        \mathbb{P}\left(\left|\bigcup_{u\in A} C_{G[V_p]}(u)\right|=k\right)\le \mathbb{P}\left(|A\cap V_p|\le k\right)\le 2\exp\left(-\frac{k}{6}\right).
    \end{align*}
    Thus, we can narrow our discussion to sets $A$ of size at most $2\cdot \frac{k}{p}\le 2\left(1+\frac{3\epsilon}{5}\right)r$. Hence,

    \begin{align}\label{neighborhood}
        |A\cup N_G[S_r]|\le |A|+r\le (3+2\epsilon)r.
    \end{align}

    Let $i\le r$, and consider the $i$-th round of the BFS algorithm. We say that a vertex $u\in V(G)$ is \textit{bad} with respect to $S_{i-1}$ if it has at most $\left(1-\frac{\epsilon}{20}\right)d$ neighbors in $V(G)\backslash N_G[S_{i-1}]$, and is \textit{good} otherwise. Once a vertex is labeled as good or bad, the label persists for the duration of the algorithm.
    Assume towards a contradiction that $V_r$ contains more than $\frac{\epsilon}{10} r$ bad vertices. As mentioned earlier, $A\cup N_G[S_r]\supseteq V_r$, thus using \eqref{neighborhood} we have
    \begin{align}\label{upper}
        e(A\cup N_G[S_r])\ge \frac{\epsilon}{10}r\cdot \frac{\epsilon}{20} d\ge \frac{\epsilon^2}{200(3+2\epsilon)}d|A\cup N_G[S_r]|.
    \end{align}
    On the other hand, since $r\in I$, then $|A\cup N_G[S_r]|\le |A|+r\le (3+2\epsilon)r\le 4dk\le t$, and therefore we can use our assumption that for every $U\subseteq V(G)$ with $|U|\le t$, $e(U,U^c)\ge \left(1-\frac{\epsilon^2}{1000}\right)d|U|$ to obtain:
    \begin{align*}
        e(A\cup N_G[S_r],(A\cup N_G[S_r])^c)\ge \left(1-\frac{\epsilon^2}{1000}\right)d|A\cup N_G[S_r]|,
    \end{align*}
    which contradicts \eqref{upper}. Thus, $V_r$ cannot contain more than $\frac{\epsilon}{10} r$ bad vertices. We further show that the number of bad vertices in $S_r$ is typically small; to be precise, we show that it is typically smaller than $\frac{7\epsilon}{10}\cdot rp$. Since we have at least $\left(1-\frac{\epsilon}{10}\right)r$ good vertices in $V_r$, the expected number of good vertices in $S_r$ is at least $\left(1-\frac{\epsilon}{10}\right)rp$, and therefore the number of good vertices in $V_r$ is stochastically dominated by $Bin\left(\left(1-\frac{\epsilon}{10}\right)r,p\right)$. Hence, by Claim \ref{cher}, the probability that we have less than $\left(1-\frac{7\epsilon}{10}\right)rp$ good vertices in $S_r$ is at most:

    \begin{align*}
        \mathbb{P}\left(Bin\left(\left(1-\frac{\epsilon}{10}\right)r,p\right)<\left(1-\frac{7\epsilon}{10}\right)rp\right)&\le 2\exp\left(-\frac{\frac{9\epsilon^2}{25} r^2p^2}{3rp}\right)\\
        &=2\exp\left(-\frac{3\epsilon^2}{25}\cdot rp\right)\\
        &\le \exp\left(-\frac{\epsilon^2}{25}\cdot k\right),
    \end{align*}
    where the last inequality holds since $r\ge\frac{k}{\left(1+\frac{3\epsilon}{5}\right)p}$ and since $k\ge \frac{50}{\epsilon^2}$.
    
    To conclude, with probability at least $1-\exp\left(-\frac{\epsilon^2}{25}\cdot k\right)$, $S_r$ contains at least $\left(1-\frac{7\epsilon}{10}\right)rp$ good vertices. Since every good vertex that falls into $S_r$ contributes at least $\left(1-\frac{\epsilon}{20}\right)d$ new vertices to the closed neighborhood of $S_r$, using $|S_r|\le \left(1+\frac{3\epsilon}{5}\right)rp$, we derive that with probability $1-\exp\left(-\frac{\epsilon^2}{25}\cdot k\right)$,
    \begin{align*}
        |N_G(S_r)|&\ge \left(1-\frac{7\epsilon}{10}\right)rp\cdot \left(1-\frac{\epsilon}{20}\right)d-|S_r|\ge \left(1-\frac{7\epsilon}{10}\right)\left(1-\frac{\epsilon}{20}\right)(1+\epsilon)r-\left(1+\frac{3\epsilon}{5}\right)rp>r,
    \end{align*}
    where the last inequality holds since $d=\omega(1)$ and since $\epsilon>0$ is small enough.
    Hence, 
    \begin{align*}
        \mathbb{P}\left(|N_G[S_r]|\le r\right)\le \exp\left(-\frac{\epsilon^2}{25}\cdot k\right).
    \end{align*}
    Overall,
    \begin{align}\label{middle}
        \sum_{r\in I}\mathbb{P}\left(|N_G[S_r]|\le r\right)\le 2\epsilon kd\cdot \exp\left(-\frac{\epsilon^2}{25}\cdot k\right).
    \end{align}
    
    Putting \eqref{start}, \eqref{end} and \eqref{middle} together, we have, by \eqref{maineq} that 
    \begin{align*}
        \mathbb{P}\left(\left|\bigcup_{u\in A} C_{G[V_p]}(u)\right|=k\right)
        &\le \exp\left(-\frac{\epsilon^2}{25}\cdot k\right) + 2\epsilon kd\cdot \exp\left(-\frac{\epsilon^2}{25}\cdot k\right) + \exp\left(-\frac{\epsilon^2}{25}\cdot k\right)\\
        &\le 3\epsilon kd\cdot \exp\left(-\frac{\epsilon^2}{25}\cdot k\right),
    \end{align*}
    completing the lemma.
\end{proof}

We will utilize Lemma \ref{nsc_general_general} in different ways throughout the proof of Theorems \ref{general} and \ref{n,d,lambda}. The following three corollaries are implementations of Lemma \ref{nsc_general_general}.

First, by taking $|A|=1$, we get the following two statements:

\begin{corollary}\label{nsc_pre}
    \textbf{Whp}, for every vertex $v\in V(G)$, 
        \begin{align*}
            |C_{G[V_p]}(v)|\notin \left[\frac{100}{\epsilon^2}\log n, \frac{t}{4d}\right].
        \end{align*}
\end{corollary}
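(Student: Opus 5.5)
We apply Lemma~\ref{nsc_general_general} with $A=\{v\}$ for each vertex $v\in V(G)$ and each integer $k\in\left[\frac{100}{\epsilon^2}\log n,\frac{t}{4d}\right]$, and then take a union bound over all pairs $(v,k)$. Since $|A|=1$, the union $\bigcup_{u\in A}C_{G[V_p]}(u)$ is exactly $C_{G[V_p]}(v)$. Moreover, $\frac{100}{\epsilon^2}\log n\ge\frac{50}{\epsilon^2}$ for $n$ large, so every such $k$ lies in the admissible range $\left[\frac{50}{\epsilon^2},\frac{t}{4d}\right]$ of the lemma. If the interval is empty, i.e.\ $\frac{t}{4d}<\frac{100}{\epsilon^2}\log n$, there is nothing to prove.

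For fixed $v$ and $k$, Lemma~\ref{nsc_general_general} gives
\begin{align*}
\mathbb{P}\left(|C_{G[V_p]}(v)|=k\right)\le 3\epsilon kd\exp\left(-\frac{\epsilon^2}{25}k\right).
\end{align*}
Summing over the at most $n$ vertices and over $k\ge k_0\coloneqq\frac{100}{\epsilon^2}\log n$, the failure probability is at most
\begin{align*}
n\sum_{k\ge k_0}3\epsilon kd\exp\left(-\frac{\epsilon^2}{25}k\right).
\end{align*}
Here we use $d\le n$ and $k\le t\le n$, so that $3\epsilon kd\le 3n^2$. The remaining sum $\sum_{k\ge k_0}\exp\left(-\frac{\epsilon^2}{25}k\right)$ is a geometric series and is $O\left(\epsilon^{-2}\exp\left(-\frac{\epsilon^2}{25}k_0\right)\right)=O\left(\epsilon^{-2}n^{-4}\right)$.

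The total is therefore $O\left(\epsilon^{-2}n^{3}\cdot n^{-4}\right)=O\left(\epsilon^{-2}/n\right)=o(1)$, which proves the corollary. Alternatively, one can absorb the factor $k$ into the exponential using $k e^{-\epsilon^2k/50}\le 1$ for large $k$, which gives the same conclusion.

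The only point needing care is that the exponent constant is large enough: $\frac{\epsilon^2}{25}\cdot\frac{100}{\epsilon^2}=4$, which beats the polynomial factors $n\cdot d\cdot k\le n^3$ from the union bound and the prefactor. Because the lemma's bound holds for an arbitrary fixed $v$ and $k$ with no dependence on the structure near $v$, no further obstacle arises.
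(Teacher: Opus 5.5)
Your proposal is correct and is essentially the paper's proof: apply Lemma~\ref{nsc_general_general} with $A=\{v\}$, take a union bound over all $n$ vertices and all $k$ in the interval, and use $\frac{\epsilon^2}{25}\cdot\frac{100}{\epsilon^2}\log n=4\log n$ to beat the polynomial prefactors. Your handling of the sum (bounding $3\epsilon kd\le 3n^2$ and summing the geometric tail) is in fact slightly more careful than the paper's one-line estimate, which bounds by a single term $3\epsilon\cdot\frac{t}{4d}\cdot d\cdot n^{-4}$ without accounting for the number of summands --- harmless there, since the total is still $o(1)$.
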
 

\begin{proof}
    Indeed, by the union bound and Lemma \ref{nsc_general_general},
    \begin{align*}
        \mathbb{P}\left(\exists_{v\in V(G)}\; \left|C_{G[V_p]}(v)\right|\in \left[\frac{100}{\epsilon^2}\log n, \frac{t}{4d}\right]\right)&\le n\cdot \sum_{k=\frac{100}{\epsilon^2}\log n}^{\frac{t}{4d}} 3\epsilon kd\cdot \exp\left(-\frac{\epsilon^2}{25}\cdot k\right)\\
        &\le n\cdot 3\epsilon \cdot \frac{t}{4d}\cdot d\cdot \frac{1}{n^4}=o(1),
    \end{align*}
    which establishes the corollary (the last equality holds since $t\le n$). 
\end{proof}

\begin{corollary}\label{nsc1_pre}
    Fix $v\in V(G)$. Then, \textbf{whp}, 
        \begin{align*}
            |C_{G[V_p]}(v)|\notin \left[\sqrt{d}, \frac{100}{\epsilon^2}\log n\right].
        \end{align*}
\end{corollary}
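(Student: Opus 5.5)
We apply Lemma~\ref{nsc_general_general} with $A=\{v\}$ to each size $k$ in the range, and then take a union bound over $k$ only. There is no union bound over vertices, since $v$ is fixed; this is why the statement is only for a fixed vertex. The lemma is available for every $k\in\left[\frac{50}{\epsilon^2},\frac{t}{4d}\right]$. We have $\sqrt d\ge \frac{50}{\epsilon^2}$ once $n$ (hence $d=\omega_n(1)$) is large. We will also use $\frac{100}{\epsilon^2}\log n\le \frac{t}{4d}$, which holds in the setting where the corollary is applied: under \ref{prop2A}, $t$ is polynomially larger than $d\log n$. If $\frac{100}{\epsilon^2}\log n$ exceeds $\frac{t}{4d}$, the part of the range above $\frac{t}{4d}$ is not covered by the lemma, and we take $\frac{t}{4d}\ge \frac{100}{\epsilon^2}\log n$ as part of the standing setting.

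The union bound gives
\begin{align*}
\mathbb{P}\left(|C_{G[V_p]}(v)|\in\left[\sqrt d,\tfrac{100}{\epsilon^2}\log n\right]\right)\le \sum_{k\ge \sqrt d} 3\epsilon kd\exp\left(-\frac{\epsilon^2}{25}k\right).
\end{align*}
The main point is that the prefactor $kd$ is absorbed by the exponential decay because $k\ge\sqrt d$. We have $d\le k^2$, so each term is at most $3\epsilon k^3\exp\left(-\frac{\epsilon^2}{25}k\right)$. For $k$ sufficiently large in terms of $\epsilon$ this is at most $\exp\left(-\frac{\epsilon^2}{50}k\right)$. The resulting geometric tail from $k=\sqrt d$ is $O_\epsilon\left(\exp\left(-\frac{\epsilon^2}{50}\sqrt d\right)\right)$, which is $o(1)$ since $d=\omega_n(1)$.

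The only real obstacle is the range check: the interval must lie inside $\left[\frac{50}{\epsilon^2},\frac{t}{4d}\right]$. The lower end holds because $d$ grows, and the upper end is the assumption on $t$ noted above. The rest is the tail-sum computation.
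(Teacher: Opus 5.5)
Your proposal is correct and is essentially the paper's proof: apply Lemma~\ref{nsc_general_general} with $A=\{v\}$, sum over $k\in[\sqrt d,\frac{100}{\epsilon^2}\log n]$, and let the decay $\exp(-\frac{\epsilon^2}{25}k)$ for $k\ge\sqrt d$ absorb the prefactor $3\epsilon kd$. The paper does this via $\log k\le \epsilon^3 k$ and $d\exp\left(-(\frac{\epsilon^2}{25}-\epsilon^3)\sqrt d\right)=o(1)$, while you use $d\le k^2$; this is a cosmetic difference. Your explicit check that $\frac{100}{\epsilon^2}\log n\le \frac{t}{4d}$ is left implicit in the paper, and it does hold under \ref{prop2A}, where the corollary is applied.
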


\begin{proof}
By Lemma \ref{nsc_general_general},
    \begin{align*}
        \mathbb{P}&\left( |C_{G[V_p]}(v)|\in \left[\sqrt{d},\frac{100}{\epsilon^2}\log n\right]\right)\le \sum_{k=\sqrt{d}}^{\frac{100}{\epsilon^2}\log n} 3\epsilon kd\cdot \exp\left(-\frac{\epsilon^2}{25}\cdot k\right)\\
        &=3\epsilon d \sum_{k=\sqrt{d}}^{\frac{100}{\epsilon^2}\log n}\exp\left(\log k-\frac{\epsilon^2}{25}\cdot k\right)\le 3\epsilon d \sum_{k=\sqrt{d}}^{\infty}\exp\left(\left(\epsilon^3-\frac{\epsilon^2}{25}\right)\cdot k\right)\\
        &\le 3\epsilon d\cdot \frac{\exp\left(\epsilon^3-\frac{\epsilon^2}{25}\right)^{\sqrt{d}}}{1-\exp\left(\epsilon^3-\frac{\epsilon^2}{25}\right)}=\frac{3\epsilon}{1-\exp\left(\epsilon^3-\frac{\epsilon^2}{25}\right)}\cdot d\exp\left(\left(\epsilon^3-\frac{\epsilon^2}{25}\right)\sqrt{d}\right)=o(1),
    \end{align*}
    where the second inequality and the last equality hold as $d=\omega(1)$.
\end{proof}

We also utilize Lemma \ref{nsc_general_general} for a specific type of sets -- balls around vertices of $G$.

\begin{corollary}\label{nsc_ball_pre}
    Let $r\ge 0$ be an integer. Then, \textbf{whp}, for every vertex $v\in V(G)$,
        \begin{align*}
            \left|\bigcup_{u\in B(v,r)} C_{G[V_p]}(u)\right|\notin \left[\frac{100}{\epsilon^2}\log n, \frac{t}{4d}\right].
        \end{align*}
\end{corollary}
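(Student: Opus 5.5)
The plan is to follow the proof of Corollary~\ref{nsc_pre}, applying Lemma~\ref{nsc_general_general} with $A=B(v,r)$ for each vertex $v\in V(G)$ and then taking a union bound over the $n$ choices of $v$ and over the values of $k$ in the forbidden interval.

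First we check that the lemma applies. The integer $r$ is fixed before the percolation, so for each $v$ the set $B(v,r)$ is deterministic and does not depend on $V_p$. Lemma~\ref{nsc_general_general} is stated for an arbitrary fixed set $A\subseteq V(G)$; it needs no bound on $|A|$. Inside its proof, the case $|A|\ge 2k/p$ is handled separately, and there the probability is at most $2\exp(-k/6)$, which is even smaller. The lower end of our interval, $\frac{100}{\epsilon^2}\log n$, is at least $\frac{50}{\epsilon^2}$ for $n$ large. Hence every integer $k\in\left[\frac{100}{\epsilon^2}\log n,\frac{t}{4d}\right]$ lies in the range allowed by the lemma. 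If this interval is empty, there is nothing to prove.

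For fixed $v$ and $k$ the lemma gives the bound $3\epsilon kd\exp\left(-\frac{\epsilon^2}{25}k\right)$. Since $k\ge\frac{100}{\epsilon^2}\log n$, we have $\exp\left(-\frac{\epsilon^2}{25}k\right)\le n^{-4}$. Also $k\le \frac{t}{4d}$ and $t\le n$ give $kd\le n/4$, so each term is at most $\frac{3\epsilon}{4}n^{-3}$. There are at most $\frac{t}{4d}\le n$ values of $k$ and $n$ choices of $v$. The total failure probability is therefore at most $n\cdot n\cdot O(n^{-3})=O(1/n)=o(1)$, which proves the corollary.

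The only point needing care is that Lemma~\ref{nsc_general_general} is used for a set $A$ that is arbitrary and possibly large. This is legitimate because the lemma quantifies over all fixed $A$ and already treats large $|A|$. I expect no real obstacle.
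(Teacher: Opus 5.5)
Your proposal is correct and follows essentially the same route as the paper: apply Lemma~\ref{nsc_general_general} with the deterministic set $A=B(v,r)$, then take a union bound over the $n$ vertices and over the values of $k$ in the interval. Your term-by-term estimate, $3\epsilon kd\,e^{-\epsilon^2k/25}\le \frac{3\epsilon}{4}n^{-3}$ summed over at most $n$ values of $k$, is a slightly cleaner way of bounding the same sum than the paper's one-line bound.
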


\begin{proof}
    Indeed, by the union bound and Lemma \ref{nsc_general_general},
    \begin{align*}
        \mathbb{P}\left(\exists_{v\in V(G)}\; \left|\bigcup_{u\in B(v,r)} C_{G[V_p]}(u)\right|\in \left[\frac{100}{\epsilon^2}\log n, \frac{t}{4d}\right]\right)&\le n\cdot \sum_{k=\frac{100}{\epsilon^2}\log n}^{\frac{t}{4d}} 3\epsilon kd\cdot \exp\left(-\frac{\epsilon^2}{25}\cdot k\right)\\
        &\le n\cdot 3\epsilon \cdot \frac{t}{4d}\cdot d\cdot \frac{1}{n^4}=o(1),
    \end{align*}
    which establishes the corollary (the last equality holds as $t\le n$). 
\end{proof}

\section{Proof of Theorem~\ref{general}} \label{Pgeneral}

Throughout the rest of this section, we assume that $c_1,c_2>0$ are some constants. We will prove Theorem~\ref{general} for $\alpha=1$ in order to present a cleaner argument. The modifications required to make it work for other $0<\alpha<1$ are presented in Section \ref{modi}. 

Let $\epsilon>0$ be a sufficiently small constant. We set $p=\frac{1+\epsilon}{d}$, and the value of $C>0$ and $c_3\coloneqq c_3(c_2,\epsilon)$ will be chosen sufficiently large. We let $G$ be a $d$-regular graph on $n$ vertices satisfying \ref{prop0A}, \ref{prop1A} and \ref{prop2A}. As mentioned after the statement of Theorem~\ref{general}, for assumption \ref{prop2A} to be meaningful, we assume that $d<n^{\gamma}$, for some $\gamma>0$.

Let us first briefly discuss the proof’s strategy. The key ideas of the proof are similar to those of \cite{zbMATH08098276,diskin2025componentslargesmalli}. We utilize a double-exposure/sprinkling argument \'{a} la Ajtai-Koml\'{o}s-Szemer\'{e}di \cite{zbMATH03769676}. Let $s\coloneqq s(\epsilon)>0$ be a small enough constant. Define $p_2=\frac{s}{d}$ and let $p_1$ be such that $(1-p_1)(1-p_2)=1-p$, noting that $p_1\geq \frac{1+\epsilon-s}{d}$. Notice that $G[V_p]$ has the same distribution as $G[V_{p_1}\cup V_{p_2}]$. We abbreviate $G_1:=G[V_{p_1}]$ and $G_2:=G[V_{p_1}\cup V_{p_2}]$. 

First, we use Corollary \ref{nsc_pre} to show that typically there are no components in $G_1$ (nor in $G_2$) whose size is between $\frac{100}{\epsilon^2}\log n$ and $d^5\log^C n$.
Then, we show in Lemma \ref{one_comp_2} that typically all components of size $\Omega(\log n)$ in $G_1$ merge after sprinkling with probability $p_2$.
We also show that when sprinkling with probability $p_2$, we typically do not ‘accidentally’ merge small components (of size $O(\log n)$) in $G_1$ into a largish component (of size $\Omega(\log n)$) in $G_2$ (Lemma \ref{big_intersect_W}). Finally, we show in Lemma \ref{large_comp_2} that the giant component in $G_2$ is of the `right' order (as stated in Theorem~\ref{general}).

\subsection{Components are Big or Small}

Our first goal is to apply the corollaries of Lemma \ref{nsc_general_general} on our graph $G$, showing that, \textbf{whp}, $G[V_p]$ contains no connected components of `intermediate' sizes. This is indeed the case, since our graph $G$ satisfies \ref{prop2A}. 
Let us state the claims here as they apply to our graph $G$. We formulate these arguments for $G[V_p]$, noting that it applies to $G[V_{p_1}]$ as well due to the fact that $dp_1>1$, which is still supercritical.

\begin{lemma}\label{nsc_general}
    Let $A\subseteq V(G)$, and fix $k\in \left[\frac{50}{\epsilon^2}, d^5\log^C n\right]$. The probability that
    \begin{align*}
        \left|\bigcup_{u\in A} C_{G[V_p]}(u)\right|=k
    \end{align*}
    is at most $3\epsilon kd\cdot \exp\left(-\frac{\epsilon^2}{25}\cdot k\right)$.
\end{lemma}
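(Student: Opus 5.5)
This lemma is a direct specialization of Lemma~\ref{nsc_general_general}, so the plan is to verify that its hypotheses hold for our graph $G$ with a suitable choice of $t$, and then read off the conclusion. The general setting preceding Lemma~\ref{nsc_general_general} requires that every $U\subseteq V(G)$ with $|U|\le t$ satisfies $e(U,U^c)\ge\left(1-\frac{\epsilon^2}{1000}\right)d|U|$, for some integer $t\le n$. Property~\ref{prop2A} gives exactly this inequality for all $U$ with $|U|\le 4d^{\frac{4}{\alpha}+2}\log^C n$. Since we are proving Theorem~\ref{general} with $\alpha=1$, this threshold is $4d^{6}\log^C n$, and I will set
\begin{align*}
    t\coloneqq 4d^{6}\log^C n .
\end{align*}

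Two bookkeeping points remain. First, we need $t\le n$. This holds because we assume $d<n^{\gamma}$ for a suitably small $\gamma>0$, as discussed after Theorem~\ref{general}. That discussion notes that \ref{prop2A} is only meaningful in this regime: if a set of size at most $n/2$ could have size $t$ with $t\ge n$, then \ref{prop2A} applied to a set of size about $n/2$ would force $e(U,U^c)$ near $d|U|$, which is impossible since $e(U,U^c)\le d|U^c|$ forces some slack. So $t\le n$ for $n$ large. Second, the admissible range of $k$ in Lemma~\ref{nsc_general_general} is $\left[\frac{50}{\epsilon^2},\frac{t}{4d}\right]$. With our choice,
\begin{align*}
    \frac{t}{4d}=d^{5}\log^C n,
\end{align*}
which is exactly the upper end of the interval in the present lemma. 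The edge probability is the same, $p=\frac{1+\epsilon}{d}$, and Lemma~\ref{nsc_general_general} uses $d=\omega(1)$, which holds here.

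Applying Lemma~\ref{nsc_general_general} with this $t$ and any $A\subseteq V(G)$ then gives the bound $3\epsilon kd\cdot\exp\left(-\frac{\epsilon^2}{25}k\right)$ for every $k\in\left[\frac{50}{\epsilon^2},d^5\log^C n\right]$, as required.

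There is no real obstacle. The only point needing care is matching the size threshold in \ref{prop2A} to $t$ so that $t/(4d)$ equals $d^5\log^C n$, and noting $t\le n$. For general $\alpha$, the same argument with $t=4d^{4/\alpha+2}\log^C n$ yields the larger range $k\le d^{4/\alpha+1}\log^C n$.
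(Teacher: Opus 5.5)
Your proposal is correct and follows the paper's route exactly: the lemma is Lemma~\ref{nsc_general_general} applied with $t=4d^{6}\log^C n$ from \ref{prop2A} (with $\alpha=1$), so that $t/(4d)=d^5\log^C n$. Your secondary justification of $t\le n$ via sets of size about $n/2$ does not work as written (in a bipartite $d$-regular graph one side $U$ has $e(U,U^c)=d|U|$), but it is unnecessary, since $d<n^{\gamma}$ suffices and in fact \ref{prop2A} applied to $U=V(G)$, where $e(U,U^c)=0$, already forces $t<n$.
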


\begin{corollary}\label{nsc}
    \textbf{Whp}, for every vertex $v\in V(G)$, 
        \begin{align*}
            |C_{G[V_p]}(v)|\notin \left[\frac{100}{\epsilon^2}\log n, d^5\log^C n\right].
        \end{align*}
\end{corollary}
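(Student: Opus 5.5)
This is a direct specialization of Corollary~\ref{nsc_pre} to our graph $G$. We take $t\coloneqq 4d^{6}\log^C n$, so that $\frac{t}{4d}=d^5\log^C n$, and we need the edge-expansion hypothesis of the general setting to hold up to this $t$. Assumption \ref{prop2A} with $\alpha=1$ gives $e_G(U,U^c)\ge (1-\frac{\epsilon^2}{1000})d|U|$ for every $U$ with $|U|\le 4d^{\frac{4}{\alpha}+2}\log^C n = 4d^6\log^C n = t$, which is exactly the required form.

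We also need $t\le n$. This holds because we assumed $d<n^{\gamma}$ for a suitably small $\gamma>0$, so $4d^6\log^C n\le n$ for large $n$. Thus $G$ fits the setting preceding Lemma~\ref{nsc_general_general} with this $t$, and Lemma~\ref{nsc_general} is precisely Lemma~\ref{nsc_general_general} read off for this $t$.

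It remains to apply the union bound as in the proof of Corollary~\ref{nsc_pre}:
\begin{align*}
\mathbb{P}\left(\exists_{v}\;|C_{G[V_p]}(v)|\in\left[\tfrac{100}{\epsilon^2}\log n, d^5\log^C n\right]\right)\le n\sum_{k\ge \frac{100}{\epsilon^2}\log n}^{d^5\log^C n}3\epsilon kd\, e^{-\frac{\epsilon^2}{25}k}.
\end{align*}
Every term satisfies $e^{-\frac{\epsilon^2}{25}k}\le n^{-4}$. The number of terms times the factor $kd$ is at most $d^{11}\log^{2C}n$, which is $o(n^3)$ under $d<n^{\gamma}$ with $\gamma$ small. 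Hence the total is $o(1)$.

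The only point needing care is this bookkeeping: that the exponent of $d$ in \ref{prop2A} at $\alpha=1$ supplies $t\ge 4d\cdot d^5\log^C n$, and that the implicit restriction $d<n^{\gamma}$ keeps $t\le n$. I expect no real obstacle beyond that.

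The same argument applies verbatim to $G[V_{p_1}]$. The proof of Lemma~\ref{nsc_general_general} uses only $dp>1$ with a constant margin $\epsilon'=dp_1-1\ge\epsilon-s$. Rerunning it with $\epsilon'$ in place of $\epsilon$, taking $s$ small, changes only the constants in the interval, which is why the paper notes the statement transfers to $G_1$.
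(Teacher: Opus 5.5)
Your proposal is correct and follows the paper's route. Corollary~\ref{nsc} is exactly Corollary~\ref{nsc_pre} instantiated with $t=4d^{6}\log^C n$ (so $t/(4d)=d^5\log^C n$), which \ref{prop2A} at $\alpha=1$ supplies, with the standing assumption $d<n^{\gamma}$ giving $t\le n$ and the same union bound over $v$ and $k$ closing the argument.
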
 

\begin{corollary}\label{nsc1}
    Fix $v\in V(G)$. Then, \textbf{whp}, 
        \begin{align*}
            |C_{G[V_p]}(v)|\notin \left[\sqrt{d}, \frac{100}{\epsilon^2}\log n\right].
        \end{align*}
\end{corollary}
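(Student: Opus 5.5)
This is Corollary~\ref{nsc1_pre} applied to our graph $G$, so the plan is to verify that $G$ fits the setting preceding Lemma~\ref{nsc_general_general} with a suitable $t$, and then repeat the short summation argument.

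\textbf{Step 1: choosing $t$.} By \ref{prop2A}, every $U$ with $|U|\le t:=4d^{\frac{4}{\alpha}+2}\log^C n$ satisfies $e_G(U,U^c)\ge(1-\frac{\epsilon^2}{1000})d|U|$. For $\alpha=1$ this gives $t=4d^6\log^C n$. Since $d<n^{\gamma}$, we have $t\le n$ for large $n$. Hence Lemma~\ref{nsc_general_general} (equivalently Lemma~\ref{nsc_general}) applies to every $k\in[\frac{50}{\epsilon^2},d^5\log^C n]$.

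\textbf{Step 2: the summation.} Take $A=\{v\}$. Since $d=\omega(1)$, we have $\sqrt d\ge \frac{50}{\epsilon^2}$ for large $n$, and trivially $\frac{100}{\epsilon^2}\log n\le d^5\log^C n$. So the whole range $[\sqrt d,\frac{100}{\epsilon^2}\log n]$ lies inside the admissible range, and a union bound gives
\begin{align*}
\mathbb{P}\left(|C_{G[V_p]}(v)|\in\left[\sqrt d,\tfrac{100}{\epsilon^2}\log n\right]\right)\le \sum_{k\ge\sqrt d}3\epsilon kd\exp\left(-\tfrac{\epsilon^2}{25}k\right).
\end{align*}
To bound the tail, absorb the factor $k$ using $\log k\le\epsilon^3 k$, valid for $k\ge\sqrt d$ and $d$ large. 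The sum is then at most a geometric series with ratio $e^{\epsilon^3-\epsilon^2/25}<1$. This yields the bound $O_\epsilon\big(d\,e^{-(\epsilon^2/25-\epsilon^3)\sqrt d}\big)$, which is $o(1)$ because $d=\omega(1)$.

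\textbf{Main obstacle.} The only point needing care is that the polynomial prefactor $d$ is beaten by the factor $e^{-\Theta(\epsilon^2\sqrt d)}$. This works because $\sqrt d\gg\log d$, and it is exactly why the lower end of the range is $\sqrt d$ rather than a constant.

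The same argument applies to $G[V_{p_1}]$, since $dp_1>1$ is still supercritical. There the constants change slightly, with $\epsilon$ replaced by $\epsilon-s$.
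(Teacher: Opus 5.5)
Your proposal is correct and follows the paper's proof essentially verbatim. You apply Lemma~\ref{nsc_general_general} (with $t$ supplied by \ref{prop2A}) to $A=\{v\}$, sum over $k\ge\sqrt d$, absorb the factor $k$ via $\log k\le\epsilon^3 k$, and bound the resulting geometric series by $O_\epsilon\bigl(d\,e^{-(\epsilon^2/25-\epsilon^3)\sqrt d}\bigr)=o(1)$; your explicit check that $[\sqrt d,\frac{100}{\epsilon^2}\log n]\subseteq[\frac{50}{\epsilon^2},\frac{t}{4d}]$ is a detail the paper leaves implicit.
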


\begin{corollary}\label{nsc_ball}
    Let $r\ge 0$ be an integer. Then, \textbf{whp}, for every vertex $v\in V(G)$,
        \begin{align*}
            \left|\bigcup_{u\in B(v,t)} C_{G[V_p]}(u)\right|\notin \left[\frac{100}{\epsilon^2}\log n, d^5\log^C n\right].
        \end{align*}
\end{corollary}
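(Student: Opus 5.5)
The plan is to obtain this statement as a direct instance of Corollary~\ref{nsc_ball_pre}, applied to our graph $G$ with the radius $r$ of the statement (the $t$ in $B(v,t)$ is meant to be the radius $r$). The Lemma~\ref{nsc_general_general} family of results was proved under a single hypothesis: every $U$ with $|U|\le t$ satisfies $e(U,U^c)\ge(1-\frac{\epsilon^2}{1000})d|U|$, for some integer $t\le n$. So the work is to choose $t$ so that assumption~\ref{prop2A} provides exactly this hypothesis, and so that the upper end $\frac{t}{4d}$ of the forbidden interval covers $d^5\log^C n$.

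First I set $t\coloneqq 4d^{6}\log^C n$. With $\alpha=1$, this is precisely the size threshold $4d^{\frac{4}{\alpha}+2}\log^C n$ in~\ref{prop2A}, so the edge-expansion hypothesis of the general setting holds for all $U$ with $|U|\le t$. I also need $t\le n$. This follows from the standing assumption $d<n^{\gamma}$ for small enough $\gamma>0$, since then $4d^6\log^C n\le n$ for large $n$; alternatively, if $t>n$, I simply replace $t$ by $n$ and note that the intended interval is then vacuous beyond $n$ anyway. The remaining conditions are that $G$ is $d$-regular, $\epsilon$ is sufficiently small, and $p=\frac{1+\epsilon}{d}$, and all three are exactly those of Theorem~\ref{general}. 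For $G[V_{p_1}]$ I would note that the proof of Lemma~\ref{nsc_general_general} only uses $p\ge\frac{1+\epsilon'}{d}$ with a slightly smaller constant $\epsilon'$, so it applies there too.

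Next I invoke Corollary~\ref{nsc_ball_pre}, which gives that \textbf{whp}, for every $v$, the union $\bigl|\bigcup_{u\in B(v,r)}C_{G[V_p]}(u)\bigr|$ lies outside $\bigl[\frac{100}{\epsilon^2}\log n,\frac{t}{4d}\bigr]$. Since $\frac{t}{4d}=d^5\log^C n$, this is exactly the claimed statement.

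For robustness, I would also recheck the union bound directly. There are $n$ centres and values $k\ge\frac{100}{\epsilon^2}\log n$, and each term is at most $3\epsilon kd\,e^{-\epsilon^2k/25}\le 3\epsilon kd\,n^{-4}$. Summing over $k\le d^5\log^C n$ then gives $O\bigl(n\cdot d^{11}\log^{2C}n\cdot n^{-4}\bigr)=o(1)$, again using $d<n^{\gamma}$. The only mild obstacle is this bookkeeping: ensuring $t\le n$ and that the Lemma~\ref{nsc_general_general} range $k\le\frac{t}{4d}$ is respected.
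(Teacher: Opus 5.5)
Your proposal is correct and matches the paper's (implicit) argument: you instantiate the general setting with $t=4d^{6}\log^C n$, which is exactly the threshold in \ref{prop2A} for $\alpha=1$, and Corollary~\ref{nsc_ball_pre} then excludes the interval up to $\frac{t}{4d}=d^5\log^C n$. One small point is that your fallback for $t>n$ is unnecessary and not quite right, since replacing $t$ by $n$ only excludes sizes up to $\frac{n}{4d}$; however, this case cannot occur, because applying \ref{prop2A} to $U=V(G)$ would give $0=e_G(V,\emptyset)\ge(1-\frac{\epsilon^2}{1000})dn$, so \ref{prop2A} itself forces $4d^6\log^C n<n$.
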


\subsection{Big components merge after sprinkling}\label{need p2}

In this subsection, we show that typically all components of size $\Omega(d^5\log^C n)$ in $G_1$ merge after sprinkling with probability $p_2$. This, together with Corollary \ref{nsc}, will imply that all components of size $\Omega(\log n)$ actually merge. To that end, we define the set of vertices in ‘large’ components in $G_1$ and in $G_2$:
\begin{align*}
    W_1=\bigg\{v\in V(G)\;\bigg|\; |C_{G_1}(v)|\geq \frac{100}{\epsilon^2}\log n\bigg\},
\end{align*}
and

\begin{align*}
    W_2=\bigg\{v\in V(G)\;\bigg|\; |C_{G_2}(v)|\geq \frac{100}{\epsilon^2}\log n\bigg\}.
\end{align*}
We note that $W_1\subseteq W_2$.

The following two lemmas show that, typically, every vertex $v\in V(G)$ is close to a `large' component from $W_1$.

\begin{lemma} \label{dist_2}
    For every $v\in V(G)$, $|B_G(v,2\log_d\log n+1)|\geq c_2c_3d\log n$.
\end{lemma}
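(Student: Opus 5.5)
We prove the bound by iterating the vertex-expansion property \ref{prop1A}, with $\alpha=1$ as fixed at the start of this section, so that it reads $|N_G(U)|\ge c_2 d|U|$ for all $|U|\le c_3\log n$. Set $r_0\coloneqq 2\log_d\log n+1$ and $B_i\coloneqq B_G(v,i)$. Since $N_G(B_i)\subseteq B_{i+1}$ and $B_i\cap N_G(B_i)=\emptyset$, we get $|B_{i+1}|= |B_i|+|N_G(B_i)|$.

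\emph{Growth phase.} We induct on $i$. As long as $|B_i|\le c_3\log n$, property \ref{prop1A} applies to $U=B_i$, giving
\[
|B_{i+1}|\ge |N_G(B_i)|\ge c_2 d|B_i|.
\]
Starting from $|B_0|=1$, this yields $|B_i|\ge (c_2d)^i$ for every $i$ up to the first index $i^*$ at which $|B_{i^*}|>c_3\log n$. Since $d=\omega(1)$, we have $c_2d\ge d^{1/2}$ for large $n$. Hence $(c_2 d)^{i}>c_3\log n$ once $i\ge 2\log_d(c_3\log n)$. This shows $i^*\le 2\log_d\log n+O(1/\log d)$, and in particular $i^*\le 2\log_d\log n$ for large $n$; the $\log_d c_3$ term is $o(1)$ because $d\to\infty$. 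If the constants are awkward at this point, we use instead the cruder bound $(c_2d)^i\ge d^{i/2}$ for $i\le r_0-1$, which is where the factor $2$ in $r_0$ gives slack.

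\emph{Final step.} At $i^*$ the ball is slightly too large to apply \ref{prop1A} directly. We therefore take a subset $U\subseteq B_{i^*}$ of size exactly $c_3\log n$ (ignoring rounding). Then
\[
|B_{i^*+1}|\ge |N_G(U)\setminus B_{i^*}|\ge |N_G(U)|-|B_{i^*}|.
\]
This subtraction could be lossy, so we take $U$ more carefully. Let $i'=i^*-1$, so that $|B_{i'}|\le c_3\log n$, and enlarge $B_{i'}$ by adding vertices of $B_{i^*}\setminus B_{i'}$ until the set $U$ reaches size exactly $c_3\log n$; this is possible because $|B_{i^*}|>c_3\log n$. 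Then $U\subseteq B_{i^*}$ and $N_G(U)\subseteq B_{i^*+1}$, so
\[
|B_{i^*+1}|\ge |U|+|N_G(U)|\ge c_2d\cdot c_3\log n=c_2c_3 d\log n.
\]
Since $i^*+1\le r_0$ and balls are monotone in the radius, this gives the claim. If no such $i^*\le r_0-1$ existed, the growth phase would already give $|B_{r_0}|\ge (c_2d)^{r_0}$, which exceeds $c_2c_3d\log n$ by the same computation.

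\emph{Main obstacle.} The delicate point is the constant bookkeeping: we must check that $i^*+1\le 2\log_d\log n+1$ uniformly. This requires $(c_2d)^{2\log_d\log n}\ge c_3\log n$, which amounts to $\log^2 n\cdot c_2^{2\log_d\log n}\ge c_3\log n$. That inequality holds for large $n$ because $c_2^{2\log_d\log n}=(\log n)^{2\log c_2/\log d}=(\log n)^{o(1)}$.
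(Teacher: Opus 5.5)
Your proposal is correct and follows essentially the same route as the paper. You iterate \ref{prop1A} on the balls $B_G(v,i)$ until they exceed $c_3\log n$, which happens by radius $\log_{c_2d}(c_3\log n)\le 2\log_d\log n$, and then apply \ref{prop1A} once more to a subset of size $c_3\log n$ of that ball, whose neighborhood lies in the next ball. Your special choice of $U\supseteq B_G(v,i^*-1)$ and the subtraction detour are unnecessary, since $N_G(U)\subseteq B_G(v,i^*+1)$ for any $U\subseteq B_G(v,i^*)$.
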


\begin{proof}
    Let $v\in V(G)$ and $r\in \mathbb{N}$. Denote $U=B_G(v,r)$. Note that $U\cup N_G(U)=B_G(v,r+1)$. 
    By \ref{prop1A}, $$|B_G(v,r+1)|\geq \min \Bigg\{c_3\log n, c_2d|B_G(v,r)|\Bigg\}.$$
    Therefore, for $r_0=\log_{c_2d}(c_3\log n)\leq 2\log_d\log n$, we have $|B_G(v,r_0)|\geq c_3\log n$, by the definition of $r_0$. Furthermore, since we can always find a subset of size $c_3\log n$ in $B_G(v,r_0)$, which contains $v$, then again by \ref{prop1A}, $|B_G(v,r_0+1)|\ge c_2c_3d\log n$.
\end{proof}

\begin{lemma} \label{W_2}
    \textbf{Whp}, for every $v\in V(G)$, $dist_G(v,W_1)\leq 2\log_d\log n+1$.
\end{lemma}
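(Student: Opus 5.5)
Fix $v\in V(G)$ and set $r=2\log_d\log n+1$. By Lemma~\ref{dist_2}, the ball $B\coloneqq B_G(v,r)$ has at least $c_2c_3d\log n$ vertices. The plan is to show that, with probability $1-o(1/n)$, some vertex of $B$ lies in $W_1$, and then take a union bound over all $v$. We cannot hope that every vertex of $B$ independently has a large component, since the components are correlated. Instead we pick a subset $B'\subseteq B$ of $m\coloneqq c_2c_3\log n$ vertices (or $m\ge c_2c_3\log n/\text{poly}$, see below) that are pairwise far apart in $G$, and we expose their components in $G_1$ one at a time with the modified BFS.

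The core step is as follows. Run the BFS on $G$ with percolation probability $p_1$, starting from $u_1\in B$. By Corollary~\ref{nsc} (applied to $G_1$, which is supercritical since $dp_1>1$), whp every component is either of size below $\frac{100}{\epsilon^2}\log n$ or at least $d^5\log^C n$. If the exploration from $u_1$ reaches size $\frac{100}{\epsilon^2}\log n$, then $u_1\in W_1$ and we are done. Otherwise it stops with a component of size $<\frac{100}{\epsilon^2}\log n$, and in fact (by Corollary~\ref{nsc1}-type reasoning, or just by Lemma~\ref{nsc_general} with $A=\{u_1\}$) it is typically of size $<\sqrt d$. In either case the exploration has queried only $N_G[C]$, which has at most $d\cdot\frac{100}{\epsilon^2}\log n$ vertices. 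We then move to the next vertex $u_2\in B$ not yet queried, and continue.

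Each fresh start $u_i$ has conditional probability bounded below by a constant $\theta=\theta(\epsilon)>0$ of growing a large component. To see this, use the good/bad-vertex argument from the proof of Lemma~\ref{nsc_general_general}: the total set queried so far has size $O(d\log n\cdot i)\le d^5\log^C n$, so by \ref{prop2A} almost all vertices still have almost all of their neighbours unexplored. The exploration from $u_i$ therefore dominates a supercritical branching process with offspring $\mathrm{Bin}((1-\epsilon/20)d,p_1)$, whose survival probability is $\Theta(\epsilon)$. Survival to size $\frac{100}{\epsilon^2}\log n$ then places $u_i\in W_1$.

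Over $k\coloneqq K\log n/\epsilon$ independent-ish trials, the failure probability is at most $(1-\theta)^k\le n^{-2}$ for a suitable constant $K$. This requires $B$ to contain enough unqueried starting vertices, namely $|B|\ge k\cdot d\cdot\frac{100}{\epsilon^2}\log n$ roughly, or more cleverly $|B|\gtrsim k\sqrt d$ using that failed components typically have size $<\sqrt d$. This is exactly why $c_3$ is chosen large in terms of $\epsilon$ and $c_2$: we need $c_2c_3d\log n\ge k\cdot d\cdot O(1)$, where the $O(1)$ bound on the neighbourhood of small failed components comes from the fact that failed explorations are typically tiny. A union bound over the $n$ vertices $v$ then finishes the proof.

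The main obstacle is making the constant-probability-of-survival claim rigorous under conditioning on previous failures. Specifically, we must show that the previously queried set, together with \ref{prop2A}, keeps the process supercritical. We must also control the sizes of the failed components well enough to fit about $\log n$ trials inside $B$. I would handle the latter by noting that a failed trial queries at most $d\cdot|C|$ vertices, and that $\sum|C|$ over failed trials is stochastically dominated by a sum of i.i.d. subcritical-like totals with exponential tails, hence is $O(k)$ with probability $1-n^{-2}$.
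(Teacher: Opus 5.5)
\textbf{There is a genuine gap at your central step.} You flag it as the main obstacle but do not resolve it: the claim that each fresh start $u_i$, conditionally on the earlier failures, reaches size $\frac{100}{\epsilon^2}\log n$ with probability at least a constant $\theta(\epsilon)$.

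As stated, this is already off. In site percolation an unqueried $u_i$ lies in $V_{p_1}$ only with probability $p_1\approx\frac{1+\epsilon}{d}$. So the per-start success probability is at most of order $\epsilon/d$, you need order $d\log n/\epsilon$ starts rather than $K\log n/\epsilon$, and the bookkeeping must be redone.

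More seriously, domination by a $\mathrm{Bin}((1-\frac{\epsilon}{20})d,p_1)$ branching process does not follow from \ref{prop2A}.
\begin{itemize}
\item The good/bad argument in Lemma~\ref{nsc_general_general} only shows that at most an $\frac{\epsilon}{10}$-fraction of the roughly $k/p$ queried vertices are bad, and then applies a Chernoff bound over all those queries. It says nothing about the first few generations of one particular exploration, which is what a survival lower bound needs.
\item Here $u_i$ lies in the same ball $B$ from which all previous explorations were launched. Property \ref{prop2A} only bounds the number of vertices with at least $\frac{\epsilon}{20}d$ queried neighbours by about $\frac{\epsilon}{100}$ times the number of queried vertices. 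That number is far larger than $d$, so this set may contain the whole neighbourhood of $u_i$.
\item The ``exponential tails'' you invoke for failed components are likewise unsupported. Property \ref{prop1A} gives only expansion $c_2d$ with $c_2$ possibly small. The bound $3\epsilon kd\exp(-\epsilon^2k/25)$ of Lemma~\ref{nsc_general_general} is vacuous for $k\lesssim\epsilon^{-2}\log d$.
\item With only the trivial bound $\frac{100}{\epsilon^2}\log n$ per failed component, the queried set can reach size of order $d\log^2 n$. It can then swallow $B$, which is only guaranteed to have $c_2c_3d\log n$ vertices.
\end{itemize}

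\textbf{The missing idea is that no sequential analysis is needed, because Lemma~\ref{nsc_general_general} is stated for an arbitrary set $A$.} Apply it, via Corollary~\ref{nsc_ball}, to $A=B(v,r)$ itself.
\begin{itemize}
\item By Claim~\ref{cher} and $|B(v,r)|\ge c_2c_3d\log n$, with probability $1-o(1/n)$ at least $\frac{c_2c_3}{2}\log n\ge\frac{100}{\epsilon^2}\log n$ vertices of the ball lie in $V_{p_1}$. This is exactly where $c_3$ large is used.
\item Hence $\bigl|\bigcup_{u\in B(v,r)}C_{G_1}(u)\bigr|\ge\frac{100}{\epsilon^2}\log n$, so \textbf{whp} this union has size at least $d^5\log^C n$.
\item Since $|B(v,r)|\le\sum_{i\le r}d^i\le 2d\log^2 n$, pigeonhole gives some $u\in B(v,r)$ with $|C_{G_1}(u)|\ge\frac{d^5\log^C n}{2d\log^2 n}\ge\frac{100}{\epsilon^2}\log n$, that is, $u\in W_1$.
\item A union bound over $v$ finishes the proof.
\end{itemize}
This route needs no survival-probability estimate, no control of failed components, and no conditioning on earlier explorations.
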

\begin{proof}
    By Lemma \ref{dist_2}, \textbf{whp}, for every $v\in V(G)$, $|B(v,2\log_d\log n+1)|\geq c_2c_3d\log n$. Further observe that on the other hand,
    \begin{align*}
        |B(v,2\log_d\log n+1)|\le \sum_{i=0}^{2\log_d\log n+1} d^i=\frac{d^{2\log_d\log n+2}-1}{d-1}\le 2d\log^2 n.
    \end{align*}
    
    We now show that \textbf{whp}, for every $v\in V(G)$, $B(v,2\log_d\log n+1)\cap W_1\neq \emptyset$. Indeed, by Corollary \ref{nsc_ball}, with $r=2\log_d\log n+1$, \textbf{whp},
    \begin{align*}
        \left|\bigcup_{u\in B(v,2\log_d\log n+1)}C_{G_1}(u)\right|\notin \left[\frac{100}{\epsilon^2}\cdot\log n,d^5\log^C n\right].
    \end{align*}
    Furthermore, since $|B(v,2\log_d\log n+1)|\geq c_2c_3d\log n$, and since $c_3=c_3(c_2,\epsilon)$ is large enough, then by Claim \ref{cher}, \textbf{whp}, the number of vertices which remains from $B(v,2\log_d\log n+1)$ after site percolation is at least $c_2c_3\log n/2\ge \frac{100}{\epsilon^2}\cdot\log n$. Hence, \textbf{whp}, 
    \begin{align*}
        \left|\bigcup_{u\in B(v,2\log_d\log n+1)}C_{G_1}(u)\right|\ge d^5\log^C n.
    \end{align*}
    Then, since $|B(v,4\log_d\log n+1)|\le 2d\log^2 n$, by the pigeonhole principle, there exists $u\in B(v,2\log_d\log n+1)$, such that $\left|C_{G_1}(u)\right|\ge \frac{d^5\log^C n}{2d\log^2 n}\ge \frac{100}{\epsilon^2}\log n$. As such, \textbf{whp} there is a component of size at least $\frac{100}{\epsilon^2}\log n$ in $G_1$ at distance at most $2\log_d \log n+1$ from every vertex $v\in V(G)$. By the definition of $W_1$, we establish the lemma.
\end{proof}

Finally, since each vertex $v\in V(G)$ is at distance at most $2\log_d\log n+1$ from $W_1$, we are now ready to argue that after sprinkling with probability $p_2$, \textbf{whp} all components of $W_1$ merge.

\begin{lemma} \label{one_comp_2}
    \textbf{Whp} there is a component $K$ in $G_2$ such that $W_1\subseteq V(K)$.
\end{lemma}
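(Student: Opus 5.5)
We will follow the standard sprinkling argument in the style of Ajtai--Koml\'os--Szemer\'edi, adapted to site percolation. We first expose $G_1=G[V_{p_1}]$ and condition on the typical events established so far. By Corollary~\ref{nsc}, applied to $G_1$, every component of $G_1$ meeting $W_1$ has size at least $d^5\log^C n$. By Lemma~\ref{W_2}, every vertex of $G$ is within distance $r_0\coloneqq 2\log_d\log n+1$ of $W_1$. In particular, the number of components of $G_1$ in $W_1$ is at most $n/(d^5\log^C n)$.

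Suppose, aiming for a contradiction, that after sprinkling the components of $W_1$ do not all lie in one component of $G_2$. Then there is a partition $W_1=A\cup B$ into unions of $G_1$-components, with $|A|\le |B|$ say, such that no path in $G_2$ joins $A$ to $B$. The first step is to build, from \ref{prop0A} and the covering property of Lemma~\ref{W_2}, many vertex-disjoint short connections between $A$ and $B$ in $G$.

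For this, assign each vertex of $G$ to a nearest vertex of $W_1$ (at distance at most $r_0$). This splits $V(G)$ into $V_A\cup V_B$ with $|V_A|\le n/2$ after possibly swapping. By \ref{prop0A}, $e_G(V_A,V_B)\ge c_1|V_A|\ge c_1 |A|$. Each edge between $V_A$ and $V_B$ yields an $A$--$B$ path of length at most $2r_0+1=O(\log_d\log n)$. Since $G$ is $d$-regular, each vertex lies on at most $d^{O(r_0)}=\log^{O(1)}n$ such paths. A greedy selection therefore gives at least $c_1|A|/\log^{K}n$ internally vertex-disjoint paths, for some constant $K$.

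Each such path has at most $2r_0$ interior vertices not in $W_1$, and all of them must land in $V_{p_2}$ for the path to survive. This happens with probability at least $(s/d)^{2r_0}=\exp(-O(\log\log n))=\log^{-O(1)}n$, using $d^{r_0}=\log^2 n$. The point needing care is that vertices already in $V_{p_1}$ on the path also count as present. We only need a lower bound, so the estimate above suffices. Disjointness gives independence, so the probability that no path survives is at most $\exp(-c_1|A|\log^{-K'}n)$. Since $|A|\ge d^5\log^C n$, choosing $C$ larger than $K'+O(1)$ makes this at most $\exp(-d^5\log^{C-K'}n)$.

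The final step, which we expect to be the main obstacle, is the union bound over partitions. The number of partitions with $|A|=a$ is at most $\binom{n/(d^5\log^Cn)}{a/(d^5\log^C n)}\le \exp\bigl(\frac{a}{d^5\log^C n}\log n\bigr)$. This must be beaten by $\exp(-a\log^{-K'}n)$. That works once $d^5\log^C n\ge \log^{K'+2}n$, which holds when $C$ is chosen large. The delicate issues are controlling the polylogarithmic loss in the number of disjoint paths and handling the dependence of $V_A$ on the random $W_1$. We handle the latter by conditioning on $G_1$, so that $W_1$ and the partition are fixed before $V_{p_2}$ is sampled.
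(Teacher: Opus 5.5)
Your strategy is the same as the paper's: condition on $G_1$, invoke Corollary~\ref{nsc} and Lemma~\ref{W_2}, split $V(G)$ into an $A$-side and a $B$-side by proximity to $W_1$, and obtain at least $c_1|A|$ crossing edges from \ref{prop0A}. You then extract internally disjoint $A$--$B$ paths of length at most $2r_0+1$ and take a union bound over component-respecting partitions.

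The quantitative step, however, is wrong as written. With $r_0=2\log_d\log n+1$ one has $d^{r_0}=d\log^2 n$, not $\log^2 n$. Consequently $(s/d)^{2r_0}=s^{2r_0}/(d^2\log^4 n)$. The number of candidate paths through a vertex is $d^{\Theta(1)}\log^{O(1)}n$ rather than $\log^{O(1)}n$; the paper bounds it by $d^{2r_0+1}=d^3\log^4 n$. Since $d$ is only assumed to be $\omega(1)$ and may be polynomial in $n$, these powers of $d$ cannot be absorbed into $\log^{O(1)}n$. So your claims that each path survives with probability $\log^{-O(1)}n$ and that the failure probability is $\exp(-c_1|A|\log^{-K'}n)$ are false in general.

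The argument is repairable, and the repair is where the content of the lemma lies. Keeping track of $d$, you get at least $c_1a/(d^3\log^{4+o(1)}n)$ disjoint paths. Each survives with probability at least $s^2\log^{-o(1)}n/(d^2\log^4 n)$, using $s^{4\log_d\log n}=\log^{-o(1)}n$ since $d=\omega(1)$. So the probability that none survives is at most $\exp\bigl(-c_1s^2a/(d^5\log^{8+o(1)}n)\bigr)$. The number of partitions with $|A|=a$ is at most $\exp\bigl(a\log n/(d^5\log^C n)\bigr)$. The factor $d^5$ cancels, and a large constant $C$ (e.g.\ $C>10$) closes the union bound.

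Thus the $d^5$ in the guarantee that every component in $W_1$ has size at least $d^5\log^C n$ is not a harmless bonus, as your last paragraph suggests. That guarantee comes from Corollary~\ref{nsc}, i.e.\ from \ref{prop2A} holding up to size $4d^6\log^C n$. It is exactly what pays for the $d^5$ lost in counting paths and in their survival probability. With components of size only $\log^C n$, this union bound would not close for $d$ polynomial in $n$, and your accounting would not notice.
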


\begin{proof}
    Applying Corollary \ref{nsc} yields that \textbf{whp}, all components in $G_1[W_1]$ are of size at least $d^5\log^C n$. Further, by Lemma \ref{W_2}, \textbf{whp} every $v\in V(G)$ is at distance at most $2\log_d\log n+1$ from some $w\in W$. We continue assuming these properties hold deterministically.
    It suffices to show that \textbf{whp} for every partition of $W_1$ into two $G_1$-component-respecting parts $A$ and $B$, with $a = |A| \leq |B|$, there exists a path in $G_2$ between $A$ and $B$. First, we bound from below the number of vertex-disjoint paths between $A$ and $B$. To that end, let $A'$ be $A$ together with all the vertices in $V (G)\backslash B$ that are at distance at most $2\log_d\log n+1$ from some vertex in $A$. Similarly, let $B'$ be $B$ together with all the vertices in $V(G)\backslash A'$ that are at distance at most $2\log_d\log n+1$ from some vertex in $B$. Note that $V(G)= A' \sqcup B'$, and thus by \ref{prop0A}, $e_G(A',B')=e_G(A',V(G)\backslash A')\geq c_1\cdot \min\{|A'|,|V(G)\backslash A'|\}\ge c_1|A|$. In this way we built at least $c_1a$ paths between $A$ and $B$, passing through $A'$ and $B'$ and using the edges in $e_G(A',B')$, of length at most $2(2\log_d\log n+1)+1=4\log_d\log n+3$. As $G$ is $d$-regular, we can find at least $\frac{c_1a}{d^{4 \log_d \log n+3}}=\frac{c_1a}{d^3\log^4 n}$ vertex-disjoint (disjoint outside of $A$ and $B$) paths (in $G$) of length at most $4 \log_d \log n+3$ between $A$ and $B$. Notice that each path contains $4\log_d\log n+3+1=4\log_d\log n+4$ vertices, and that the ends of each path are in $W_1$, i.e., two of these vertices are already in $V(G_1)$. Hence, the probability that such path falls into $G_2$ is at least $p_2^{4\log_d\log n+2}$.
    
    We now bound from above the number of component-respecting partitions of $W_1$ into $W_1= A \sqcup B$ with $|A| = a$. Since every component of $W_1$ is of size at least $d^5\log^C n$ and since $W_1= A \sqcup B$ is a component-respecting partition, given that $|A| = a$ there are at most 
    \begin{align*}
        \sum_{i=1}^{a/(d^5\log^C n)}\binom{n/(d^5\log^C n)}{i}\le\left(\frac{en}{a}\right)^\frac{a}{d^5\log^C n}\leq n^\frac{a}{d^5\log^C n}
    \end{align*}
    ways to choose $A$ (and hence the partition). Thus, by the union bound, the probability that there exists such a partition $(A,B)$ without a path in $G_2$ between $A$ and $B$ is at most
    \begin{align*}
        \sum_{a=d^5\log^C n}^{n/2}& n^{\frac{a}{d^5\log^C n}}\left(1-p_2^{4\log_d\log n+2}\right)^{\frac{c_1a}{d^3\log^4 n}}\le \sum_{a=d^5\log^C n}^{n/2} n^{\frac{a}{d^5\log^C n}} \exp \left(-\frac{c_1a}{d^3\log^4 n}p_2^{4\log_d\log n+2}\right)\\
        &\le \sum_{a=d^5\log^C n}^{n/2} \exp \left[a\left(\frac{\log n}{d^5\log^C n}-\frac{c_1}{d^5\log^8 n}s^{4\log_d\log n+2}\right)\right].
    \end{align*}
    Notice that since $d=\omega(1)$, we have $s^{4\log_d\log n}=(\log n)^{4\frac{\log s}{\log d}}\ge \frac{1}{\log n}$, and hence:
    \begin{align*}
        \sum_{a=d^5\log^C n}^{n/2}& \exp \left[a\left(\frac{\log n}{d^5\log^C n}-\frac{c_1}{d^5\log^8 n}s^{4\log_d\log n+2}\right)\right]\\
        &\le n\cdot \exp \left[d^5\log^C n\left(\frac{\log n}{d^5\log^C n}-\frac{c_1s^2}{d^5\log^9 n}\right)\right]\\
        &\le n\cdot \exp \left(\log n-c_1s^2\cdot \log^{C-9} n\right)=o(1),
    \end{align*}
    where the last equality follows from an appropriate choice of $C$. It should be noted that the power $6$ (of $d$) in \ref{prop1A} plays a role in this calculation.
\end{proof}

\subsection{Sprinkling does not create new large components}

By Corollary \ref{nsc}, \textbf{whp} there are no components in $G_1$, nor in $G_2$, of size is between $\frac{100}{\epsilon^2}\log n$ and $d^5\log^C n$. Further, using Corollary \ref{nsc} and Lemma \ref{one_comp_2} we can derive that \textbf{whp} all components in $G_1$ of size was at least $\frac{100}{\epsilon^2}\log n$ merge into a unique component in $G_2$. Note, however, that we still need to rule out the existence of components outside of $W_1$ of size is at least $d^5\log^C n$ in $G_2$.

\begin{lemma} \label{big_intersect_W}
    \textbf{Whp}, there is no component in $G_2$ of size at least $d^5\log^C n$, which does not intersect $W_1$.
\end{lemma}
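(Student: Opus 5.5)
We need to show that \textbf{whp} no component of $G_2$ of size at least $d^5\log^C n$ avoids $W_1$. The plan follows the standard route from the bond-percolation setting: any such component $K$ is made of $G_1$-components of size $O(\log n)$ glued by sprinkled vertices. It must therefore contain a connected piece of intermediate size that consists mostly of small $G_1$-components joined through few $V_{p_2}$-vertices. We will show this configuration is unlikely.

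\textbf{Step 1 (reduce to a tree of prescribed size).} By Corollary~\ref{nsc} applied to $G_2$ (whose parameter $p$ is still supercritical and below $(1+\epsilon)/d$), \textbf{whp} $G_2$ has no component with size in $\left[\frac{100}{\epsilon^2}\log n, d^5\log^C n\right]$. So if $K$ exists, its size exceeds $d^5\log^C n$. Consider a spanning tree of $K$ and extract a connected subtree $T$ with $k\coloneqq |V(T)|$ in $[d^4, 2d^4]$, or some other fixed polynomial window below $d^5\log^C n$. Since $V(T)\cap W_1=\emptyset$, every $G_1$-component meeting $T$ has size at most $\frac{100}{\epsilon^2}\log n$.

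\textbf{Step 2 (count sprinkled vertices).} Let $X=V(T)\cap (V_{p_2}\setminus V_{p_1})$ be the vertices added by sprinkling. The set $V(T)$ is a connected set of $k$ vertices of $G$, all lying in $V_p$. We bound the expected number of pairs consisting of a connected $k$-set $U$ in $G$ containing a given root, together with the event that $U\subseteq V_{p_1}\cup V_{p_2}$. Since $p_2=s/d$, and since a $\mathrm{Bin}(k,p_2/p)$ split is concentrated, we expect about $\frac{s}{1+\epsilon}k$ vertices of $U$ to be sprinkled. The key point is that $U\setminus X$ must split into $G_1$-components that are each small and do not touch $W_1$.

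\textbf{Step 3 (the main difficulty).} Counting all connected $k$-sets, of which there are roughly $(ed)^k$, and multiplying by $p^k$ gives $(e(1+\epsilon))^k$. This is too lossy, so naive counting fails. Instead, we argue through the exploration process, as in Lemma~\ref{nsc_general_general}. Run the BFS on $G_2$ from a vertex $v\notin W_1$ and stop once $k$ vertices of $V_p$ are found. By the argument of Lemma~\ref{nsc_general_general}, with failure probability $\exp(-\Omega(\epsilon^2 k))$ the explored set $S$ satisfies $|N_G(S)|\ge(1-o(1))d|S|$. The set $N_G[S]$ has size at most $4dk\le t$, so \ref{prop2A} applies. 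Now examine $G_1$ restricted to $S$: its components are unions of explored vertices minus the sprinkled set. Each $G_1$-component of $S$ lies in a component of $G_1$ that does not meet $W_1$, so it has size below $\frac{100}{\epsilon^2}\log n$. Meanwhile, the sprinkled vertices form a $\mathrm{Bin}(|S|,p_2/p)$-like set, independent of the $G_1$-structure given $S$. I would then use Lemma~\ref{nsc_general} with $A$ equal to the $\le \frac{s}{1+\epsilon}k(1+o(1))$ sprinkled vertices in $S$. The union of the $G_1$-components of their neighbourhoods must have size about $k$. I expect this to be the hardest step, because it requires decoupling $G_1$ from the sprinkling. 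The idea is to show that the union of the $G_1$-components touching $N[A]$ is either of size less than $\frac{100}{\epsilon^2}\log n$ or larger than $d^5\log^C n$. In the latter case some such component is large by pigeonhole, exactly as in Lemma~\ref{W_2}, which contradicts disjointness from $W_1$.

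\textbf{Step 4 (union bound).} Sum the failure probabilities $n\cdot 3\epsilon kd\,e^{-\epsilon^2k/25}$ over starting vertices and over $k$ in the window. Since $k\ge d^4\gg\log n$ is not guaranteed when $d$ is small, choose the window as $k\in[\log^{C/2}n, 2\log^{C/2}n]$ instead, which keeps the bound $o(1)$.
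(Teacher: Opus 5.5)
Your proposal has a genuine gap, and it sits exactly at the step you flag as the hardest. Lemma~\ref{nsc_general} (like Lemma~\ref{nsc_general_general}) is a statement about a \emph{fixed} set $A$. Its proof puts $A$ first in the BFS order and answers every query with a fresh, independent coin.

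The set you want to plug in, $A=S\setminus V_{p_1}$, is the set of sprinkled vertices among the first $k$ explored vertices of $C_{G_2}(v)$. It is a function of both $V_{p_1}$ and $V_{p_2}$, and it was selected precisely because the $G_1$-pieces around $N_G[A]$ glue together into a connected set of size about $k$. So the bound $3\epsilon kd\,e^{-\epsilon^2k/25}$ does not apply to it. Your union bound in Step 4 ranges only over the starting vertex and over $k$, so it never accounts for the choice of $A$. Also, given $S$, the sprinkled labels are not independent of the $G_1$-structure on $S$; they determine it.

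A repair by union bounding over candidate sets $A$, or over pairs $(S,A)$, brings back an entropy factor exponential in $k$ (e.g.\ $(ed)^k2^k$ for a connected $k$-set together with a subset). That is exactly the lossy counting you already ruled out. Using the independence of $V_{p_2}$ from $V_{p_1}$ to gain a factor $p_2^{|A|}$ does not save it either. The candidate sets $A$ are only connected through gaps of length up to $\frac{100}{\epsilon^2}\log n$, and there are far more of them than $p_2^{-|A|}$. So the dichotomy you need before the pigeonhole step is asserted rather than proved. The pigeonhole itself would be fine if that dichotomy held.

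The missing idea is to avoid any decoupling by changing the exploration. The paper explores $C_{G_2}(v)$ with a hybrid BFS:
\begin{itemize}
\item while fewer than $\log^2 n$ vertices have been collected, a queried vertex is accepted if it lies in $V_{p_1}\cup V_{p_2}$, using both coin sequences;
\item afterwards, a queried vertex is accepted only if it lies in $V_{p_1}$.
\end{itemize}
Every query is still answered by fresh independent coins, with success probability between $p_1$ and $p$, and both are supercritical. Hence the argument of Lemma~\ref{nsc_general} goes through for this process. It shows the process halts with $|S|$ in an intermediate window (from $\log^2 n$ up to a size well below $d^5\log^C n$) only with probability $o(1/n)$.

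On the other hand, suppose $|C_{G_2}(v)|\ge \log^2 n$ and $C_{G_2}(v)\cap W_1=\emptyset$. Then the second phase only uncovers $G_1$-components attached to the first $\log^2 n$ vertices, each of size below $\frac{100}{\epsilon^2}\log n$. So the process is forced to halt in exactly that window. A union bound over $v$ finishes the proof, with no subtree extraction, no counting of sprinkled vertices, and no pigeonhole step.
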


\begin{proof}
    For $v\in V(G)$, let $A_v$ be the event that $C_{G_2}(v)\cap W_1=\emptyset$ and $|C_{G_2}(v)|\geq \log^2 n$. By Corollary \ref{nsc}, we have that the probability of an event violating the statement of the Lemma is at most $\mathbb{P}\left[\cup_{v\in V(G)} A_v\right]+o(1)$.
    
    Let $(X_i)_{i=1}^{|V|}$ be a sequence of i.i.d. $Bernoulli(p_1)$ random variables, and let $(Y_i)_{i=1}^{|V|}$ be a sequence of i.i.d. $Bernoulli(p_2)$ random variables. We will run a BFS-type algorithm, very similar to the one in Section \ref{pre}, however, we will utilize both sequences of random variables.
    
    Let $\sigma$ be an arbitrary ordering of $V(G)$. Now, as long as $|S\cup U|\leq \log^2 n$, when we query the $t$-th vertex, we move it into $U$ if $X_t = 1$ or $Y_t = 1$, and move it into $J$ only if $X_t = Y_t = 0$ ($S,U,J$ as in the BFS algorithm described in Section \ref{pre}). Once $|S \cup U| = \log^2 n$, and at any subsequent round of the algorithm, when we query the $t$-th vertex, we move it into $U$ if $X_t = 1$, and move it into $J$ if $X_t = 0$. Note that in this manner, up until $|S \cup U| = \log^2 n$, the exploration of the component of $v$ is in $G_2$, and afterwards we continue the exploration in $G_1$. Also, each query gets a positive answer independently and with probability at least $p_1$.

    Observe that if $A_v$ occurs for some $v\in V(G)$, then we must have reached a moment where $|S \cup U| = \log^2 n$. Suppose that $C_{G_2}(v) \cap W_1 = \emptyset$. Then, the above process must have ended before $|S \cup U| = \log^4 n$. Indeed, otherwise, there are vertices in $C_{G_2}(v)$ belonging to components of size at least $\log^4 n/ \log^2 n = \log^2 n$ in $G_1$, and thus (by definition) intersecting with $W_1$. Hence, the probability that $A_v$ occurs is at most the probability that the above process stopped (and as a result, $U$ empties) at some round $t$ of the algorithm where $\log^2 n \leq |S| \leq \log^4 n$. On the other hand, notice that the above process is dominated by a BFS process with probability $p$ and dominates a BFS process with probability $p_1$, hence using Lemma \ref{nsc_general} in each case implies that the probability that $\log^2 n \leq |S| \leq \log^4 n$ is at most $o\left(\frac{1}{n}\right)$. Thus, by the union bound over all $v\in V(G)$, $\mathbb{P}\left[\cup_{v\in V(G)} A_v\right]=o(1)$, completing the  proof.
\end{proof}

For the last step of the proof of Theorem~\ref{general}, we show that \textbf{whp}, $|W_2|=(1+o(1))x\cdot n$.

\begin{lemma} \label{large_comp_2}
    \textbf{Whp}, $|W_2|=(1+o(1))x\cdot n$.
\end{lemma}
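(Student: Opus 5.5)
We will show $\mathbb{E}|W_2|=(1+o(1))xn$ and then concentrate $|W_2|$ around its mean. Since $G_2$ has the same distribution as $G[V_p]$, this is a statement about $G[V_p]$ only. For a fixed $v$, by Corollary~\ref{nsc1} (and Corollary~\ref{nsc}) \textbf{whp} $|C_{G_2}(v)|\notin[\sqrt d,\frac{100}{\epsilon^2}\log n]$. Hence $\mathbb{P}(v\in W_2)=\mathbb{P}(|C_{G_2}(v)|\ge\sqrt d)+o(1)$, and it suffices to show that
\[
\mathbb{P}\left(|C_{G[V_p]}(v)|\ge \sqrt d\right)=(1+o(1))x .
\]
Both $x$ and the target probability are of order $\epsilon/d$, so the $o(1)$ errors must be $o(1/d)$. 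We therefore work with relative errors, conditioning on $v\in V_p$ (probability $p$) and comparing with $y$ instead.

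\emph{Upper bound.} Run the BFS from $v$. Conditioned on $v\in V_p$, each retained vertex has at most $d-1$ untested neighbours, except $v$ which has $d$. The exploration is therefore stochastically dominated by a Galton--Watson tree with root offspring $Bin(d,p)$ and subsequent offspring $Bin(d-1,p)$. Its probability of reaching size $\sqrt d$ is $y+o(1)$, because a supercritical GW tree that survives to size $\sqrt d=\omega(1)$ survives forever with probability $1-o(1)$. This gives $\mathbb{P}(|C(v)|\ge\sqrt d)\le p(y+o(1))=(1+o(1))x$, where the last step uses $y=\Theta(\epsilon)$.

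\emph{Lower bound.} We show that up to size $\sqrt d$ the BFS dominates a GW tree with $Bin((1-o(1))d,p)$ offspring. Throughout the first $\sqrt d$ retained vertices, the set $N[S]\cup U$ has size at most $O(d^{3/2})$. We need that each newly explored vertex has at least $(1-o(1))d$ neighbours outside the already queried set. Using \ref{prop2A} alone would only give $1-O(\epsilon^2)$ relative errors, which is not enough. Instead we use the bad/good vertex argument of Lemma~\ref{nsc_general_general}: at most a small fraction of queried vertices are bad. Better, we use \ref{prop1A} with $\alpha=1$ (sets of size up to $c_3\log n\ge\sqrt d$ must be handled; if $\sqrt d>c_3\log n$ we use \ref{prop2A} on the queried set of size $\le d^{3/2}\cdot O(1)\le 4d^6\log^Cn$) to see that the explored set $S$ has $|N(S)|\ge c d|S|$. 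Combined with $e(U,U^c)\ge(1-\epsilon^2/1000)d|U|$, the total number of "lost" edges is at most $\frac{\epsilon^2}{1000}d\cdot O(d^{3/2})$. Averaged over the $O(\sqrt d)$ retained vertices, each loses only $O(\epsilon^2 d)$ children in expectation, and a truncated comparison then gives survival probability $\ge y-O(\epsilon^2)\cdot(\text{something})$.

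This averaging yields only $y(1-O(\epsilon))$, which is the main obstacle. The fix I would try first is to grow to size $\sqrt d$ in two stages, since \ref{prop2A} applied to the queried set (size $\le 4d^{3/2}$) does not need $\epsilon$-loss per se. With $\epsilon'$ replaced by any $\eta\to0$, we need $e(U,U^c)\ge(1-\eta)d|U|$ for $|U|\le d^{3/2}$. This follows from \ref{prop2A} applied at scale $d^6$: a set $U$ of size $d^{3/2}$ with $\eta d|U|$ internal edges can be blown up by its neighbourhood to give a violation. If that fails, I would argue instead that the early exploration (first $\log d$ generations, of size $d^{o(1)}$) is tree-like for all but $o(1)$ fraction, again via \ref{prop2A}, which gives $y-o(\epsilon)$.

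\emph{Concentration.} $|W_2|$ is a function of the independent indicators $\mathbb{1}_{v\in V_p}$. Changing one vertex changes $|W_2|$ by at most the total size of components touched. After truncating to the \textbf{whp} event that no component outside the giant has size $>\frac{100}{\epsilon^2}\log n$ and that the giant is unique (Lemmas~\ref{one_comp_2}, \ref{big_intersect_W}, Corollary~\ref{nsc}), this effect is at most $d\cdot\frac{100}{\epsilon^2}\log n$. Removing a vertex from the giant can split it, but each resulting piece is either large and thus rejoined whp, or small and bounded by the same size. We then apply Claim~\ref{hoef}(1) with $m=n$, $C=O(d\log n)$, $t=o(n/d)$: the exponent is $t^2/(C^2np)=\Omega(n/(d\log^2n))\cdot o(1)^2\to\infty$, using $d<n^\gamma$. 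A simpler alternative is a second-moment bound: $\mathrm{Cov}(\mathbb{1}_{u\in W_2},\mathbb{1}_{w\in W_2})$ is negligible for $u,w$ far apart, since local events are determined within radius $O(\log n)$ balls.
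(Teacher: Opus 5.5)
Your reduction to $\mathbb{P}(|C_{G[V_p]}(v)|\ge\sqrt d)$, the Galton--Watson upper bound and the martingale concentration are essentially the paper's argument. The gap is the lower bound $\mathbb{P}(|C_{G[V_p]}(v)|\ge\sqrt d)\ge(1-o(1))x$: you never establish it, and neither of your fallbacks can. The reason is that \ref{prop0A}--\ref{prop2A} force neither $(1-o(1))d$ edge expansion of small sets nor tree-like neighbourhoods. For example, let $\eta=\epsilon^2/2000$ and let $R$ be a random $((1-\eta)d+1)$-regular graph on $n$ vertices. Split $V(R)$ by an equitable colouring into independent sets of size $\eta d$, and place a clique on each class, as in Section~\ref{construction}. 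Whp this $d$-regular graph satisfies \ref{prop0A}--\ref{prop2A} (with $\alpha=1$): the cliques contribute at most $\frac{\eta}{2}d|U|$ edges inside any $U$, and $R$ contributes only $o(d|U|)$ when $|U|=o(n)$. Yet each class $U$ has $e(U,U^c)=(1-\eta+\frac1d)d|U|$, so your ``blow-up'' claim fails, and no vertex has a tree-like neighbourhood. Worse, suppose the exploration enters $u$ through a clique edge. Then all $\eta d-1$ clique-mates of $u$ were already queried as neighbours of its parent, so $u$ has at most $(1-\eta)d+1$ untested neighbours. Locally the exploration is therefore a two-type branching process whose survival probability $s$ is strictly smaller than $y$ (by order $\eta^2$). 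This gives $\mathbb{E}|W_2|\le(1+o(1))psn\le(1-c)xn$ for some $c=c(\epsilon)>0$. So the inequality you are missing, and with it the lemma, fails under the stated hypotheses, and no refinement of your averaging can yield $(1+o(1))x$.

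The paper settles this step in one line. It stops the BFS once $\sqrt d$ vertices have been discovered and asserts that every queued vertex still has $d-\sqrt d$ available neighbours, so the exploration dominates a Galton--Watson tree with $Bin(d-\sqrt d,p)$ offspring. That assertion only accounts for neighbours in $S\cup U$. A queued vertex may also have many neighbours among the roughly $d^{3/2}$ rejected vertices in $J$; in the example it has about $\eta d\gg\sqrt d$ of them. So the paper's argument has precisely the hole you diagnosed. The one-line argument does work under an additional local hypothesis, e.g.\ if any two vertices have at most $m=o(\sqrt d)$ common neighbours (in $Q^d$, $m=2$). Then every queried vertex other than $v$ is a neighbour of one of the fewer than $\sqrt d$ processed vertices, so a queued vertex has at most $1+\sqrt d\,(m+1)=o(d)$ queried neighbours.

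The other parts of your plan are fine. The error in your reduction via Corollary~\ref{nsc1} is $d\,e^{-\Omega_\epsilon(\sqrt d)}=o(1/d)$ by Lemma~\ref{nsc_general_general}, as your relative-error bookkeeping requires. In the martingale step no truncation to whp events is needed. Since $W_2$ is defined by a size threshold, flipping one vertex changes $|W_2|$ by at most $1+\frac{100}{\epsilon^2}d\log n$ deterministically (large pieces of a split component remain in $W_2$), so Claim~\ref{hoef} applies directly.
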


\begin{proof}
    Let us first show that $\mathbb{E}[W_2] = (1 + o(1))x\cdot n$. To that end, fix $v \in V$ and let us estimate $\mathbb{P}\left(|C_{G[V_p]}(v)|\ge \frac{100}{\epsilon^2}\log n\right)$. Run the BFS algorithm rooted at $v$. Since $G$ is $d$-regular, this BFS exploration is stochastically dominated by the vertex version of Galton-Watson tree with offspring distribution $Bin(d,p)$, as expressed in Section \ref{intro}. Since $dp = 1+\epsilon$, standard results (see, for example, \cite{zbMATH06976511}, Theorem 4.3.12) imply that $\mathbb{P}\left(|C_{G[V_p]}(v)|\ge \frac{100}{\epsilon^2}\log n\right)\le (1+o(1))x$.

    On the other hand, consider the BFS exploration with the following alteration --- we terminate the process either once $v$ moved to $S$ (i.e, the process on $C_{G[V_p]}(v)$ has ended), or once we have discovered $\sqrt{d}$ vertices. Then, during the exploration process every vertex in the queue has at least $d-\sqrt{d}$ neighbors in $G$, and thus this BFS exploration stochastically dominates a Galton-Watson tree with offspring distribution $Bin\left(d-\sqrt{d},p\right)$. Since $\left(d -\sqrt{d}\right)p = \lambda - o(1)>1$, we have by standard results that $\mathbb{P}\left(|C_{G[V_p]}(v)|\ge \sqrt{d}\right)\ge (1+o(1))x$, and thus by Corollary \ref{nsc1}, $\mathbb{P}\left(|C_{G[V_p]}(v)|\ge \frac{100}{\epsilon^2}\log n\right)\ge (1+o(1))x$. Overall, $\mathbb{E}[W_2] = (1+o(1))x\cdot n$.
 
    To show that $|W_2|$ is concentrated around its mean, consider the standard vertex-exposure martingale. Each vertex can change the value of $|W_2|$ by at most $\frac{100}{\epsilon^2}d\log n$. Thus, by the first part of Claim \ref{hoef},
    $$\mathbb{P}\left(\big||W_2|-\mathbb{E}[|W_2|]\big|\geq n^{3/4}\right)\leq 2\exp \left(-\Theta\left(\frac{n^{3/2}}{np\cdot d^2\log^2 n}\right)\right)=o(1),$$
    where the last equation holds since $d<n^{\gamma}$ for an appropriate $\gamma>0$. Therefore, \textbf{whp} $|W_2| = (1 + o(1))x\cdot n$, as required.
\end{proof}

We are now ready to prove Theorem~\ref{general}.

\begin{proof}[Proof of Theorem~\ref{general}]\label{finalproofgeneral}
    By Lemma \ref{one_comp_2}, \textbf{whp} there is a unique component $L_1$ in $G_2$, such that $W_1\subseteq V(L_1)$. By Corollary \ref{nsc}, \textbf{whp} any component in $G_2$ besides $L_1$ is either of size at most $\frac{100}{\epsilon^2}\log n$, or of size at least $d^5\log^C n$. By Lemma \ref{big_intersect_W}, \textbf{whp} any component in $G_2$ whose size is at least $d^5\log^C n$ intersects with $W_1$, and is thus part of $L_1$. Thus, \textbf{whp} all components of $G_2$ besides $L_1$ are of size at most $\frac{100}{\epsilon^2}\log n$.
    
    Let us now show that \textbf{whp} $|V(L_1)| = (1 + o(1))x\cdot n$, where $x = x(p)$ is defined according to \eqref{eq}. By Lemma \ref{large_comp_2}, \textbf{whp} there are $(1 + o(1))x\cdot n$ vertices in $G[V_p]$ in components whose size is at least $\frac{100}{\epsilon^2}\log n$. Since \textbf{whp} there is only one component $L_1$, whose size in $G[V_p]$ is at least $\frac{100}{\epsilon^2}\log n$, we
    conclude that \textbf{whp} $|V(L_1)| = (1 + o(1))x\cdot n$.
\end{proof}

\subsection{Modification of the proof of Theorem~\ref{general} for $0<\alpha<1$}\label{modi}

The proof of Theorem~\ref{general} for $0<\alpha<1$ is essentially the same as the proof presented in this section. The distinction between the proofs appears mainly in Lemma \ref{dist_2}.

Since the vertex expansion in \ref{prop1A} is slightly weaker than in the case where $\alpha=1$, the radius in Lemma \ref{dist_2} must be increased. This larger radius is required to ensure that the ball around each vertex still contains $\Omega\left(d\log n\right)$ vertices. Specifically, the required adjustments lead to the following upper bound on the radius:

\begin{align*}
    \frac{1}{\alpha}+\frac{2}{\alpha}\log_d\log n.
\end{align*}

One can then observe that Lemma \ref{one_comp_2} follows verbatim with the new radius, under the assumption that the edge expansion in \ref{prop2A} holds for sets of size up to $d^{\frac{4}{\alpha}+2}\log^C n$.
The rest of the proof stays the same, under this modification.

\section{Proof of Theorem~\ref{n,d,lambda}} \label{Pn,d,lambda}

This section addresses $(n,d,\lambda)$-graphs. First, we state the well known ``expander mixing lemma" which is a classical result by Alon and Chung \cite{zbMATH04073036} regarding the edge distribution of $(n, d, \lambda)$-graphs:

\begin{claim} \label{original_eml}
    Let $G = (V, E)$ be an $(n, d, \lambda)$-graph. Then for any disjoint pair of subsets $B,C\subseteq V(G)$,
    \begin{align*}
        \left|e_G(B,C)-\frac{d}{n}|B||C|\right|\le \lambda\sqrt{|B||C|}.
    \end{align*}
\end{claim}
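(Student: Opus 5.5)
This is the expander mixing lemma of Alon and Chung, which the paper cites rather than proves. I would prove it by the standard spectral argument.

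Let $A$ be the adjacency matrix of $G$ and let $v_1=\frac{1}{\sqrt n}\mathbf{1},v_2,\dots,v_n$ be an orthonormal eigenbasis with eigenvalues $\lambda_1=d,\lambda_2,\dots,\lambda_n$. Because $G$ is an $(n,d,\lambda)$-graph, $|\lambda_i|\le\lambda$ for every $i\ge 2$. Since $B$ and $C$ are disjoint, the indicator vectors satisfy $e_G(B,C)=\mathbf{1}_B^{T}A\mathbf{1}_C$.

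The first step is to expand both indicators in the eigenbasis:
\[
\mathbf{1}_B=\sum_i \beta_i v_i,\qquad \mathbf{1}_C=\sum_i \gamma_i v_i.
\]
The leading coefficients are $\beta_1=\langle \mathbf{1}_B,v_1\rangle=|B|/\sqrt n$ and $\gamma_1=|C|/\sqrt n$. This gives
\[
e_G(B,C)=\sum_i\lambda_i\beta_i\gamma_i=\frac{d}{n}|B||C|+\sum_{i\ge2}\lambda_i\beta_i\gamma_i .
\]

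The second step bounds the error term. Using $|\lambda_i|\le\lambda$ and Cauchy--Schwarz,
\[
\Big|\sum_{i\ge2}\lambda_i\beta_i\gamma_i\Big|\le\lambda\Big(\sum_{i\ge 2}\beta_i^2\Big)^{1/2}\Big(\sum_{i\ge 2}\gamma_i^2\Big)^{1/2}.
\]
By Parseval, $\sum_{i\ge2}\beta_i^2=\|\mathbf{1}_B\|^2-\beta_1^2=|B|-|B|^2/n\le|B|$, and likewise $\sum_{i\ge 2}\gamma_i^2\le|C|$. Hence the error is at most $\lambda\sqrt{|B||C|}$, which is the claimed bound.

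There is no genuine obstacle here. The only points needing care are that the top eigenvector of a $d$-regular graph is the normalized all-ones vector, and that disjointness of $B$ and $C$ ensures each edge between them is counted exactly once in $\mathbf{1}_B^TA\mathbf{1}_C$.
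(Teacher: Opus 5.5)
Your proof is correct. It is the standard spectral proof of the expander mixing lemma. The paper does not prove this claim itself; it cites Alon and Chung, whose argument is essentially yours.

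Your Parseval step actually gives the slightly sharper bound $\lambda\sqrt{|B|(1-|B|/n)\,|C|(1-|C|/n)}$. You reach the stated bound only by discarding the nonpositive terms $-|B|^2/n$ and $-|C|^2/n$.
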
 

Let us recall the setting of Theorem~\ref{n,d,lambda}: let $\epsilon>0$ be a sufficiently small constant and let $\delta\coloneqq \delta(\epsilon)>0$ be chosen sufficiently small. We set $p=\frac{1+\epsilon}{d}$. Let $G$ be an $(n,d,\lambda)$-graph such that $d=\omega_n(1)$, $d=o(n)$, and $\frac{\lambda}{d}<\delta$. 

Let us emphasize the similarities and differences between the assumptions in Theorem~\ref{general} and Theorem~\ref{n,d,lambda}. Note that due to Claim \ref{original_eml}, our graph satisfies properties \ref{prop0A} and a version of \ref{prop2A} --- for every $U\subseteq V(G)$ with $|U|\le 4\epsilon^3n$, we have $e_G(U,U^c)\ge \left(1-\frac{\epsilon^2}{1000}\right)d|U|$. Therefore, the steps in the proof of Theorem~\ref{general} which do not use \ref{prop1A} are still valid in our case. In fact, Section \ref{need p2} is the only place where we use \ref{prop1A} in the proof of Theorem~\ref{general}. We show that in our case, Lemma \ref{expansion}, together with Claim \ref{original_eml}, are sufficient to establish an analogous version of Section \ref{need p2}.

Recall our notation from the proof of Theorem~\ref{general}: for a small constant $s\coloneqq s(\epsilon)>0$, we set $p_2=\frac{s}{d}$ and let $p_1$ be such that $(1-p_1)(1-p_2)=1-p$, noting that $p_1\geq \frac{1+\epsilon-s}{d}$. Notice that $G[V_p]$ has the same distribution as $G[V_{p_1}\cup V_{p_2}]$. We abbreviate $G_1:=G[V_{p_1}]$ and $G_2:=G[V_{p_1}\cup V_{p_2}]$.
We define the set of vertices in ‘large’ components in $G_1$:
\begin{align*}
    W_1=\bigg\{v\in V(G)\;\bigg|\; |C_{G_1}(v)|\geq \frac{100}{\epsilon^2}\log \left(\frac{n}{d}\right)\bigg\}.
\end{align*}

\begin{lemma} \label{one_comp_2_pseudo}
    \textbf{Whp} there is a component $K$ in $G_2$ such that $W_1\subseteq V(K)$.
\end{lemma}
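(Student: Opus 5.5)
We follow the same sprinkling scheme as in Lemma~\ref{one_comp_2}, but obtain the many connecting paths from Lemma~\ref{expansion} and Claim~\ref{original_eml} instead of from \ref{prop1A}.

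\textbf{Step 1: components of $W_1$ are large.} Since $(n,d,\lambda)$-graphs satisfy the version of \ref{prop2A} for all $|U|\le 4\epsilon^3 n$, we can apply Corollary~\ref{nsc_pre} to $G_1$ with $t=4\epsilon^3 n$. Hence \textbf{whp} every component of $G_1$ meeting $W_1$ has size at least $\epsilon^3 n/d$. Moreover, by Lemma~\ref{expansion} (applicable to $G_1$ since $dp_1>1$), \textbf{whp} every such component $S$ has $|N_G(S)|\ge \frac{9}{10}d|S|$. We fix an outcome of $G_1$ with these properties and expose $V_{p_2}$ afterwards.

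\textbf{Step 2: many neighbours in common.} Consider a component-respecting partition $W_1=A\sqcup B$ with $|A|\le |B|$. Then $N(A)$ and $N(B)$ are disjoint from $W_1$ and have sizes at least $\frac{9}{10}d|A|$ and $\frac{9}{10}d|B|$. We want a set $M\subseteq N(A)\cap N(B)$ with $|M|=\Omega(|A|)$ up to a constant factor, or a short connection of another kind, to which we can sprinkle.

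The cleanest route is to apply Claim~\ref{original_eml} to the disjoint sets $N(A)$ and $N(B)\setminus N(A)$. Suppose $|N(A)\cap N(B)|$ is small. The sets $N(A)$ and $N(B)$ have total size at least $\frac{9}{10}d|W_1|$, which is linear in $n$ when $|W_1|=\Theta(n/d)$. The mixing lemma with $\lambda<\delta d$ then gives at least $\frac{d}{n}|N(A)||N(B)|-\delta d\sqrt{|N(A)||N(B)|}$ edges between them. Each such edge $xy$ yields a path $A\!-\!x\!-\!y\!-\!B$ whose two interior vertices are both outside $V_{p_1}$. The harder case is when $|N(A)|+|N(B)|$ exceeds $n$, which forces a large intersection. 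In that case the common neighbours give paths with a single interior vertex.

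In either case we pass to a vertex-disjoint subfamily greedily, losing a factor of at most $d^2$, and obtain at least $c\,|A|\cdot\min\{1,|B|d/n\}/d$ disjoint paths, each with at most two interior vertices. This is the step I expect to be the main obstacle: tuning $\delta$ against $\epsilon^3$ so that the error term $\delta d\sqrt{\cdot}$ is dominated, including the regime where $|A|$ is only of order $\epsilon^3 n/d$. The lower bound $|W_1|\ge$ a constant times $xn$, which holds \textbf{whp} by the argument of Lemma~\ref{large_comp_2} (and that argument does not use \ref{prop1A}), ensures that $|B|\ge |W_1|/2=\Omega(\epsilon n/d)$, so $|N(B)|=\Omega(\epsilon n)$.

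\textbf{Step 3: union bound.} Each path survives in $G_2$ with probability at least $p_2^2=s^2/d^2$. So a fixed partition fails with probability at most $\exp(-c' s^2 |A|/d^{3})$, and refining the disjointness count may be needed to keep the exponent of $d$ low. The number of partitions with $|A|=a$ is at most $2^{ad/(\epsilon^3 n)}$, since each part is a union of components of size at least $\epsilon^3 n/d$. Because $a\ge \epsilon^3 n/d$ and $d=o(n)$, the failure probability per partition is exponentially small in $a/d^{3}$, while the count is only $2^{O(ad/n)}$. Summing over $a$ gives $o(1)$ provided $n/d^{4}\to\infty$.

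If $d$ is larger than this, I would replace the greedy disjoint-path step by a Janson-type estimate on the number of surviving length-$\le 3$ connections, or by directly exposing $N(A)\cap V_{p_2}$ and then applying the mixing lemma to it. This keeps the loss at a constant power of $d$ relative to a bound that is linear in $n$.
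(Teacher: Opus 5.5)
There is a genuine gap. Your argument only proves the lemma when $n/d^4\to\infty$, as you note yourself, but Theorem~\ref{n,d,lambda}, and hence this lemma, must hold for every $d=o(n)$.

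This is a structural problem with counting vertex-disjoint paths, not a matter of tuning $\delta$ against $\epsilon^3$. When $|N(A)\cap N(B)|$ is small, the connections must use paths $A\!-\!x\!-\!y\!-\!B$ with two interior vertices. Any vertex-disjoint family of such paths has at most $n/2$ members, and each survives the sprinkling with probability $p_2^2=s^2/d^2$. So the failure bound is at best $(1-s^2/d^2)^{n/2}\approx\exp(-s^2n/(2d^2))$, which is bounded away from $0$ once $d=\Omega(\sqrt{n})$. Your closing paragraph mentions a remedy (``Janson-type estimate'' or ``directly exposing $N(A)\cap V_{p_2}$''), but does not carry it out, and that step is the real content of the lemma.

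The paper's argument exploits the overlap of these length-$3$ connections through a two-round exposure.

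\emph{Reduction to pairs.} Whp $|V_{p_1}|\le 2n/d$, so $W_1$ has at most $2/\epsilon^3$ components. It therefore suffices to connect each fixed pair $A,B$ with probability $1-o(1)$. No union bound over partitions and no appeal to Lemma~\ref{large_comp_2} is needed.

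\emph{Common neighbours.} By Lemma~\ref{expansion}, $|N(A)|,|N(B)|\ge\frac{9\epsilon^3}{10}n$. If $|N(A)\cap N(B)|\ge\frac{\epsilon^3}{5}n$, a common neighbour lands in $V_{p_2}$ with failure probability $\exp(-\Omega(n/d))$.

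\emph{Many edges otherwise.} Set $A'=N(A)$ and $B'=N(B)\setminus A'$, both of size at least $\frac{\epsilon^3}{2}n$. For $\delta$ small, Claim~\ref{original_eml} gives $e(A',B')\ge\frac{\epsilon^6}{5}dn$.

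\emph{First exposure.} Reveal $R=A'\cap V_{p_2}$, and let $B^*\subseteq B'$ be the vertices with a neighbour in $R$. Since $1-(1-p_2)^{d_{A'}(v)}\ge d_{A'}(v)p_2/2$, we get $\mathbb{E}|B^*|\ge\frac{p_2}{2}e(A',B')=\Omega(\epsilon^6 sn)$. Each vertex of $A'$ changes $|B^*|$ by at most $d$, so part~1 of Claim~\ref{hoef} gives $|B^*|=\Omega(n)$ with failure probability $\exp(-\Omega(n/d))$.

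\emph{Second exposure.} Reveal $B'\cap V_{p_2}$. Then $B^*\cap V_{p_2}\neq\emptyset$ with failure probability $\exp(-\Omega(n/d))$.

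The only requirement is $n/d\to\infty$, which is why this covers the whole range $d=o(n)$. A Janson computation would also work, with $\mu=e(A',B')p_2^2=\Theta(n/d)$ and $\Delta=O(n/d)$, but it has to be done rather than just named.
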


\begin{proof}
    First, since $|V_{p_1}|\sim Bin(n,p_1)$, then using Claim \ref{cher}, we have that \textbf{whp}, $|V_{p_1}|\le 2\cdot \frac{n}{d}$. Now, applying Corollary \ref{nsc_pre} with $t=4\epsilon^3n$ (which is valid by Claim \ref{original_eml}, as mentioned earlier in this section) yields that \textbf{whp}, $G_1$ contains no components of size between $\frac{100}{\epsilon^2}\log n$ and $\epsilon^3\cdot \frac{n}{d}$. By adapting and refining the proofs of Lemma \ref{nsc_general_general} and Corollary \ref{nsc_pre}, we can push this range forward to establish that, in fact, \textbf{whp} there are no components in $G_1$ of size between $\frac{100}{\epsilon^2}\log \left(\frac{n}{d}\right)$ and $\epsilon^3\cdot \frac{n}{d}$ (indeed, note that the number of vertices in $G[V_{p_1}]$ is well very concentrated around $\Theta\left(\frac{n}{d}\right)$) and hence all components in $G_1[W_1]$ are \textbf{whp} of size at least $\epsilon^3\cdot \frac{n}{d}$. Since \textbf{whp} $|V_{p_1}|\le 2\cdot \frac{n}{d}$, then \textbf{whp} there are at most $\frac{2}{\epsilon^3}$ such components. We aim to show that \textbf{whp} for every pair $A,B$ of components in $G_1[W_1]$, there exists a path in $G_2$ between $A$ and $B$. Since \textbf{whp}  there are at most $\frac{2}{\epsilon^3}$ such components, it suffices to prove the argument for a specific pair, that is, we show that given a pair $A,B$ of components in $G_1[W_1]$, then, \textbf{whp}, there exists a path in $G_2$ connecting $A$ and $B$.

    Let $A,B$ be two components in $G_1[W_1]$. Since $|A|,|B|\ge \epsilon^3\cdot \frac{n}{d}$, then by Lemma \ref{expansion}, $|N_G(A)|,|N_G(B)|\ge \frac{9\epsilon^3}{10}n$. Notice that we can assume that $|N_G(A)\cap N_G(B)|< \frac{\epsilon^3}{5} n$, as otherwise \textbf{whp} $|N_G(A)\cap N_G(B)\cap V_{p_2}|\ge 1$ which leads to a path of length $2$ between $A$ and $B$ in $G_2$. We define $A'=N_G(A)$, and $B'=N_G(B)\backslash A'$. Since $|N_G(A)|,|N_G(B)|\ge \frac{9\epsilon^3}{10}n$, and $|N_G(A)\cap N_G(B)|< \frac{\epsilon^3}{5} n$, we have $|A'|,|B'|\ge \frac{\epsilon^3}{2}n$. Now, by the expander mixing lemma (Claim \ref{original_eml}):
    
    \begin{align*}
        e_G(A',B')\ge \frac{d}{n}|A'||B'|-\lambda\sqrt{|A'||B'|}\ge \frac{\epsilon^6}{4}dn-\lambda n\ge \frac{\epsilon^6}{5}dn,
    \end{align*}
    where we use here the fact that $\frac{\lambda}{d}<\delta$ and the fact that $\delta$ is sufficiently small in terms of $\epsilon$. We aim to find, \textbf{whp}, an edge between $A'$ and $B'$ with both endpoints falling into $G_2$, after sprinkling with $p_2$, thereby connecting $A$ and $B$ in $G_2$.

    We expose the vertices of $A'\cup B'$ with probability $p_2$ in two steps. First, let $R=A'\cap V_{p_2}$, and let $B^*\subseteq B'$ be a subset of $B'$ such that $v\in B^*$ if and only if $N_G(v)\cap R\ne \emptyset$. That is, $B^*$ contains the vertices of $B'$ that have at least one neighbor in $R$. Let $v\in B'$. Denote by $d_{A'}(v)\coloneqq |N_G(v)\cap A'|$ the number of neighbors of $v$ that lie in $A'$. Observe that
    \begin{align*}
        \mathbb{P}(v\in B^*)=1-(1-p_2)^{d_{A'}(v)}\ge \frac{d_{A'}(v)\cdot p_2}{2},
    \end{align*}
    where the last inequality holds since $d_{A'}(v)\cdot p_2\le s$ and since $s>0$ is a small constant.
    Hence, 
    \begin{align*}
        \mathbb{E}[|B^*|]\ge \sum_{v\in B'}\frac{d_{A'}(v)\cdot p_2}{2}=\frac{p_2}{2}\cdot \sum_{v\in B'}d_{A'}(v)=\frac{p_2}{2}\cdot e_G(A',B')\ge \frac{\epsilon^6 s}{10}\cdot n.
    \end{align*}
    Now, notice that changing whether $v\in A'$ goes to $R$ influence at most $d_{A'}(v)\le d$ vertices, hence applying Claim \ref{hoef} yields that
    \begin{align*}
        \mathbb{P}\left(|B^*|\le\frac{\mathbb{E}[|B^*|]}{2}\right)&\le 2\exp\left(-\frac{\mathbb{E}[|B^*|]^2}{8d^2|A'|p_2+2d\cdot \mathbb{E}[|B^*|]}\right)\le 2\exp\left(-\frac{\frac{\epsilon^{12}s^2}{100}n^2}{8sdn+2dn}\right)\\
        &=2\exp\left(\frac{-\epsilon^{12}s^2n}{100(8s+2)d}\right)=o(1),
    \end{align*}
    where the last equality holds since $d=o(n)$. Therefore, \textbf{whp}, $|B^*|\ge \frac{\epsilon^6 s}{20}\cdot n$.

    We now expose the vertices of $B'$ with probability $p_2$. By Claim \ref{cher}, we have that \textbf{whp}, $|B^*\cap V_{p_2}|\ge \frac{\frac{\epsilon^6 s}{20}\cdot n\cdot p_2}{2}=\frac{\epsilon^6 s^2}{40}\cdot \frac{n}{d}$. In particular, \textbf{whp} there exists $v\in B'\cap V_{p_2}$ that has a neighbor in $A'\cap V_{p_2}$, which establishes the lemma.
\end{proof}

\section{Proof of Theorem~\ref{constant}}\label{Pconstant}

In this section we give a proof outline of Theorem~\ref{constant}, since the proof of Theorem~\ref{constant} has many similarities to that of Theorem~\ref{general}. The reason for the similarity is that properties \ref{prop0C} and \ref{prop2C} are quite similar to properties \ref{prop0A}-\ref{prop2A}. First, let us restate the setting: let $n$ be a sufficiently large integer, and let $3\le d=O(1)$ be an integer. Fix $1<\alpha<d-1$ and $b>0$ as constants. Set $p=\frac{\alpha}{d-1}$. $\delta:=\delta(\alpha)>0$ and $C:=C(\alpha)>0$ are chosen sufficiently small and large, respectively. Let $G$ be a $d$-regular graph on $n$ vertices, satisfying properties \ref{prop0C}, \ref{prop2C} with $C'\coloneqq C+1$. Similar to the proof of Theorem~\ref{general}, we set $p_2=\frac{s}{d-1}$ for a sufficiently small $s\coloneqq s(\alpha,C)>0$, and let $p_1$ be such that $(1-p_1)(1-p_2)=1-p$, noting that $p_1\geq \frac{\alpha-s}{d-1}$. Notice that $G[V_p]$ has the same distribution as $G[V_{p_1}\cup V_{p_2}]$. We abbreviate $G_1:=G[V_{p_1}]$ and $G_2:=G[V_{p_1}\cup V_{p_2}]$.

Let us show how to translate property \ref{prop2C} to vertex- and edge-expansion properties similar in spirit to properties \ref{prop1A} and \ref{prop2A}. For the latter, it is obvious that property \ref{prop2C} yields that for every $U\subseteq V(G)$ with $|U|\leq \log^{C'} n$, we have $e(U,U^c)\geq (d-2-2\delta)|U|$. For the vertex expansion, see the following lemma:

\begin{lemma} \label{basic2}
    For every $U\subseteq V(G)$ with $|U|\leq \log^C n$, $|N_G(U)|\ge \left(\frac{d}{1+\delta}-2\right)|U|$.
\end{lemma}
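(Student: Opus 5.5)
Let $U\subseteq V(G)$ with $|U|\le \log^C n$, and write $W\coloneqq N_G[U]=U\cup N_G(U)$. We will apply \ref{prop2C} to $W$ itself. This is allowed because $|W|\le (d+1)|U|\le (d+1)\log^C n\le \log^{C'} n$ for $n$ large, where $C'=C+1$ is the exponent with which \ref{prop2C} was assumed in this section. Hence $e(W)\le (1+\delta)|W|$.

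Next we bound $e(W)$ from below by counting edges incident to $U$. Every edge of $G$ with an endpoint in $U$ lies inside $W$. Each vertex of $U$ has degree $d$, so
\[
e(W)\ \ge\ e(U)+e_G(U,U^c)\ =\ d|U|-e(U).
\]
Applying \ref{prop2C} to $U$ gives $e(U)\le (1+\delta)|U|$, and therefore $e(W)\ge (d-1-\delta)|U|$.

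Combining the two bounds, and using $|W|=|U|+|N_G(U)|$, we get
\[
(d-1-\delta)|U|\ \le\ (1+\delta)\bigl(|U|+|N_G(U)|\bigr).
\]
Solving for $|N_G(U)|$ yields
\[
|N_G(U)|\ \ge\ \left(\frac{d-1-\delta}{1+\delta}-1\right)|U|\ =\ \left(\frac{d}{1+\delta}-2\right)|U|,
\]
which is exactly the claimed bound.

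The argument is a short double count, and the only point needing care is the size check that lets \ref{prop2C} be applied to $N_G[U]$ rather than just to $U$. This is precisely why the section assumes \ref{prop2C} up to $\log^{C+1}n$: the extra logarithmic factor absorbs the factor $d+1=O(1)$ by which $|N_G[U]|$ can exceed $|U|$.
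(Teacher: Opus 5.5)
Your proof is correct and is essentially the paper's argument. It uses the same double count, $e(N_G[U])\ge d|U|-e(U)\ge (d-1-\delta)|U|$, set against the upper bound $(1+\delta)|N_G[U]|$ from \ref{prop2C} with exponent $C'=C+1$. The only difference is cosmetic: the paper argues by contradiction and uses the assumed small neighbourhood to get $|N_G[U]|<d|U|$, whereas you argue directly via the trivial bound $|N_G[U]|\le (d+1)|U|$.
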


\begin{proof}
    Suppose otherwise, that is, there exists $U$ with $|U|\leq \log^C n$, but $|N_G(U)|< \left(\frac{d}{1+\delta}-2\right)|U|$.
    Then, we have $|U\cup N_G(U)|<\frac{(d-1-\delta)|U|}{1+\delta}<d|U|\le \log^{C+1} n=\log^{C'}n$.
    Thus, by \ref{prop2C},
    \begin{align*}
        e(U\cup N_G(U))\leq (1+\delta)|U\cup N_G(U)|<(d-1-\delta)|U|.
    \end{align*}
    On the other hand,
    \begin{align*}
        e(U\cup N_G(U))\ge \sum_{u\in U}d_G(u)-e(U)= d|U|-e(U)\geq (d-1-\delta)|U|,
    \end{align*}
    where the last inequality follows from \ref{prop2C}.
    Comparing the two bounds on $e(U\cup N_G(U))$ leads to a contradiction.
\end{proof}

While the assumptions in Theorems \ref{general} and \ref{constant} are rather similar, we will see in Lemma \ref{balls} why the stronger assumption \ref{prop2C} is necessary for this proof method to go through.

With Lemma \ref{basic2} in hand, we have the following argument, which we shall call \textit{the key argument} for the rest of this section, as it will accompany us throughout the entire proof. We formulate this argument for $G[V_p]$, noting that it applies to $G[V_{p_1}]$ as well.

Suppose we have a vertex $v\in V(G)$ and reveal its component in $G[V_p]$ via the BFS algorithm. Assume that $|C_{G[V_p]}(v)|=k$ for some $k\le \log^C n$. Set $S\coloneqq C_{G[V_p]}(v)$. By Lemma \ref{basic2}, during the BFS algorithm there were at least $|S|+|N_G(S)|\ge \left(\frac{d}{1+\delta}-1\right)k$ queried vertices, of which we have received at most $|S|=k$ positive answers. 
The probability of such an event to occur is bounded from above by the probability that $Bin\left(\left(\frac{d}{1+\delta}-1\right)k, \frac{\alpha}{d-1}\right)\le k$.

Using the key argument, we show that \textbf{whp}, $G[V_p]$ (as well as $G[V_{p_1}]$) contains no components of `intermediate' sizes (where here `intermediate' is of size between $\Omega_{\alpha}(\log n)$ and $\log^C n$). Formally, let $v\in V(G)$ and $k\le \log^C n$, and set $X\sim Bin\left(\left(\frac{d}{1+\delta}-1\right)k, \frac{\alpha}{d-1}\right)$. By the key argument, and using Claim \ref{cher}, we obtain that
\begin{align}\label{details_const}
    \mathbb{P}(|C_{G[V_p]}(v)|=k)\le \mathbb{P}\left(X\leq k\right)\le 2\exp\left(-\frac{(\mathbb{E}[X]-k)^2}{3\cdot \mathbb{E}[X]}\right).
\end{align}
Observe that
\begin{align}\label{details_const_2}
    \frac{1-\delta}{1+\delta}\cdot \alpha k\le \mathbb{E}[X]=\left(\frac{d}{1+\delta}-1\right)\cdot \frac{1}{d-1}\cdot \alpha k\le \alpha k.
\end{align}
Hence, using \eqref{details_const}, \eqref{details_const_2} and afterwards the union bound over all $v\in V(G)$ and $\frac{9\alpha}{\left(\frac{1-\delta}{1+\delta}\alpha-1\right)^2}\log n\le k\le \log^C n$, we have that \textbf{whp} $G_p$ contains no components of size between $\frac{9\alpha}{\left(\frac{1-\delta}{1+\delta}\alpha-1\right)^2}\log n$ and $\log^C n$.

Hence, as expected, we define
\begin{align*}
    W=\left\{v\in V(G)\;\bigg|\; |C_{G_1}(v)|=\frac{9\alpha}{\left(\frac{1-\delta}{1+\delta}\alpha-1\right)^2}\log n\right\}.
\end{align*} 

We are now turn to show that all `large' components in $G[V_{p_1}]$, $W$, merge after sprinkling with $p_2$ into one component in $G[V_p]$. By the key argument, and using again Lemma \ref{basic2} to show that the ball (in $G$) of radius $4\log_d\log n+1$ around each vertex $v\in V(G)$ contains many vertices, we show that \textbf{whp} the distance between any vertex $v\in V(G)$ from `large' components in $G[V_{p_1}]$, $W$, is at most $4\log_d\log n+1$ (these claims are analogous to Lemmas \ref{dist_2} and \ref{W_2}). Now we can apply essentially the same method as in Lemma \ref{one_comp_2} to show that \textbf{whp} there is a component $K$ in $G[V_p]$ such that $W\subseteq V(K)$. 

We also must check, as in the proof of Theorem~\ref{general} that sprinkling does not create new large components. More precisely, \textbf{whp}, there is no component in $G[V_p]$ of size at least $\log^C n$, which does not intersect $W$. This is done as in the proof of Lemma \ref{big_intersect_W}, and using again the key argument.

All left to do is to show that the unique `large' component which appeared in $G[V_p]$ is of the correct size. This argument goes quite differently compared to the corresponding argument in the proof of Theorem~\ref{general}, therefore, we provide here the proof in full.

Before that, we need one more lemma regarding the local structure of $G$, showing that there are ‘many’ vertices in $G$ that are far from any cycle. Formally,

\begin{lemma}\label{balls}
        There is a set $X \subseteq V(G)$ with $|X|\geq \left(1-\frac{1}{(d-1)^{\frac{1}{16\delta}}}\right)n$ such that for every $v \in X$, there are no cycles in $B\left(v,\frac{1}{16\delta}\right)$.
    \end{lemma}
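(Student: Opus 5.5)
Set $r\coloneqq \frac{1}{16\delta}$. The plan is to bound the number of \emph{bad} vertices, namely those $v$ whose ball $B(v,r)$ contains a cycle. We will show that every bad vertex lies in a small, dense structure, and then use \ref{prop2C} to show that such structures are rare in a quantitative sense.

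\textbf{Step 1: a bad vertex sits near a short cycle.} Suppose $B(v,r)$ contains a cycle. Then it contains a cycle $Z$ of length at most $2r+1$ together with a path of length at most $r$ from $v$ to $Z$. Taking a BFS tree from $v$ inside $B(v,r)$, any non-tree edge closes a cycle of length at most $2r+1$, and that cycle lies within distance $r$ of $v$. Hence every bad vertex lies within distance $r$ of some cycle of length at most $2r+1$.

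\textbf{Step 2: short cycles are far apart.} We use \ref{prop2C} to show that short cycles are sparse. Take two distinct cycles $Z_1,Z_2$ of length at most $2r+1$ at distance at most $2r$. Their union together with a connecting path spans a connected subgraph $H$ with $|V(H)|=O(r)\le\log^C n$ and $e(H)\ge |V(H)|+1$. We need this to contradict $e(U)\le(1+\delta)|U|$, i.e.\ we need excess $1>\delta|U|$. With $|U|\le 6r+2 = \frac{6}{16\delta}+2$, we get $\delta|U|\le \frac38+2\delta<1$. So any two distinct cycles of length at most $2r+1$ are at distance greater than $2r$; equivalently, every ball of radius $r$ meets at most one short cycle. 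Similarly, a single short cycle $Z$ plus chords is excluded, so each short cycle is induced.

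\textbf{Step 3: counting bad vertices.} Here we make the count for each cycle. Fix a short cycle $Z$ of length $\ell\le 2r+1$. Its $r$-neighborhood $N_r(Z)$ has size at most $\ell (d-1)^r\cdot O(1)$ from the tree growth. We then need to compare this with the total number $n$ of vertices. A naive count only gives the bound (number of short cycles)$\cdot \ell(d-1)^r$, so we need to show that the number of short cycles is at most $n/(d-1)^{2r}$ or so. For this we use disjointness: by Step 2, the balls $B(Z,r)$ around distinct short cycles are pairwise disjoint. The plan is to show that each such ball has size roughly $(d-1)^{r}$ times $\ell$ when $r$ is read as $2r$ from Step 2; more precisely, we use the distance-$2r$ separation so that the balls $B(Z,r)$ are disjoint. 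The bad vertices attached to $Z$ are at distance at most $r$ from $Z$. To get the ratio $(d-1)^{-r}$, we compare $N_{r}(Z)$ with the larger disjoint region. Since Step 2 in fact gives separation $>2r$, we can shrink the bad radius: re-run Steps 1--2 with bad radius $r$ and separation radius $3r$, which requires $\delta\cdot O(8r)<1$. This holds because $r=\frac1{16\delta}$ leaves room, which is presumably why the constant 16 was chosen. The shell between distance $r$ and $2r$ around $Z$ is tree-like, of size at least $(d-2)^{\Theta(r)}$ times the inner part, by Lemma~\ref{basic2} type expansion. Hence
\begin{align*}
\#\text{bad}\le \sum_Z |B(Z,r)|\le (d-1)^{-r}\sum_Z |B(Z,2r)|\le (d-1)^{-r} n.
\end{align*}

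The main obstacle is this last comparison: showing that $|B(Z,2r)|\ge (d-1)^{r}|B(Z,r)|$ up to the constant. This needs the region around $Z$ to be an exact tree hanging off the cycle, with each vertex having $d-1$ (respectively $d-2$ on the cycle) children, which Step 2 guarantees. If the $d-2$ branching on the cycle spoils the constant, I would absorb it by slightly adjusting the radii.
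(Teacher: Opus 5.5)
Your argument is essentially the paper's: \ref{prop2C} forces short cycles to be far apart, so their radius-$2r$ neighbourhoods are disjoint and tree-like, and every bad vertex lies within distance $r$ of a short cycle. The only difference is bookkeeping: the paper bounds the number of short cycles by $n/(d-1)^{1/(8\delta)}$ and then multiplies by $(d-1)^{1/(16\delta)}$, which drops the cycle-length factor $\ell$, whereas your per-cycle comparison $|B(Z,r)|\le (d-1)^{-r}|B(Z,2r)|$ accounts for that factor correctly.

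The obstacle you flag closes exactly, with three fixes:
\begin{itemize}
\item The separation needed is $>4r$, not $3r$. \ref{prop2C} gives this, because two short cycles plus a connecting path span at most $8r+1$ vertices and $\delta(8r+1)=\frac12+\delta<1$.
\item That $B(Z,2r)$ is $Z$ with trees attached does not follow from your Step~2 as stated, since the second cycle created need not be short. Instead, apply \ref{prop2C} directly to $Z$ together with two geodesics from $Z$, a set of at most $6r+1$ vertices.
\item With the tree structure in hand, $|B(Z,j)|=\ell+\ell(d-2)\sum_{i<j}(d-1)^i=\ell(d-1)^j$ for $j\le 2r$. So the $d-2$ branching on $Z$ costs nothing and no radii need adjusting.
\end{itemize}
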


This lemma appeared and proved in \cite{zbMATH08098276}, but in order to show the critical role of property \ref{prop2C}, we give here its complete proof.

\begin{proof}
     We first claim that in $G$, every two cycles of length at most $\frac{1}{4\delta}$ are at a distance of at least $\frac{1}{4\delta}$. Otherwise, let $U$ be the vertices of these two cycles, together with the vertices of a path of length at most $\frac{1}{4\delta}$ connecting them. Then, $|U|\leq 2\cdot\frac{1}{4\delta}+\frac{1}{4\delta}=\frac{3}{4\delta}$ and $e(U)\geq |U|+1$ (since $U$ is connected and contains at least two cycles). On the other hand, by \ref{prop2C}, 
     $$e(U)\leq (1+\delta)|U|\leq |U|+\delta\cdot \frac{3}{4\delta}<|U|+1.$$
     Let $m$ be the number of cycles of length at most $\frac{1}{4\delta}$ in $G$, denote them by $C_1, \dots , C_m$. For every $i \in [m]$, let $N_i$ be the set of vertices in $V(G)$ at distance at most $\frac{1}{8\delta}$ from $C_i$. We then have for every $i\neq j$, $N_i\cap N_j=\emptyset$ (as otherwise we would have two cycles of length at most $\frac{1}{4\delta}$ at distance at most $2\cdot\frac{1}{8\delta}=\frac{1}{4\delta}$). By the same reasoning, for every $i \in [m]$, $G[N_i]$ has only one cycle. We thus obtain $m\cdot (d-1)^{\frac{1}{8\delta}}\leq n$, that is, $m \leq \frac{n}{(d-1)^{\frac{1}{8\delta}}}$.

     Since $m\cdot (d-1)^{\frac{1}{16\delta}}\leq \frac{n}{(d-1)^{\frac{1}{16\delta}}}$, we conclude that there are at most $\frac{n}{(d-1)^{\frac{1}{16\delta}}}$ vertices in $G$ that are at distance at most $\frac{1}{16\delta}$ from a cycle of length at most $\frac{1}{4\delta}$. Hence, there is a set $X \subseteq V(G)$ with $|X|\geq \left(1-\frac{1}{(d-1)^{\frac{1}{16\delta}}}\right)n$ such that every $v \in X$ is at a distance of at least $\frac{1}{16\delta}$ from any cycle of length at most $\frac{1}{4\delta}$. Thus, every $v \in X$ satisfies that $B\left(v,\frac{1}{16\delta}\right)$ contains no cycles.
\end{proof}

We are now ready to show the last step in the proof of Theorem~\ref{constant}, which is showing that the largest component of $G[V_p]$, which we denote by $L_1$, is of size $(1+o_\delta(1))x\cdot n$.

The probability a vertex belongs to a component of size at least $\log^C n$ in $G[V_p]$ is stochastically dominated by the vertex version of Galton-Watson tree with offspring distribution $Bin(d,p)$, as expressed in Section \ref{intro}. Thus, by standard results (see, for example, \cite{zbMATH06976511}, Theorem 4.3.12), we yield \textbf{whp} that $|L_1|\le (1+o(1))x\cdot n$.

For the lower bound on the size of $L_1$, we use Lemma \ref{balls}.
Let $Z_1$ be the random variable counting the number of vertices in components of size at least $\frac{1}{16\delta}$ in $G_2$. By Lemma \ref{balls}, there exists a set $X \subseteq V(G)$ with $|X|\geq \left(1-\frac{1}{(d-1)^{\frac{1}{16\delta}}}\right)n$ such that for every $v \in X$, there are no cycles in $B\left(v,\frac{1}{16\delta}\right)$. Hence, by standard results, we have that for every $v\in X$, $\mathbb{P}(|C_{G_2}(v)|\geq \frac{1}{16\delta})\geq (1-o_\delta(1))x$, where $o_\delta(1)$ tends to zero as $\delta$ tends to zero. Thus $\mathbb{E}\left[|Z_1|\right]\geq (1-o_\delta(1)) x \left(1-\frac{1}{(d-1)^{\frac{1}{16\delta}}}\right)n=(1-o_\delta(1))x\cdot n$. To show that $|Z_1|$ is well concentrated around its mean, consider the vertex-exposure martingale. Every vertex can change the value of $|Z_1|$ by at most $\frac{d}{16\delta}$. Hence, by part 2 of Claim \ref{hoef}:
$$\mathbb{P}\bigg(\Big||Z_1|-\mathbb{E}[|Z_1|]\Big|\geq n^{2/3}\bigg)\leq 2\exp \left(-\frac{n^{4/3}}{2n}\cdot\frac{256\delta^2}{d^2}\right)=o(1),$$
where the last equality holds since $d=O(1)$.
Therefore, \textbf{whp}, $|Z_1|\geq (1-o_\delta(1))x\cdot n$.
Let $Z_2$ be the random variable counting the number of vertices in components of $G_2$ whose size lies in the interval $\left[\frac{1}{16\delta}, \log^C n\right]$. By the key argument, $\mathbb{E}[|Z_2|]\le o_\delta(1)x\cdot n$. Once again, let us consider the vertex-exposure martingale. Every vertex can change the value of $|Z_2|$ by at most $d\log^C n$. Hence, by part 2 of Claim \ref{hoef}:
$$\mathbb{P}\bigg(\Big||Z_2|-\mathbb{E}[|Z_2|]\Big|\geq n^{2/3}\bigg)\leq 2\exp \left(-\frac{n^{4/3}}{2n}\cdot\frac{1}{d^2\log^{2C} n}\right)=o(1),$$
where the last equality holds since $d=O(1)$. Therefore, we obtain that the number of vertices in components of size at least $\log^C n$ in $G_2$ is \textbf{whp} $(1-o_\delta(1))x\cdot n-(1+o(1))o_\delta(1)x\cdot n$.
Thus, since $\delta$ is sufficiently small, \textbf{whp} there are at least $(1-o_\delta(1))x\cdot n$ vertices in components of size at least $\log^C n$ in $G_2$. As argued above, \textbf{whp}, every component of size at least $\log^C n$ in $G[V_p]$ intersects with $W$. Thus, the number of vertices in components that intersect with $W$ is at least $(1-o_\delta(1))x\cdot n$. Moreover, all the vertices in $W$ merge into a unique component $L_1$, and hence \textbf{whp} $|L_1|\ge (1-o_\delta(1))x\cdot n$. Altogether, we have that \textbf{whp} $|L_1|=(1+o_\delta(1))x\cdot n$.

\section{Proof of Theorem~\ref{no_second}}\label{Pno_second}

The construction given here is similar in spirit to that given in \cite[Section 5.2]{diskin2024percolationisoperimetry}.

The family of $(n,d,\lambda)$-graphs features a fundamental result by Alon and Milman \cite{zbMATH03875335}. We give here a version of this result, which will be useful for us:

\begin{claim} \label{eml}
    Let $G = (V, E)$ be an $(n, d, \lambda)$-graph. Then for every $U\subseteq V(G)$,
    \begin{align*}
        e(U,U^c)\ge \frac{d-\lambda}{n}|U|(n-|U|).
    \end{align*}
\end{claim}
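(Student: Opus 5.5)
This is the Alon--Milman edge-expansion bound, and I would derive it from the spectral (Rayleigh quotient) characterization of $\lambda_2$ for the adjacency matrix $A$ of $G$. Since only the second eigenvalue is needed, the assumption $|\lambda_i|\le\lambda$ for $i\ge 2$ is more than enough: in particular $\lambda_2\le\lambda$. The cases $U=\emptyset$ and $U=V$ are trivial, since both sides vanish, so I fix $U$ with $0<|U|=u<n$.

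The plan is to plug into the quadratic form of the Laplacian $L=dI-A$ a test vector orthogonal to the all-ones vector $\mathbf 1$. The eigenvalues of $L$ are $d-\lambda_i$, and $\mathbf 1$ is the eigenvector for the eigenvalue $0$. By Courant--Fischer, every $f\perp\mathbf 1$ therefore satisfies
\[
f^{T}Lf=\sum_{xy\in E}(f(x)-f(y))^2\ \ge\ (d-\lambda_2)\|f\|^2\ \ge\ (d-\lambda)\|f\|^2.
\]

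For the test vector I take $f=\mathbf 1_U-\frac{u}{n}\mathbf 1$, that is, $f(x)=1-\frac{u}{n}$ for $x\in U$ and $f(x)=-\frac{u}{n}$ for $x\notin U$. Then $\sum_x f(x)=u-u=0$, so $f\perp\mathbf 1$. The quantity $f(x)-f(y)$ equals $\pm1$ exactly when the edge $xy$ crosses between $U$ and $U^c$, and equals $0$ otherwise. Hence $f^{T}Lf=e(U,U^c)$. Next I compute the norm:
\[
\|f\|^2=u\Bigl(1-\tfrac{u}{n}\Bigr)^2+(n-u)\tfrac{u^2}{n^2}=\frac{u(n-u)}{n}.
\]
Combining the two displays gives $e(U,U^c)\ge\frac{d-\lambda}{n}|U|(n-|U|)$, which is the claim.

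None of the steps is a real obstacle. The only points that need care are that the bound uses $\lambda_2\le\lambda$ rather than some bound on $|\lambda_2|$ by a different quantity, and that $f$ has to be centred so that it is orthogonal to $\mathbf 1$; both are routine.
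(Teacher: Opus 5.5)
Your proof is correct and complete. The paper gives no proof of this claim; it quotes it as a version of Alon and Milman's result, and your argument (Courant--Fischer for $L=dI-A$ applied to the centred indicator $\mathbf{1}_U-\frac{|U|}{n}\mathbf{1}$) is the standard derivation of that cited bound. Two small remarks. First, your argument only uses $\lambda_2\le\lambda$, as you noted. Second, the claim could not be obtained from the expander mixing lemma (Claim~\ref{original_eml}): that lemma's error term $\lambda\sqrt{|U||U^c|}$ is at least $\frac{\lambda}{n}|U||U^c|$, so it only yields a weaker lower bound.
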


The construction and its expansion properties are described in Section \ref{construction}. Section \ref{noERCP} is devoted to proving the likely existence of at least two components of size $\Omega\left(d\log n\right)$ (which violates ERCP).

\subsection{The Construction}\label{construction}

Let $\alpha>0$ be a constant and let $\epsilon>0$ be a sufficiently small constant. Let $b\le \frac{1}{500\epsilon}$. Set $c\coloneqq c(\alpha)>0$ to be sufficiently small. Let $n\in \mathbb{N}$ be sufficiently large and let $d\coloneqq d(n)\in \mathbb{N}$ such that $d$ is even, $d<n^{1/5}$, $d=\omega(1)$. Define $p=\frac{1+\epsilon}{d}$ and $k=d^2\log n$. In addition, assume that $b\ge 2$. (This is possible because if a graph satisfies \ref{prop0B} for $b\ge 2$, then it also satisfies \ref{prop0B} for any constant smaller than $2$.)

Let $H_1$ be a random $10b$-regular graph on $n$ vertices. Note that due to \cite{zbMATH05315244}, $H_1$ is \textbf{whp} an $\left(n,10b,3\sqrt{10b-1}\right)$-graph. Since $n/k=\omega(1)$ and $b$ is a constant, by \cite{zbMATH03344609}, $H_1$ admits an equitable (proper) coloring of its vertices with $n/k$ colors, i.e., we can partition $V(H_1)$ into $V(H_1)=\bigcup_{i=1}^{n/k}A_i$ so that $A_i$ is an independent set of $k$ vertices in $H_1$, for every $1\le i\le n/k$.

Let $H_2$ be a random $(d-10b)$-regular graph on $k$ vertices. Then $H_2$ is \textbf{whp} a $(k,d-10b,\tau)$-graph with $\tau=O\left(\sqrt{d-10b}\right)$ (see
\cite{zbMATH05037081,zbMATH07751065,zbMATH07036340} and the references therein for even stronger results).

We use $H_1$ and $H_2$ to construct our desired graph $G$. Consider $H_1$ and a partition of $V(H_1)$ into $\bigcup_{i=1}^{n/k}A_i$ as described above. In each $\{A_i\}_{i=1}^{\frac{n}{k}}$, embed a copy of $H_2$ in an arbitrary way. Notice that $G$ is a $d$-regular graph on $n$ vertices. Let us verify that $G$ meets the expansion properties detailed in Theorem~\ref{no_second} (Properties \ref{prop0B}-\ref{prop2B}).

\begin{claim}\label{first}
    The graph $G$ satisfies \ref{prop0B}.
\end{claim}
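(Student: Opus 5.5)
The plan is to show that the edges of $H_1$ alone already give \ref{prop0B}. Adding the copies of $H_2$ can only add edges to $G$, and cannot remove any edges of $H_1$. Since each $A_i$ is an independent set in $H_1$, the edge sets of $H_1$ and of the embedded copies of $H_2$ are disjoint. Moreover, $H_1$ is a spanning subgraph of $G$. Hence, for every $U\subseteq V(G)$,
\begin{align*}
    e_G(U,U^c)\ge e_{H_1}(U,U^c).
\end{align*}
It therefore suffices to lower bound $e_{H_1}(U,U^c)$ for all $U$ with $|U|\le n/2$.

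For this I will use that, \textbf{whp}, $H_1$ is an $\left(n,10b,3\sqrt{10b-1}\right)$-graph, as noted in the construction; we fix such an $H_1$. Applying Claim \ref{eml} with $d'=10b$ and $\lambda'=3\sqrt{10b-1}$ gives, for every $U$ with $|U|\le n/2$,
\begin{align*}
    e_{H_1}(U,U^c)\ge \frac{10b-3\sqrt{10b-1}}{n}|U|(n-|U|)\ge \frac{10b-3\sqrt{10b-1}}{2}|U|,
\end{align*}
where the second inequality uses $n-|U|\ge n/2$.

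It remains to check the numerical inequality $\frac{10b-3\sqrt{10b}}{2}\ge b$, which would give $e_G(U,U^c)\ge b|U|$. This is equivalent to $8b\ge 3\sqrt{10b}$, that is, $64b^2\ge 90b$, that is, $b\ge 90/64$. This holds because we assumed $b\ge 2$. Any smaller $b$ is covered by the same graph, as remarked in the construction. Since this step is a one-line computation, the only point needing care is the logical one: the spectral property of $H_1$ holds only \textbf{whp}. The construction merely needs existence, so we simply fix a realization of $H_1$ that has the spectral bound and also admits the equitable colouring. Both properties hold \textbf{whp} (the colouring in fact always exists since $b$ is constant), so such a realization exists. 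I do not expect any real obstacle.
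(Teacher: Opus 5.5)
Your proposal is correct and follows the paper's proof essentially verbatim. As there, you use that $H_1$ is a spanning subgraph of $G$, apply Claim~\ref{eml} to $H_1$ as an $(n,10b,3\sqrt{10b-1})$-graph together with $n-|U|\ge n/2$, and check that the resulting constant is at least $b$ for $b\ge 2$ (bounding $\sqrt{10b-1}$ by $\sqrt{10b}$ in that final check is a harmless simplification).
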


\begin{proof}[Proof of Claim \ref{first}]
    Observe that $H_1$ is a spanning subgraph of $G$, and hence if $H_1$ fulfills \ref{prop0B}, then so does $G$. By Claim \ref{eml}, for every $U\subseteq V(G)$ with $|U|\le \frac{n}{2}$, 
    \begin{align*}
        e_G(U,U^c)&\ge e_{H_1}(U,U^c)\ge \frac{10 b-3\sqrt{10 b-1}}{n}|U|(n-|U|)\\
        &\ge \left(5 b-\frac{3}{2}\sqrt{10 b-1}\right)|U|\ge b|U|.\qedhere
    \end{align*}
\end{proof}

\begin{claim} \label{second}
    The graph $G$ satisfies \ref{prop1B}.
\end{claim}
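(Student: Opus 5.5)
We need to show that for every $U\subseteq V(G)$ with $|U|\le c\,d\log n$ we have $|N_G(U)|\ge (1-\alpha)d|U|$. The plan is to use only the structure of the blocks $A_i$: inside each block the graph is a copy of $H_2$, which is a $(k,d-10b,\tau)$-graph with $\tau=O(\sqrt d)$. Across blocks, $G$ adds the $10b$ edges of $H_1$ per vertex, which we simply discard. The parts $U_i\coloneqq U\cap A_i$ are treated separately, but the neighbourhoods $N_{H_2}(U_i)\subseteq A_i$ are automatically disjoint for different $i$. Moreover, a vertex of $A_i\setminus U_i$ cannot lie in $U_j$ for $j\neq i$, since $U_j\subseteq A_j$. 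Hence
\[
|N_G(U)|\ \ge\ \sum_i |N_{H_2}(U_i)\setminus U_i| \;=\; \sum_i |N_{H_2[A_i]}(U_i)|,
\]
and it suffices to prove that every $W\subseteq A_i$ with $|W|\le c\,d\log n$ satisfies $|N_{H_2}(W)|\ge (1-\alpha)d|W|$.

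Next we bound the neighbourhood of a small set $W$ in the pseudo-random graph $H_2$. Let $Y\coloneqq N_{H_2}(W)$. The number of edges leaving $W$ is at least $(d-10b)|W|-2e_{H_2}(W)$, and all of them land in $Y$. We apply the expander mixing lemma (Claim~\ref{original_eml}, or its variant for $e(W)$) in $H_2$, a graph on $k=d^2\log n$ vertices. This gives
\[
2e(W)\le \frac{d}{k}|W|^2+\tau|W| \le \Big(\frac{c}{d}+O(d^{-1/2})\Big)d|W|=o(d|W|),
\]
using $|W|\le c\,d\log n = ck/d$. Similarly, $e(W,Y)\le \frac dk|W||Y|+\tau\sqrt{|W||Y|}$. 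Suppose $|Y|< d|W|$. Then $\frac dk|W||Y|\le \frac{d^2|W|^2}{k}\le c\,d|W|$, and $\tau\sqrt{|W||Y|}\le \tau\sqrt d\,|W|=O(d|W|)$. This last term is not small, so the spectral bound alone does not control $e(W,Y)$.

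The main obstacle is therefore obtaining $|Y|\ge(1-\alpha)d|W|$ rather than merely $\Omega(d|W|)$. Spectral bounds with $\tau=\Theta(\sqrt d)$ lose constant factors for sets as small as $|W|\approx d^{-1}k$. I would instead use a direct property of random regular graphs instead of the eigenvalue bound. Whp, every set $W$ of at most $c\,k/d$ vertices in a random $(d-10b)$-regular graph on $k$ vertices spans few edges and has almost no vertices outside $W$ with two or more neighbours in $W$. I would prove this by a first-moment computation in the configuration model (or via the switching/contiguity results cited). The expected number of pairs $(W,Z)$ with $|W|=w$, $|Z|=z$, and at least $\alpha d w/2$ "excess" edges from $W$ to $Z$ (edges beyond the first one into each vertex of $Z$) is at most
\[
\binom{k}{w}\binom{k}{z}\Big(\frac{e\,dw}{\alpha d w}\cdot\frac{dw}{k}\Big)^{\alpha d w/2}.
\]
Since $dw/k\le c$ and $c=c(\alpha)$ is small, the final factor is $\big(O(c/\alpha)\big)^{\alpha dw/2}$. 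This beats the entropy terms $\binom{k}{w}\binom{k}{z}\le \exp(O(dw\log(1/c)))$ once $c$ is small enough in terms of $\alpha$. Summing over $w$ gives $o(1)$. Here one should note that $z\le dw$ and $k=d^2\log n$, so there is room.

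Given this, the number of distinct neighbours of $W$ is at least the number of edges leaving $W$ minus the excess, that is at least $(d-10b)|W|-2e(W)-\alpha d|W|/2\ge(1-\alpha)d|W|$, using $b=O(1/\epsilon)$ constant and $d=\omega(1)$. Since there are only $n/k$ blocks and the failure probability per block can be made $o(k/n)$ by retaining the $\log n$ slack in the exponent, a union bound handles all blocks simultaneously. Alternatively, one can take the same fixed good $H_2$ in every block, which removes the need for a union bound altogether. Combining this with the first display completes the proof of \ref{prop1B}.
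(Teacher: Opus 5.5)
Your reduction is the paper's: discard the $H_1$-edges, note that the sets $N_{G[A_i]}(U_i)\subseteq A_i\setminus U_i$ are pairwise disjoint and avoid $U$, and reduce to almost-perfect vertex expansion of $H_2$ for sets of size at most $c\,k/d$. Your spectral bound $2e(W)=o(d|W|)$ is also fine. The paper then just cites a known expansion lemma for random regular graphs. You try to prove it instead, and your displayed first-moment bound does not tend to zero.

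Put $x=dw/k\le c$. Since $z\le dw$, we have $\binom{k}{z}\ge (k/z)^z\ge x^{-z}$. So for every $z\ge \alpha dw/2$ your bound is at least $x^{-\alpha dw/2}(ex/\alpha)^{\alpha dw/2}=(e/\alpha)^{\alpha dw/2}$, which is large and does not depend on $c$. Such $z$ must be included in your sum. They remain even if you restrict $Z$ to vertices receiving at least two edges from $W$, since then $z$ can equal the excess $\alpha dw/2$.

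Put differently, the entropy of choosing $Z$ has exponent about $z\log(1/x)$, while you gain only $\tfrac{\alpha}{2}dw\log(1/x)$, and shrinking $c$ scales both by the same factor. The constant hidden in your estimate $\binom{k}{w}\binom{k}{z}\le\exp(O(dw\log(1/c)))$ is about $z/(dw)$, not below $\alpha/2$. That estimate is also false for small $w$, where $k/w$ can be as large as $d^2\log n$. The root cause is that you pay about $ek/z$ in entropy for each vertex of $Z$, but charge probability only for the excess edges, never for the first edge from $W$ into each vertex of $Z$.

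To repair this, charge every edge of the witness. For each $v\in Z$, also choose its first neighbour in $W$ ($w$ ways) and pay $O(d/k)$ for that edge. Each vertex of $Z$ then costs $\tfrac{ek}{z}\cdot w\cdot O(\tfrac{d}{k})=O(\tfrac{dw}{z})$, so the total entropy becomes $\binom{k}{w}e^{O(dw)}$ with $\binom{k}{w}\le(ed/x)^w$. This is beaten by the excess factor $(O(x/\alpha))^{\Omega(\alpha dw)}$ once $c$ is small in terms of $\alpha$ and $d$ is large, which is the comparison you intended. Equivalently, expose the half-edges of $W$ one at a time, so that the already-hit vertices are determined by the history and only $W$ needs a union bound.

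Two further points. First, since $d=\omega(1)$, the configuration model is simple only with probability $e^{-\Theta(d^2)}$ and is not contiguous to the uniform model. Its failure probability from single vertices with many loops or multiple edges is of order $\exp(-\Theta(\alpha d\log(d\log n)))$. So conditioning on simplicity cannot transfer the statement once $d\gg\log\log n$. The count has to be done in the uniform model (McKay-type bounds $\mathbb{P}(F\subseteq H_2)\le(O(d/k))^{|F|}$, or switchings), unless you simply cite the lemma as the paper does.

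Second, of your two ways to handle the $n/k$ blocks, only the second is valid in general. For slowly growing $d$ the per-block failure probability is not $o(k/n)$. However, the construction places the same fixed good $H_2$ in every block, so no union bound is needed.
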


\begin{proof}[Proof of Claim \ref{second}]
    Let $U\subseteq V(G)$ be a set of vertices of size $|U|\le c\cdot d\log n$. For every $1\le i\le \frac{n}{k}$, define $U_i\coloneqq U\cap A_i$.
    Since $|U|\le c\cdot d\log n$, we have $|U_i|\le c\cdot d\log n$, for every $1\le i\le \frac{n}{k}$. 
    
    We use the fact that random $d$-regular graphs exhibit typically almost perfect vertex expansion, with expansion factor of $(1-\beta)d$, for sets of size $o\left(\frac{n}{d}\right)$ and for small constant $\beta>0$ (see, for example, Lemma 6 in \cite{zbMATH06305468}).
    By applying this fact to $G[A_i]$, which is a copy of the random $(d-10b)$-regular graph $H_2$, we obtain that $|N_{G[A_i]}(U_i)|\ge \left(1-\frac{\alpha}{2}\right)(d-10b)|U_i|\ge (1-\alpha)d|U_i|$, where the last inequality holds since $d=\omega(1)$.
    Therefore, 
    \begin{align*}
        |N_G(U)|\ge \sum_{i=1}^{n/k} |N_{G[A_i]}(U_i)|\ge (1-\alpha)d\cdot \sum_{i=1}^{n/k} |U_i| \ge (1-\alpha)d|U|.&\qedhere
    \end{align*} 
\end{proof}

Finally,

\begin{claim} \label{third}
    The graph $G$ satisfies \ref{prop2B}.
\end{claim}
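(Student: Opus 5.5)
We need to show that every $U\subseteq V(G)$ with $|U|\le \frac{\alpha}{2}d^2\log n$ satisfies $e_G(U,U^c)\ge (1-\alpha)d|U|$. Since $H_1$ is a spanning subgraph, $e_G(U,U^c)\ge e_G(U,U^c)\cap E(G\setminus H_1)$. The plan is therefore to work inside each copy $G[A_i]\cong H_2$ separately and ignore the $H_1$-edges entirely. As before, set $U_i\coloneqq U\cap A_i$. Each edge of $G[A_i]$ leaving $U_i$ inside $A_i$ also leaves $U$, so
\begin{align*}
e_G(U,U^c)\ge \sum_{i=1}^{n/k} e_{G[A_i]}(U_i,A_i\setminus U_i).
\end{align*}
It thus suffices to show $e_{G[A_i]}(U_i,A_i\setminus U_i)\ge (1-\alpha)d|U_i|$ for every $i$.

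For a fixed $i$, we have $|U_i|\le \frac{\alpha}{2}d^2\log n=\frac{\alpha}{2}k$. Here $G[A_i]$ is a $(k,d-10b,\tau)$-graph with $\tau=O(\sqrt{d-10b})=o(d)$. Apply Claim \ref{eml} to this graph with $U_i$:
\begin{align*}
e_{G[A_i]}(U_i,A_i\setminus U_i)\ge \frac{d-10b-\tau}{k}|U_i|(k-|U_i|)\ge (d-10b-\tau)\left(1-\frac{\alpha}{2}\right)|U_i|.
\end{align*}
Because $d=\omega(1)$ and $b$ is a constant, $d-10b-\tau=(1-o(1))d$. Hence the right-hand side is at least $(1-\frac{\alpha}{2}-o(1))d|U_i|\ge(1-\alpha)d|U_i|$ for $n$ large. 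Summing over $i$ gives the claim.

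The only point needing care is that $H_2$ really is whp a $(k,d-10b,\tau)$-graph with $\tau=o(d)$. The construction takes this from the cited spectral results, and we fix one such realization once and for all, so this step is not probabilistic at the level of $G$. The application of Claim \ref{eml} works because the size threshold $\frac{\alpha}{2}d^2\log n$ is exactly an $\frac{\alpha}{2}$-fraction of $|A_i|=k$, so the factor $(k-|U_i|)/k\ge 1-\frac{\alpha}{2}$ costs only half of the allowed loss and leaves room for the $10b+\tau$ error.
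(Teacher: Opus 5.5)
Your proof is correct and is the paper's argument: keep only the edges inside the copies $G[A_i]\cong H_2$, apply Claim~\ref{eml} to each $U_i=U\cap A_i$ using $|U_i|\le \frac{\alpha}{2}k$ so that $(k-|U_i|)/k\ge 1-\frac{\alpha}{2}$, and absorb the $10b+\tau=o(d)$ loss into the remaining $\frac{\alpha}{2}d|U_i|$ of slack. One notational slip: your line ``$e_G(U,U^c)\ge e_G(U,U^c)\cap E(G\setminus H_1)$'' compares a number with a set, but the displayed sum over $i$ that follows is the correct inequality.
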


\begin{proof}[Proof of Claim \ref{third}]
    Let $U\subseteq V(G)$ be a set of vertices of size $|U|\le \frac{\alpha}{2}\cdot d^2\log n$. For every $1\le i\le \frac{n}{k}$, define $U_i\coloneqq U\cap A_i$, and notice that 
    \begin{align*}
        e_G(U,U^c)\ge \sum_{i=1}^{n/k} e_{G[A_i]}(U_i,A_i\backslash U_i).
    \end{align*}
    Now, as $|U|\le \frac{\alpha}{2}\cdot d^2\log n$, we have $|U_i|\le \frac{\alpha}{2}\cdot d^2\log n$, for every $1\le i\le \frac{n}{k}$. Hence, by Claim \ref{eml}, 
    \begin{align*}
        e_{G[A_i]}(U_i,A_i\backslash U_i)\ge \frac{(d-10b)-\tau}{d^2\log n}|U_i||A_i\backslash U_i|\ge \left(1-\frac{\alpha}{2}\right)(d-10b-\tau)|U_i|\ge (1-\alpha)d|U_i|,
    \end{align*}
    where the last inequality holds since $\tau=O\left(\sqrt{d-10b}\right)$ and $d=\omega(1)$.
    Therefore,
    \begin{align*}
        e_G(U,U^c)\ge \sum_{i=1}^{n/k} e_{G[A_i]}(U_i,A_i\backslash U_i) \ge \sum_{i=1}^{n/k}(1-\alpha)d|U_i|=(1-\alpha)d|U|.&\qedhere
    \end{align*}
\end{proof}

\subsection{The absence of ERCP}\label{noERCP}

Lastly, we show that ERCP fails for the graph $G$ we constructed.

\begin{claim}\label{PnoERCP}
    \textbf{Whp}, there are at least two components of $G[V_p]$ of size $\Omega(d\log n)$.
\end{claim}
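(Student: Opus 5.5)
The plan is to exhibit, whp, many blocks $A_i$ whose own percolated giant stays isolated from the rest of $G[V_p]$; two such blocks already give the claim. The point of the construction is the following count. Inside a block, percolation behaves like supercritical site percolation on a $(k,d-10b,\tau)$-graph, so it produces a local giant of order about $2\epsilon k/d=\Theta(\epsilon d\log n)$. Such a local giant has only about $10b\cdot 2\epsilon d\log n$ external $H_1$-neighbours. Each of these is retained with probability $p$, so the expected number retained is about $20\epsilon b\log n\le \frac{1}{25}\log n$, using $b\le\frac{1}{500\epsilon}$. Hence the local giant is isolated with probability at least roughly $n^{-1/20}$. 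There are $n/k=n/(d^2\log n)\ge n^{3/5}/\log n$ blocks, so the expected number of isolated local giants tends to infinity.

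\textbf{Step 1 (local giants).} Fix a block $A_i$. Then $G[A_i]$ is a copy of $H_2$, a $(k,d-10b,\tau)$-graph with $\tau=O(\sqrt d)$, $d-10b=\omega(1)$ and $d=o(k)$. The percolation probability is $p=\frac{1+\epsilon}{d}=\frac{1+\epsilon'}{d-10b}$ with $\epsilon'=\epsilon-O(1/d)$. By Theorem~\ref{n,d,lambda} applied to $H_2$ with $\epsilon'$ (as $k\to\infty$), with probability at least $1/2$ the graph $G[A_i\cap V_p]$ has a component $L_i$ with $\epsilon d\log n\le |L_i|\le 3\epsilon d\log n$, using $x(\epsilon')k=(2\epsilon-O(\epsilon^2))d\log n$. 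Call $i$ \emph{good} if this happens. The event depends only on $V_p\cap A_i$; in particular, blocks are good independently of one another.

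\textbf{Step 2 (isolation).} Let $N$ be the number of good blocks $i$ such that no vertex of $N_{H_1}(L_i)\setminus A_i$ lies in $V_p$. If block $i$ is counted in $N$, then $L_i$ is a whole component of $G[V_p]$, since its only edges leaving $A_i$ are $H_1$-edges. Condition on $V_p\cap A_i$ with $i$ good. The set $N_{H_1}(L_i)\setminus A_i$ has at most $30b\epsilon d\log n$ vertices, and all of them lie outside $A_i$, so they are still unexposed. Hence the conditional probability of isolation is at least
\[
(1-p)^{30b\epsilon d\log n}\ge \exp\bigl(-(1+2\epsilon)30b\epsilon\log n\bigr)\ge n^{-1/15}.
\]
Therefore $\mathbb{E}[N]\ge \frac{n}{2k}n^{-1/15}\ge n^{1-2/5-1/15}/(2\log n)=\omega(1)$.

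\textbf{Step 3 (concentration).} This is the main obstacle: the isolation events of different blocks are correlated, because the neighbourhood of $L_i$ meets other blocks and their local giants. I would handle this with the vertex-exposure martingale and part~1 of Claim~\ref{hoef}, viewing $N$ as a function of the $n$ independent Bernoulli$(p)$ indicators. I would first try to show that flipping a single vertex $v\in A_j$ changes $N$ by at most $C=10b+1$: it can change whether block $j$ is counted, and it can change the isolation status only of blocks $i$ with $v\in N_{H_1}(L_i)$, of which there are at most $10b$. To make this hold cleanly I would define $L_i$ canonically, for instance as the lexicographically first largest component of $G[A_i\cap V_p]$, so that each block contributes at most $1$ to $N$. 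Claim~\ref{hoef} then gives
\[
\mathbb{P}\bigl(N\le \mathbb{E}[N]/2\bigr)\le 2\exp\Bigl(-\frac{\mathbb{E}[N]^2/4}{2C^2np+C\,\mathbb{E}[N]}\Bigr).
\]
Here $\mathbb{E}[N]^2\ge n^{2-4/5-2/15-o(1)}=n^{16/15-o(1)}$, while $np=O(n/d)$ and $\mathbb{E}[N]\le n$. So the exponent is at least of order $\min\{n^{1/15-o(1)}d,\ \mathbb{E}[N]\}=\omega(1)$, and whp $N\ge \mathbb{E}[N]/2\ge 2$. This yields at least two distinct components of $G[V_p]$, each of size at least $\epsilon d\log n=\Omega(d\log n)$.
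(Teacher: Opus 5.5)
Your proof is correct. It differs from the paper's in how it handles the dependence between blocks.

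The paper builds an auxiliary graph $\Gamma$ on the blocks, joining $A_i$ and $A_j$ when their $G$-distance is at most $2$. Since $\Delta(\Gamma)=O(b^2k)$, it can pass to an independent set of $\Omega(n/(b^2k^2))$ blocks. For these blocks the events ``$A_i$ contains a component of $G[V_p]$ of size in $[\epsilon d\log n,3\epsilon d\log n]$'' depend on the pairwise disjoint sets $A_i\cup N_{H_1}(A_i)$, so they are independent. A plain Chernoff bound then finishes, because the expected count is at least $n^{0.9}/(200b^2d^4\log^2 n)\to\infty$.

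You instead keep all $n/k$ blocks and control the correlations with the bounded-differences inequality of Claim~\ref{hoef}. Two things make this work:
\begin{itemize}
\item Your canonical choice of $L_i$ makes $L_i$ a function of $V_p\cap A_i$ alone, and each block contributes at most $1$ to $N$.
\item This gives the Lipschitz constant $10b+1$. With it, the variance term $2C^2np=O(n/d)$ is indeed dominated by $\mathbb{E}[N]^2\ge n^{16/15-o(1)}$.
\end{itemize}

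The paper's route is more elementary, since there is no Lipschitz bookkeeping. The price is discarding a factor $\Theta(k)$ of blocks, and that is exactly where it spends the hypothesis $d<n^{1/5}$. Your route keeps a larger expected count and is slightly less demanding in $d$, though that gain is irrelevant here.

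The local inputs are the same in both arguments:
\begin{itemize}
\item a local giant of order about $2\epsilon d\log n$, from the $(n,d,\lambda)$ result applied to $H_2$;
\item the isolation bound $(1-p)^{30b\epsilon d\log n}$.
\end{itemize}

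One small technicality: your $\epsilon'=\epsilon-O(1/d)$ is not a fixed constant, so Theorem~\ref{n,d,lambda} cannot be invoked verbatim. Apply it to $H_2$ at two fixed parameters, say $(1+0.9\epsilon)/(d-10b)\le p\le(1+\epsilon)/(d-10b)$. Then use the monotonicity of the largest component under the natural coupling. The paper glosses over the same point.
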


\begin{proof}
    We start by constructing an auxiliary graph $\Gamma$ whose vertex set is $\{A_i\}_{i=1}^{\frac{n}{k}}$, and $\left(A_i,A_j\right)\in E(\Gamma)$ ($i\neq j$) if their distance in $G$ is at most two, that is, if there exists a path in $G$ of length (in edges) at most $2$ that connects $A_i$ to $A_j$. Notice that the maximal degree of $\Gamma$, $\Delta(\Gamma)$, is at most $k\cdot (10b)^2$, and therefore $\Gamma$ contains an independent set, $I$, of size at least $\frac{|V(\Gamma)|}{k\cdot (10b)^2+1}\ge \frac{n}{200b^2k^2}$.
    
    Denote by $I_0=\{1\le i\le \frac{n}{k}\;|\; A_i\in I\}$. For every $i\in I_0$, we denote by $X_i$ the indicator of the event for which $G[V_p]$ contains a component $S\subseteq A_i$, of size $|S|\in [\epsilon\cdot d\log n,3\epsilon\cdot d\log n]$. Notice that since $I$ is an independent set of $\Gamma$, the random variables $\{X_i\}_{i\in I_0}$ are i.i.d.

    Let $i\in I_0$. Notice that $X_i=1$ if and only if $G[A_i\cap V_p]$ admits a connected component of size in the interval $[\epsilon\cdot d\log n,3\epsilon\cdot d\log n]$, and its neighbors outside of $A_i$ are not in $V_p$. We bound the probability that $X_i=1$ by exposing the vertices of $G[V_p]$ in two steps. We first reveal $A_i$, apply Theorem 2 from \cite{zbMATH07751060} to $G[A_i]\cong H_2$, and obtain, with probability at least $1-o(1)$, a connected component $S$ in $G[A_i\cap V_p]$ of size in the interval $[\epsilon\cdot d\log n,3\epsilon\cdot d\log n]$. In order to establish that $S$ is a connected component in the whole $G[V_p]$, the neighborhood of $S$ outside of $A_i$ must fall outside of the random subset $V_p$. We now expose $V(G)\backslash A_i$ (in particular, we expose the neighborhood $N_{G[V(G)\backslash A_i]}(S)$). Hence,
    
    \begin{align*}
        \mathbb{P}(X_i=1)&\ge (1-o(1))(1-p)^{\left|N_{G[V(G)\backslash A_i]}(S)\right|}\ge (1-o(1))(1-p)^{10b\cdot 3\epsilon\cdot d\log n}\\
        &\ge (1-o(1))\exp\left(-\left(\frac{1+\epsilon}{d}+\left(\frac{1+\epsilon}{d}\right)^2\right)10b\cdot 3\epsilon \cdot d\log n\right)\\
        &\ge \exp\left(-50b\epsilon\cdot \log n\right)\ge n^{-50b\epsilon}\ge n^{-0.1},
    \end{align*}
    where the second inequality yields by the fact that
    \begin{align*}
        |N_{G[V(G)\backslash A_i]}(S)|=|N_{H_1}(S)|\le 10b|S|\le 10b\cdot 3\epsilon \cdot d\log n,
    \end{align*}
    the third inequality holds since $1-x\ge \exp\left(-x-x^2\right)$ for $0<x<1$, and the last inequality holds because of the assumption $b< \frac{1}{500\epsilon}$.
    
    Let $X=\sum_{i\in I_0}X_i$. Then,
    
    \begin{align*}
        \mathbb{E}[X]\ge |I_0|\cdot n^{-0.1}\ge \frac{n}{200b^2k^2}\cdot n^{-0.1}=\frac{n^{0.9}}{200b^2d^4\log^2 n}.
    \end{align*}
    
    Now, since $\{X_i\}_{i\in I_0}$ are i.i.d. random variables, we find that \textbf{whp} $G[V_p]$ contains $\Omega\left(\frac{n^{0.9}}{d^4\log^2 n}\right)$ connected components $C$ such that $C\in [\epsilon\cdot d\log n,3\epsilon\cdot d\log n]$. In particular, since $d<n^{1/5}$, \textbf{whp} $X\ge 2$, as desired.
\end{proof}

\section{Concluding Remarks}\label{discussion}

We have proven that in the site percolation model on $d$-regular graphs, for both constant-degree and growing-degree cases, we obtain the ERCP under a very mild assumption on the global edge expansion property, supported by a good control over the expansion of sets of size up to $\text{poly}(d,\log n)$. 

Our result recovers and improves upon the previously known results for random $d$-regular graphs and for the hypercube $Q^d$, and thereby, resolves the questions concerning them in full. To cover pseudo-random graphs as well, we gave a concrete statement which shows that the ERCP holds in full for $(n,d,\lambda)$-graphs, under the standard assumption that the spectral ratio $\lambda/d$ is constantly small. In fact, our result provides a full answer to the open problem for $(n,d,\lambda)$-graphs. Another perhaps surprising fact revealed in this research is a fundamental difference between site- and bond-percolation --- we showed that under the assumptions that ensure the bond-percolation analogous ERCP according to \cite{diskin2025componentslargesmalli}, and even under stronger assumptions, the ERCP is not guaranteed in site percolation setting.

We have also discussed the limitations of the proof of Theorem~\ref{general} (we have to assume that $d<n^{\gamma}$ for some $\gamma>0$) and the tightness of Theorem~\ref{general} (in Theorem~\ref{no_second}) --- we provide a construction demonstrating that Theorem~\ref{general} is tight in some sense. Nevertheless, there is still a gap between the assumptions in the two theorems. We tend to believe that the assumption $d<n^{\gamma}$ is just a technical artifact of the proof and that the correct dependence on $d$ and $\log n$ is as stated in the following conjecture:

\begin{conjecture}
    For every constants $0<\alpha<1$, $c_1,c_2>0$, and for every sufficiently small constant $\epsilon>0$, there exist $C_1\coloneqq C_1(\epsilon, \alpha, c_1,c_2)>0$ and $C_2\coloneqq C_2(\epsilon, \alpha, c_1,c_2)>0$ such that the following holds. Let $n,d\coloneqq d(n)\in \mathbb{N}$ be sufficiently large integers such that $d=o(n)$. Let $G=(V,E)$ be a $d$-regular graph on $n$ vertices satisfying:
    \begin{enumerate}[\arabic*{}.]
        \item For every $U\subseteq V(G)$ with $|U|\leq \frac{n}{2}$, $e_G(U,U^c)\geq c_1|U|$;
        \item For every $U\subseteq V(G)$ with $|U|\leq C_1\log n$, $|N_G(U)|\ge c_2d|U|$;
        \item For every $U\subseteq V(G)$ with $|U|\leq C_2d^2\log n$, $e_G(U,U^c)\ge \left(1-\alpha\right)d|U|$.
    \end{enumerate}
    Then, $G$ satisfies the ERCP.
\end{conjecture}

\bibliographystyle{abbrv}
\bibliography{Bib}

\end{document}